\newtheorem{mainthm}{Theorem}
\numberwithin{equation}{section}
\theoremstyle{plain}
\newtheorem{theorem}{Theorem}[section]
\newtheorem{lemma}{Lemma}[section]
\newtheorem{corollary}{Corollary}[section]
\newtheorem{remark}{Remark}[section]
\begin{document}
\title{On the new weighted geometric inequalities near the sphere in space forms}

\author{Weimin Sheng}
\address{Weimin Sheng: School of Mathematical Sciences, Zhejiang University, Hangzhou 310027, China.}
\email{shengweimin@zju.edu.cn}

\author{Yinhang Wang}
\address{Yinhang Wang: School of Mathematical Sciences, Zhejiang University, Hangzhou 310027, China.}
\email{wangyinhang@zju.edu.cn}

\subjclass[2020]{52A40, 53C42}
\keywords{Weighted geometric inequality, Stability, Nearly spherical sets, Space forms}

\begin{abstract}
In this paper, we first investigate weighted Minkowski type inequalities for nearly spherical sets in space forms, focusing on the sets that are $C^1$-close to geodesic spheres. Our results generalize the work of \cite{G22} by incorporating broader geometric settings and convex weight functions. Additionally, we establish quantitative stability estimates for weighted Alexandrov-Fenchel type inequalities in $\mathbb{R}^{n+1}$ and $\mathbb{H}^{n+1}$, extending the earlier results of \cite{VW24} and \cite{ZZ23}. These inequalities hold for nearly spherical sets that are $W^{2,\infty}$-close to geodesic spheres coupled with general convex weights.
\end{abstract}

\maketitle
%%%%%%%%%%%%%%%%%%%%%%%%%%%%
%%%%%%%%%%%%%%%%%%

\baselineskip16pt
\parskip3pt

\section{Introduction}
The classical Alexandrov-Fenchel inequalities state that for a convex body $\Omega\subset\mathbb{R}^{n+1}$ with smooth boundary $M=\partial\Omega$,
\begin{equation}\label{AF ineq}
	\int_{M}\sigma_{k}d\mu\geq\binom{n}{k}\omega_n\left(\frac{\int_{M}\sigma_{\ell}d\mu}{\binom{n}{\ell}\omega_n}\right)^{\frac{n-k}{n-\ell}}, \quad 0\leq\ell<k\leq n,
\end{equation}
where $\sigma_k$ is the $k$th mean curvature of $M$ and $\omega_n$ is the area of the unit sphere $\mathbb{S}^n$. Equality holds in (\ref{AF ineq}) if and only if $M$ is a sphere. The Alexandrov-Fenchel inequalities (\ref{AF ineq}) have been generalized to non-convex domains, including the star-shaped and $k$-convex domains, through the work of Guan-Li \cite{GL09} by using a inverse curvature flow. Further extensions and related results can be found in \cite{CW13,CW14,Q15}. 

Recently, the study of weighted geometric inequalities has attracted a lot of attentions.
Kwong-Miao \cite{KM14} first obtained the inequality
\begin{equation}\label{KM ineq}
	\int_{M}\Phi Hd\mu \geq \frac{n(n+1)}{2}\operatorname{Vol}(\Omega),
\end{equation}
for star-shaped and mean convex hypersurface $M\subset\mathbb{R}^{n+1}$ enclosing a bounded domain $\Omega$, where $\Phi=\frac{1}{2}r^2$. Equality holds in (\ref{KM ineq}) if and only if $M$ is a round sphere centered at the origin. Later, the inequality (\ref{KM ineq}) has been generalized to higher order mean curvatures in \cite{GR20,KM15,WZ23}. Wu \cite{W24} extended it to a broader weighted class $\Phi^{\alpha}$ $(\alpha\geq1)$ via normalized inverse curvature flow. Kwong-Wei \cite{KW24} further generalized it to arbitrary non-decreasing convex $C^2$ positive weight function $g(\Phi)$, and they proved for any smooth closed star-shaped and $k$-convex hypersurface $M\subset\mathbb{R}^{n+1}$,
\begin{equation}\label{KW gphi ineq}
	\begin{aligned}
		\int_M g(\Phi)\sigma_{k}d\mu \geq \binom{n}{k}\omega_n g\left(\frac{1}{2}\left(\frac{\int_{M}\sigma_{\ell}d\mu}{\binom{n}{\ell}\omega_n}\right)^{\frac{2}{n-\ell}}\right)\left(\frac{\int_{M}\sigma_{\ell}d\mu}{\binom{n}{\ell}\omega_n}\right)^{\frac{n-k}{n-\ell}}, \quad 0\leq \ell < k \leq n.
	\end{aligned}
\end{equation}
When $g(\cdot)\equiv1$, inequalities (\ref{KW gphi ineq}) reduce to the classical Alexandrov-Fenchel inequalities (\ref{AF ineq}).

To state the weighted Alexandrov-Fenchel type inequalities in general space forms $N^{n+1}(K)$ with constant sectional curvature $K=-1,0$ or $1$, we consider the space forms as the warped product manifolds $N^{n+1}(K)=I \times \mathbb{S}^n$ equipped with the metric
$$
\bar{g}=d r^2+\phi^2(r) g_{\mathbb{S}^n},
$$
where $g_{\mathbb{S}^n}$ is the standard round metric of $\mathbb{S}^n$. More specifically, $N^{n+1}(0)$ is the Euclidean space $\mathbb{R}^{n+1}$ if $\phi(r)=r$, $r\in[0,\infty)$; $N^{n+1}(-1)$ is the hyperbolic space $\mathbb{H}^{n+1}$ if $\phi(r)=\sinh r$, $r\in[0,\infty)$; and $N^{n+1}(1)$ is the sphere $\mathbb{S}^{n+1}$ if $\phi(r)=\sin r$, $r\in[0,\pi)$. We define
\begin{equation}
	\Phi(r)=\int_{0}^{r}\phi(s)ds=\left\{\begin{array}{lll}
		\cosh r-1, & K=-1, \\
		\frac{1}{2} r^2, & K=0, \\
		1-\cos r, & K=1.
	\end{array}\right.
\end{equation}
It is well-known that the vector field $V=\bar{\nabla}\Phi=\phi(r)\partial_r$ on $N^{n+1}(K)$ is a conformal Killing field, satisfying $\bar{\nabla}(\phi(r)\partial_r)=\phi'(r)\bar{g}$.

For a smooth convex body $\Omega\subset N^{n+1}(K)$ with boundary $M=\partial\Omega$, the $k$th quermassintegral $W_{k}$ of $\Omega$ can be expressed as follows (see \cite{S06}):
$$
\begin{aligned}
	& W_{-1}(\Omega)=\operatorname{Vol}(\Omega), \quad W_0(\Omega)=|\partial\Omega|, \\
	& W_1(\Omega)=\int_{M} \sigma_1 d \mu + Kn\operatorname{Vol}(\Omega), \\
	& W_k(\Omega)=\int_{M} \sigma_k d \mu + K\frac{n-k+1}{k-1}W_{k-2}(\Omega), \quad k=2, \cdots, n,
\end{aligned}
$$
where $\sigma_k=\sigma_k(\kappa)$ is the $k$th elementary symmetric polynomial of the principal curvatures $\kappa=\left(\kappa_1, \cdots, \kappa_{n}\right)$ of $M$. 

In hyperbolic space $\mathbb{H}^{n+1}$, due to the close relationship with the quasilocal mass and the Riemannian Penrose inequality (see for instance \cite{BHW16}), the weighted Alexandrov-Fenchel type inequalities have been studied extensively. For h-convex hypersurfaces (i.e. $\kappa_i \geq 1$ for all principal curvatures $\kappa_i$), Hu-Li-Wei \cite{HLW22} established the following inequalities comparing weighted curvature integrals to quermassintegrals,
\begin{equation}\label{HLW ineq}
	\int_M \phi^{\prime}(r) \sigma_k d\mu \geq \eta_{k, \ell}\left(W_\ell(\Omega)\right), \quad -1\leq \ell <k\leq n,
\end{equation}
where $\eta_{k, \ell}$ is the unique function such that equality holds in (\ref{HLW ineq}) for geodesic spheres, and equality holds if and only if $M$ is a geodesic sphere centered at the origin. This result generalizes earlier work by Ge-Wang-Wu \cite{GWW15}, who proved the case for $k$ being odd and $\ell=0$. As an application, they proved a positive mass theorem for the Gauss-Bonnet-Chern mass of asymptotically hyperbolic graphs. For $k=1$ and $\ell=0$, the corresponding weighted Minkowski inequality was established in \cite{DG16} for star-shaped and mean convex hypersurfaces. See also \cite{BGL} for related results. Wei-Zhou \cite{WZ23} subsequently proved the following improved weighted geometric inequalities for h-convex hypersurfaces,
\begin{equation}\label{WZ ineq}
	\int_M \Phi \sigma_k d \mu \geq \xi_{k, \ell}\left(W_\ell(\Omega)\right), \quad -1\leq \ell < k \leq n,
\end{equation}
where $\xi_{k, \ell}$ is the unique function such that equality holds in (\ref{WZ ineq}) for geodesic spheres, and equality holds if and only if $M$ is a geodesic sphere centered at the origin. In this direction, we refer to \cite{KW23}.

In sphere $\mathbb{S}^{n+1}$, there are less results concerning weighted geometric inequalities. Applying Guan-Li's  mean curvature type flow \cite{GL15}, Wei-Zhou \cite{WZ23} proved for convex hypersurfaces,
\begin{equation}\label{sphere ineq}
	\int_{M}\Phi Hd\mu \geq \psi (\operatorname{Vol}(\Omega)),
\end{equation}
where $\psi$ is the unique function such that equality holds in (\ref{sphere ineq}) for geodesic spheres, and equality holds if and only if $M$ is a geodesic sphere centered at the origin. For other related results, see \cite{GP17,KWW22} and references therein.

Based on the aforementioned works, a natural question is whether there exists a family of Alexandrov-Fenchel type inequalities involving more general weight function in space forms $N^{n+1}(K)$. In this paper, we focus on the validity and the stability of various weighted geometric inequalities for nearly spherical sets in space forms $N^{n+1}(K)$.

Let $\Omega\subset N^{n+1}(K)$ be a star-shaped domain with respect to the origin $O$. Denote $M=\partial\Omega$, and let $u:\mathbb{S}^{n}\rightarrow(-1,+\infty)$ be a function such that $M$ can be parametrized as $M=\{(\rho(1+u(x)),x):x\in \mathbb{S}^n\}$, where $\rho>0$ is a constant. When $u=0$, it reduces to the geodesic ball of radius $\rho$ centered at $O$, denoted by $\bar{B}_{\rho}$. The barycenter of $\Omega$, denoted by $\operatorname{bar}(\Omega)$, is defined as the minimizer $p \in N^{n+1}(K)$ of the function (see for instance in \cite{BDS15,BDF17,ZZ23})
$$
p \mapsto \int_{\Omega} \mathrm{d}_K^2(y, p) d v_K(y),
$$
where $\mathrm{d}_K(y, p)$ is the geodesic distance between $y \in \Omega$ and $p$, and $d v_K(y)$ is the volume measure of $N^{n+1}(K)$.

Motivated by the analysis in \cite{F89}, where a similar approach was applied to the isoperimetric inequality, Glaudo \cite{G22} investigated the validity and the stability of Minkowski type inequalities for nearly spherical sets in $\mathbb{R}^{n+1}$ with $\rho=1$. Specifically, for a domain $\Omega\subset\mathbb{R}^{n+1}$ with nearly spherical boundary $\partial\Omega=M=\{(1+u(x),x):x\in \mathbb{S}^n\}$, where $\|u\|_{C^1}\ll1$, $\operatorname{Vol}(\Omega)=\operatorname{Vol}(\bar{B}_{1})$ and $\operatorname{bar}(\Omega)=O$, they established the following volumetric Minkowski inequality
\begin{equation}\label{Glaudo ineq}
	\int_{M}H^{+}d\mu \geq n\omega_n \left(\frac{(n+1)\operatorname{Vol}(\Omega)}{\omega_n}\right)^{\frac{n-1}{n+1}},
\end{equation}
where $H^+ = \max \{H,0\}$ denotes the positive part of the mean curvature. The first goal of this paper is to generalize this result to space forms $N^{n+1}(K)$ with a general convex weight.

\begin{theorem}\label{gH volume thm}
	Given $n \geq 3$. Let $\Omega\subset N^{n+1}(K)$ $(K=-1,0,1)$ be a domain with boundary $M=\{(\rho(1+u(x)),x):x\in\mathbb{S}^n\}$, where $\operatorname{Vol}(\Omega)=\operatorname{Vol}(\bar{B}_{\rho})$ and $\operatorname{bar}(\Omega)=O$. Let $g$ be a non-decreasing convex $C^3$ positive function. Then there exists $\varepsilon>0$ such that if  $\|u\|_{C^1}<\varepsilon$, it holds 
	\begin{equation}\label{weight gH volume ineq}
		\int_M g(\Phi) H^+ d \mu \geq \chi \left(\operatorname{Vol}(\Omega)\right),
	\end{equation}
	where $H^+ = \max (H,0)$, and $\chi$ is the unique function such that equality holds in (\ref{weight gH volume ineq}) when $\Omega$ is a geodesic ball. Equality holds in (\ref{weight gH volume ineq}) if and only if $\Omega$ is a geodesic ball centered at the origin.
	
\end{theorem}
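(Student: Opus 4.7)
The plan is to adapt the Fuglede-Glaudo nearly-spherical strategy to warped-product coordinates in $N^{n+1}(K)$ and to the convex weight $g(\Phi)$. Parametrize $M$ as the radial graph $r = \rho(1+u(x))$ over $\mathbb{S}^n$ and introduce the deficit functional
\begin{equation*}
\mathcal{F}(u) := \int_M g(\Phi)\, H\, d\mu - \chi\bigl(\operatorname{Vol}(\Omega_u)\bigr), \qquad \mathcal{F}(0) = 0.
\end{equation*}
Because $H^+ \geq H$ and $g(\Phi)$ is positive, proving $\mathcal{F}(u) \geq 0$ implies (\ref{weight gH volume ineq}) at once; the $H^+$ cutoff in the statement accommodates the fact that under only $\|u\|_{C^1} < \varepsilon$ smallness, $H$ can be locally negative from uncontrolled second derivatives of $u$, but its negative part does not obstruct the lower bound.

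Next I would Taylor expand $\mathcal{F}$ around $u = 0$ using the warped-product formulas for $d\mu$ and $H$ in $u$, $\nabla u$, $\nabla^2 u$. The first variation takes the form $d\mathcal{F}(0)[u] = \alpha\int_{\mathbb{S}^n} u\, d\sigma$: the constant $\alpha$ is computed by differentiating the defining relation for $\chi$ along radial dilations of $\bar B_\rho$, and the Minkowski identity $\int_M H\langle V,\nu\rangle\, d\mu = n\int_M \phi'(r)\, d\mu$ for $V = \phi(r)\partial_r$ is used to match terms, so that $d\mathcal{F}(0)$ vanishes on $\{u:\int u\, d\sigma = 0\}$, which is precisely the linearization of the volume constraint $\operatorname{Vol}(\Omega_u) = \operatorname{Vol}(\bar B_\rho)$. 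For the Hessian, the spherical-harmonic decomposition $u = \sum_{k\ge 0} u_k$ reduces $d^2\mathcal{F}(0)[u,u]$ to $\sum_k c_k\|u_k\|_{L^2(\mathbb{S}^n)}^2$, with coefficients $c_k$ expressible in $\rho$, $K$, $g(\Phi(\rho))$, $g'(\Phi(\rho))$, $g''(\Phi(\rho))$ and the Laplace eigenvalues $k(k+n-1)$. The mode $c_0$ is removed by the volume constraint and $c_1$ by the barycenter constraint $\operatorname{bar}(\Omega_u)=O$ (which linearizes to $\int u\, x\, d\sigma = 0$); non-decreasing convexity of $g$ together with the warped-product identities satisfied by $\phi$ and $\phi'$ should force $c_k > 0$ for every $k \geq 2$ in each $K \in \{-1,0,1\}$, giving coercivity $d^2\mathcal{F}(0)[u,u] \geq c\,\|u\|_{H^1(\mathbb{S}^n)}^2$ on the admissible subspace.

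The main obstacle is the regularity gap: only $\|u\|_{C^1}$ is assumed small, so $\|u\|_{C^2}$ is not controlled and the naive cubic remainder of $\mathcal{F}$ contains terms like $\|\nabla^2 u\|_{L^\infty}\,\|u\|_{H^1}^2$ that do not vanish as $\|u\|_{C^1} \to 0$. Handling this would require rewriting $\int_M g(\Phi) H\, d\mu$ via integration by parts on $\mathbb{S}^n$ so that all contributions involving $\nabla^2 u$ appear in divergence form and integrate cleanly, reducing the deficit to a functional of $u$ and $\nabla u$ alone with a genuine $O(\|u\|_{C^1}\,\|u\|_{H^1}^2)$ remainder. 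Combined with the Hessian coercivity from the previous step, this yields $\mathcal{F}(u) \geq \tfrac{c}{2}\|u\|_{H^1}^2 \geq 0$ once $\varepsilon$ is small enough, with equality forcing $u \equiv 0$ and hence $\Omega = \bar B_\rho$.
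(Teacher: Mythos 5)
Your plan has a fatal flaw at the very first step. You propose to prove $\mathcal{F}(u) = \int_M g(\Phi) H\, d\mu - \chi(\operatorname{Vol}(\Omega_u)) \geq 0$ and then observe that $H^+ \geq H$ makes (\ref{weight gH volume ineq}) follow trivially. But this stronger inequality with $H$ in place of $H^+$ is simply \emph{false} under the $C^1$-smallness hypothesis: the Glaudo counterexample quoted in the paper (Theorem C) produces domains with $\|u\|_{C^1}$ arbitrarily small and $\int_M H\, d\mu < -1$, while the right-hand side of (\ref{weight gH volume ineq}) is positive. Translating and rescaling to meet the barycenter and volume constraints does not change the sign. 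So your deficit $\mathcal{F}$ is not bounded below by zero, and no Taylor-expansion-plus-coercivity argument around $u=0$ can establish it. The replacement of $H$ by $H^+$ is not a cosmetic accommodation of a possibly negative mean curvature; it is the essential mechanism that makes the theorem true.

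The corresponding technical gap is in your handling of the cubic remainder. You correctly identify that under $\|u\|_{C^1} < \varepsilon$ the term $\int_{\mathbb{S}^n} \frac{\phi^{n-2}g(\Phi)\rho^3}{D^2}\nabla^2 u[\nabla u,\nabla u]\,dA$ is uncontrolled, and you suggest that integration by parts alone can reduce the deficit to a functional of $u$, $\nabla u$ with an $O(\varepsilon)\|u\|_{H^1}^2$ remainder. That cannot work, precisely because of the counterexample above. What the paper actually does is: integrate by parts to rewrite the cubic term in terms of $\operatorname{div}(\frac{\phi}{D}\nabla u)$, and then observe via the formula for $H$ that this divergence is bounded above by a constant plus a multiple of $H^- = -\min(H,0)$. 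Adding $\int_M g(\Phi) H^-\, d\mu$ to both sides converts $\int g(\Phi) H$ into $\int g(\Phi) H^+$ and \emph{absorbs} the otherwise uncontrollable part of the cubic term — see (3.4), (3.5), and (\ref{estimate}) in the paper. This step cannot be replaced by algebraic rearrangement; it uses the sign of $H^-$ in an essential way. The paper also still needs a frequency splitting $u = u_1 + u_2$ (low/high frequency, not just a mode-by-mode diagonalization), because on the finite-dimensional low-frequency piece one has $\|\nabla^2 u_1\|_\infty \lesssim \|\nabla u_1\|_\infty = O(\varepsilon)$, which is what lets the remaining $\nabla^2$ terms on $u_1$ be controlled. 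Your proposed diagonalization of the Hessian by spherical harmonics and positivity of the $c_k$ is consistent with the easy part of the analysis, but by itself it does not close the argument.
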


\begin{remark}
	Notice that if $\Omega$ is $C^1$-close to a geodesic ball, then the boundary $M$ may have unbounded curvatures and may not be convex or mean convex. Under mean convex assumption, the weighted inequality (\ref{weight gH volume ineq}) specializes to several cases:
	\begin{itemize}
		\item[(1)] {For Euclidean space} ($K=0$):
		\begin{itemize}
			\item[(i)] when $g\equiv1$, it recovers the volumetric Minkowski inequality (\ref{Glaudo ineq});
			\item[(ii)] when $g(s)=s$, it corresponds to Kwong-Miao's inequality (\ref{KM ineq}).
		\end{itemize}
		
		\item[(2)] {For hyperbolic space} ($K=-1$):
		\begin{itemize}
			\item[(i)] when $g(s)=1+s$, it's the special case of $k=1$ and $\ell=-1$ in Hu-Li-Wei's inequality (\ref{HLW ineq});
			\item[(ii)] when $g(s)=s$, it's the special case of $k=1$ and $\ell=-1$ in Wei-Zhou's inequality (\ref{WZ ineq}).
		\end{itemize}
		
		\item[(3)] {For spherical case} ($K=1$):
		\begin{itemize}
			\item[(i)] when $g(s)=s$, it reduces to the inequality (\ref{sphere ineq}).
		\end{itemize}
	\end{itemize}
\end{remark}

In \cite{SX19}, Scheuer-Xia deduced the following new Minkowski type inequality in hyperbolic space.
\begin{mainthm}[\cite{SX19}]
	Let $\Omega$ be a star-shaped and mean convex domain with smooth boundary $M$ in $\mathbb{H}^{n+1}$. Then there holds
	\begin{equation}\label{SX2}
		\int_M \phi^{\prime} H d \mu \geq n(n+1) \int_{\Omega} \phi^{\prime} d v+n\omega_n^{\frac{2}{n+1}}\left((n+1) \int_{\Omega} \phi^{\prime} d v\right)^{\frac{n-1}{n+1}}.
	\end{equation}
   Equality holds in (\ref{SX2}) if and only if $\Omega$ is a geodesic ball centered at the origin.
\end{mainthm}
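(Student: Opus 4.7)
The strategy is to apply a locally constrained inverse curvature flow in $\mathbb{H}^{n+1}$, along the lines pioneered by Brendle--Hung--Wang and Guan--Li. I would consider the flow
$$
\partial_t X = \Big(\frac{n\phi'}{H} - u\Big)\nu, \qquad u := \langle V,\nu\rangle, \quad V=\phi(r)\partial_r.
$$
The Minkowski-type identity $\int_{M_t}(n\phi' - uH)\, d\mu = 0$ in $\mathbb{H}^{n+1}$ shows that $|M_t|$ is preserved along the flow, while a Heintze--Karcher-type argument of Brendle gives that the modified volume $\mathcal V(t) := \int_{\Omega_t}\phi'\, dv$ is monotonically non-decreasing. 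By the standard theory (Gerhardt--Urbas, Brendle, Guan--Li), a star-shaped mean-convex initial hypersurface stays so and converges smoothly to a geodesic sphere $S_{r_\infty}$ centered at the origin as $t\to\infty$.

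The crux is then to show that the functional
$$
\Psi(t) := \int_{M_t}\phi' H\, d\mu \;-\; n(n+1)\mathcal V(t) \;-\; n\omega_n^{2/(n+1)}\big((n+1)\mathcal V(t)\big)^{(n-1)/(n+1)}
$$
is non-increasing along the flow. Using the evolution equations for $H$ and for the area element in $\mathbb{H}^{n+1}$, one expresses $\Psi'(t)$ as an integral of a quadratic form in $H$, $u$, and $\phi'$; a Cauchy--Schwarz / Newton--MacLaurin argument (invoking $\sigma_2 \leq \tfrac{n-1}{2n}H^2$), together with the weighted isoperimetric inequality that bounds $\int_{M_t} \phi'\, d\mu$ below by a power of $\mathcal V(t)$, should yield that this expression has the right sign. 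A direct computation on a geodesic sphere $S_r$ using $H = n\phi'/\phi$, $u = \phi$, and the hyperbolic identity $(\phi')^2 = 1+\phi^2$ then verifies $\Psi(\infty) = 0$, so $\Psi(0) \geq \Psi(\infty) = 0$, which is exactly (\ref{SX2}). Rigidity follows by tracing the equality case through the Heintze--Karcher and Newton--MacLaurin inequalities, forcing $M$ to be umbilical and hence a geodesic sphere centered at $O$.

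The main obstacle is establishing the monotonicity $\Psi'(t) \leq 0$: the three terms defining $\Psi$ evolve independently, and the cross-cancellations required depend sharply on both the area conservation and the quantitative growth rate of $\mathcal V$ supplied by Brendle's Heintze--Karcher inequality. A potentially cleaner alternative would be to work with the pure inverse mean curvature flow $\partial_t X = \nu/H$ and invoke Brendle's monotone quantity from the proof of the Minkowski inequality in $\mathbb{H}^{n+1}$, combined with a careful analysis of the $t\to\infty$ asymptotics of $M_t$ to identify the limit of that quantity with the right-hand side of (\ref{SX2}).
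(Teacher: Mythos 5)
The paper does not prove this theorem: it quotes it verbatim from Scheuer--Xia \cite{SX19}, so there is no in-paper argument to compare against. Assessing your proposal on its own terms, the framework is the right one — the locally constrained inverse mean curvature flow $\partial_t X=(n\phi'/H-u)\nu$ introduced in \cite{SX19}, with area-preservation following from the Minkowski identity $\int_M(n\phi'-uH)\,d\mu=0$ — but there are genuine gaps.

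First, the assertion that $\mathcal V(t)=\int_{\Omega_t}\phi'\,dv$ is non-decreasing ``by Brendle's Heintze--Karcher'' is not justified. Under this flow
\begin{equation*}
\mathcal V'(t)=\int_M\phi'\Bigl(\tfrac{n\phi'}{H}-u\Bigr)d\mu=\int_M\frac{n(\phi')^2}{H}\,d\mu-\int_M u\phi'\,d\mu,
\end{equation*}
whereas Brendle's inequality gives $\int_M n\phi'/H\,d\mu\ge\int_M u\,d\mu=(n+1)\int_\Omega\phi'\,dv$, i.e. monotonicity of $\operatorname{Vol}(\Omega_t)$, not of $\mathcal V(t)$. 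A weighted Heintze--Karcher with weight $(\phi')^2$ does not come for free, since $\bar\nabla^2\bigl((\phi')^2\bigr)=2(\phi')^2\bar g+2\phi^2\,dr\otimes dr$ is not proportional to $\bar g$; this needs its own argument. Second, the decisive monotonicity of $\Psi$ is deferred to ``a Cauchy--Schwarz/Newton--MacLaurin argument should yield the right sign.'' Computing directly, $\frac{d}{dt}\int_{M_t}\phi' H\,d\mu=2\int_M\bigl(\tfrac{n\phi'}{H}-u\bigr)(uH+\phi'\sigma_2)\,d\mu$ in $\mathbb H^{n+1}$, and establishing the sign of the full expression $\Psi'(t)$ (including the fractional power of $\mathcal V$) is precisely where the theorem lives; it cannot be waved through. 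A cleaner organization, closer to \cite{SX19}, is to show separately that $Q(t):=\int_{M_t}\phi' H\,d\mu-n(n+1)\mathcal V(t)$ is non-increasing and that $\mathcal V(t)$ is non-decreasing; then $Q(0)\ge Q(\infty)=n\omega_n\sinh^{n-1}r_\infty=n\omega_n^{2/(n+1)}\bigl((n+1)\mathcal V(\infty)\bigr)^{(n-1)/(n+1)}\ge n\omega_n^{2/(n+1)}\bigl((n+1)\mathcal V(0)\bigr)^{(n-1)/(n+1)}$, using monotonicity of $s\mapsto s^{(n-1)/(n+1)}$, which avoids differentiating the fractional-power term at all. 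Finally, long-time existence, preservation of star-shapedness and mean convexity, and smooth convergence to a geodesic sphere for this locally constrained flow with merely star-shaped and mean-convex initial data require dedicated a priori estimates and are not supplied by the Gerhardt--Urbas theory for the pure flow $\partial_t X=\nu/F$.
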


The second goal of this paper is to establish the inequality (\ref{SX2}) with general convex weights for nearly spherical domains in $\mathbb{H}^{n+1}$.

\begin{theorem}\label{gH weight volume thm}
	Given $n \geq 3$. Let $\Omega\subset \mathbb{H}^{n+1}$ be a domain with boundary $M=\{(\rho(1+u(x)),x):x\in\mathbb{S}^n\}$, where $\int_{\Omega}\phi' dv = \int_{\bar{B}_{\rho}}\phi' dv$ and $\operatorname{bar}(\Omega)=O$. Let $g$ be a non-decreasing convex $C^3$ positive function which satisfies $(1+s)g'(s)-g(s)\geq 0$. Then there exists $\varepsilon>0$ such that if $\|u\|_{C^1}<\varepsilon$, it holds
	\begin{equation}\label{gH weight volume ineq}
		\begin{aligned}
			\int_M g(\Phi) H^+ d \mu \geq &n \omega_n g\left(\left(1+\left(\frac{(n+1)\int_{\Omega}\phi'dv}{w_n}\right)^{\frac{2}{n+1}}\right)^{\frac{1}{2}}-1\right) \\& \times \left(\left(\frac{(n+1)\int_{\Omega}\phi'dv}{w_n}\right)^{\frac{2(n-1)}{n+1}} + \left(\frac{(n+1)\int_{\Omega}\phi'dv}{w_n}\right)^{\frac{2n}{n+1}}\right)^{\frac{1}{2}},
		\end{aligned}
	\end{equation}
	where $H^+ = \max \{H,0\}$. Equality holds in (\ref{gH weight volume ineq}) if and only if $\Omega$ is a geodesic ball centered at the origin.
	
\end{theorem}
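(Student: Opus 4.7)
The strategy parallels the analysis used for Theorem~\ref{gH volume thm}: I rewrite the boundary as a small radial graph over $\mathbb{S}^n$, expand the relevant geometric quantities to second order in $u$, and exploit the two normalizations to kill the low Fourier modes. Since $g(\Phi)\geq 0$ and $H^+\geq H$, it suffices to prove (\ref{gH weight volume ineq}) with $H$ in place of $H^+$. A direct computation on the geodesic ball, using $\phi(r)=\sinh r$, $1+\Phi(r)=\phi'(r)=\cosh r$, and $\int_{\bar{B}_\rho}\phi'\,dv=\omega_n\phi(\rho)^{n+1}/(n+1)$, shows that the right-hand side of (\ref{gH weight volume ineq}) equals $\int_{\partial\bar{B}_\rho}g(\Phi)H\,d\mu$; since $\int_\Omega\phi'\,dv=\int_{\bar{B}_\rho}\phi'\,dv$ holds by hypothesis, this right-hand side is a constant independent of $u$. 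Thus the problem reduces to showing that the functional
$$
J(u):=\int_{M_u}g(\Phi)H\,d\mu,\qquad M_u=\{(\rho(1+u(x)),x):x\in\mathbb{S}^n\},
$$
is minimized at $u\equiv 0$ over admissible perturbations.

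Next I would Taylor-expand $J$ around $u=0$, using the standard formulas for the area element and the mean curvature of a radial graph in the warped-product metric on $\mathbb{H}^{n+1}$, together with $\Phi(\rho(1+u))=\Phi(\rho)+\rho\phi(\rho)u+O(u^2)$, to obtain
$$
J(u)=J(0)+L(u)+Q(u)+O(\|u\|_{C^1}^3),
$$
with $L$ linear in $u$ and $Q$ a quadratic form in $u$ and $\nabla_{\mathbb{S}^n}u$. The linear term reduces to a multiple of $\int_{\mathbb{S}^n}u$ plus a linear combination of the first-harmonic moments $\int_{\mathbb{S}^n}u\,x_i$. Expanding the weighted volume constraint $\int_{\Omega_u}\phi'\,dv=\int_{\bar{B}_\rho}\phi'\,dv$ to first order yields $\int_{\mathbb{S}^n}u=O(\|u\|_{C^0}^2)$, and expanding the barycenter condition $\operatorname{bar}(\Omega_u)=O$ yields $\int_{\mathbb{S}^n}u\,x_i=O(\|u\|_{C^0}^2)$ for $i=1,\dots,n+2$, so $L(u)$ is absorbed into the cubic remainder.

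The core analytic step is a Fourier decomposition $u=\sum_{k\geq 0}u_k$ into eigenspaces of $-\Delta_{\mathbb{S}^n}$ with eigenvalues $\lambda_k=k(k+n-1)$. After integration by parts, the quadratic form diagonalizes as $Q(u)=\sum_{k\geq 2}c_k\|u_k\|_{L^2}^2$, where schematically
$$
c_k=\alpha_k(\rho)\,g(\Phi(\rho))+\beta_k(\rho)\bigl[(1+\Phi(\rho))g'(\Phi(\rho))-g(\Phi(\rho))\bigr],
$$
with $\alpha_k,\beta_k>0$ for $k\geq 2$. Here the hypothesis $(1+s)g'(s)-g(s)\geq 0$ is exactly what forces $c_k\geq 0$: it controls the sign of the contribution from the first-order variation of the weight $g(\Phi)$, which in $\mathbb{H}^{n+1}$ carries an extra factor $\phi'=1+\Phi$ that must be compared against the undifferentiated $g$ appearing in the variation of $H\,d\mu$. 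With $c_k$ bounded below by a positive constant uniformly for $k\geq 2$, the cubic remainder is absorbed once $\|u\|_{C^1}<\varepsilon$, yielding $J(u)\geq J(0)$; equality then forces $u_k=0$ for $k\geq 2$, and combined with the two constraints killing the $k=0,1$ modes this gives $u\equiv 0$, i.e., $\Omega=\bar{B}_\rho$.

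The step I expect to be the main obstacle is the explicit bookkeeping producing the coefficients $c_k$: the second-order variation of $H$ in the warped-product metric produces numerous cross-terms involving $u$, $\nabla u$, and $\nabla^2 u$, and these must be organized cleanly enough to verify that the two additive pieces identified above truly exhaust all sources of indefinite-sign contributions, so that $(1+s)g'(s)-g(s)\geq 0$ is the sharp algebraic condition making each $c_k$ nonnegative. The analogous computation in Theorem~\ref{gH volume thm} is easier because only a single weight appears explicitly; here the interaction between $g(\Phi)$ and the hyperbolic identity $1+\Phi=\phi'$ is the genuinely new ingredient.
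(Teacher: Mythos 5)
Your opening reduction --- ``it suffices to prove (\ref{gH weight volume ineq}) with $H$ in place of $H^+$'' --- is a valid logical implication, but it asks you to prove something false. Under only $C^1$ control of $u$ the mean curvature can be driven arbitrarily negative on a large portion of $M$ (this is exactly the content of Glaudo's counterexample recalled in the introduction), and the inequality $\int_M g(\Phi)H\,d\mu \geq \chi\left(\int_\Omega\phi'\,dv\right)$ simply fails for $C^1$-small $u$. The truncation to $H^+$ is structurally necessary, not cosmetic, and a proof strategy that begins by discarding it cannot succeed.

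The concrete place where your expansion breaks is the claim $J(u)=J(0)+L(u)+Q(u)+O(\|u\|_{C^1}^3)$. The Taylor expansion of $\int_M g(\Phi) H\,d\mu$ contains the cubic term $\int_{\mathbb{S}^n}\frac{\phi^{n-2}g(\Phi)\rho^3}{D^2}\nabla^2 u[\nabla u,\nabla u]\,dA$ --- it appears explicitly in the paper's formula (\ref{gPhiH int}), precisely because it cannot be shoved into the error. This term is quadratic in $\nabla u$ but linear in $\nabla^2 u$; under $C^1$-smallness alone $\|\nabla^2 u\|_\infty$ is unbounded, so it is neither $O(\|u\|_{C^1}^3)$ nor even $o(\|\nabla u\|_{L^2}^2)$, and it cannot be absorbed by any quadratic form $Q$ with positive coefficients. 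The paper's actual proof integrates by parts to convert this term into $\operatorname{div}\left(\frac{\phi}{D}\nabla u\right)|\nabla u|^2$, uses the divergence form (\ref{H formula}) of $H$ to bound $\operatorname{div}\left(\frac{\phi}{D}\nabla u\right)$ from above by a constant plus a multiple of $H^-$, and then spends $\int_M g(\Phi)H^-\,d\mu$ to cancel this dangerous contribution --- this is why $H^+ = H + H^-$ must appear in the final statement. That step is combined with a low/high frequency splitting $u=u_1+u_2$, where finite-dimensionality gives $\|\nabla^2 u_1\|_\infty=O(\varepsilon)$ and the $u_2$-part is exactly what the $H^-$-absorption tames. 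All of this is missing from your proposal, and it is the technical heart of the argument. Your guess that the main obstacle is bookkeeping the coefficients $c_k$ and locating where $(1+s)g'(s)\geq g(s)$ enters misidentifies the difficulty: once the cubic term is handled via $H^-$, the application of the convexity hypothesis is a comparatively short computation in the proof's inequality (\ref{gH int weight volume}); without taming the cubic term, no hypothesis on $g$ can make the argument close.
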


\begin{remark}
	Recall that $\phi'=\Phi+1$ in $\mathbb{H}^{n+1}$. When taking $g(s)=1+s$, the inequality (\ref{gH weight volume ineq}) reduces to (\ref{SX2}) in the mean convex case. From the condition $(1+s)g'(s)-g(s)\geq 0$, we can deduce $g(s)=C(s)(1+s)$ for some positive and non-decreasing function $C(s)$. Theorem \ref{gH weight volume thm} generates a broad class of geometric inequalities via different convex weight selections, particularly including ${\phi'}^{\alpha} (\alpha\geq1)$ and ${\Phi}^{\alpha} (\alpha\geq1)$.
\end{remark}

For a bounded domain $\Omega$ in $\mathbb{H}^{n+1}$ with smooth boundary $M$, it is called \textit{static convex} if its second fundamental form satisfies
\begin{equation}\label{static convex}
	h_{ij}\geq\frac{\bar{\nabla}_{N} \phi'}{\phi'}g_{ij} > 0, \quad \text{everywhere on }M,
\end{equation} 
where $N$ is the outward unit normal vector on $M$. Applying locally constrained curvature flows, Hu-Li \cite{HL22} proved the following weighted geometric inequalities for static convex domains in $\mathbb{H}^{n+1}$.
\begin{mainthm}[\cite{HL22}]
	Let $\Omega$ be a static convex domain with smooth boundary $M$ in $\mathbb{H}^{n+1}$. For $0 \leq k \leq n$, there holds
	\begin{equation}\label{HL ineq}
		\int_M \phi^{\prime} \sigma_{k} d \mu \geq \binom{n}{k} \omega_n \left(\left(\frac{(n+1) \int_{\Omega} \phi^{\prime} d v}{\omega_n}\right)^{\frac{2}{k+1}}+\left(\frac{(n+1) \int_{\Omega} \phi^{\prime} d v}{\omega_n}\right)^{\frac{2(n-k)}{(n+1) (k+1)}}\right)^{\frac{k+1}{2}}.
	\end{equation}
	Equality holds in (\ref{HL ineq}) if and only if $\Omega$ is a geodesic ball centered at the origin.
	
\end{mainthm}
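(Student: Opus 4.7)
The plan is to establish (\ref{HL ineq}) via a locally constrained inverse curvature flow that preserves the weighted volume $\int_\Omega \phi' dv$ and is monotone in the weighted curvature integral $\int_M \phi' \sigma_k d\mu$. Following the strategy of Brendle--Guan--Li for quermassintegrals and its weighted adaptations, I would consider a flow of the form
\begin{equation*}
\partial_t X = \Bigl(c_{n,k}\,\phi'\,\tfrac{\sigma_{k-1}(\kappa)}{\sigma_{k}(\kappa)} - \langle V,\nu\rangle\Bigr)\nu,
\end{equation*}
where $V = \phi(r)\partial_r = \bar{\nabla}\Phi$ is the conformal Killing field on $\mathbb{H}^{n+1}$, $\nu$ is the outer unit normal, and $c_{n,k}$ is a dimensional normalization. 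The inward drift $-\langle V,\nu\rangle$ and the outward inverse-curvature term are balanced so that the weighted volume is preserved; if the flow admits long-time existence and converges smoothly to a geodesic sphere centered at the origin, then evaluating the monotone quantity on the limiting sphere yields the sharp inequality.

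The first step is to verify the conservation and monotonicity identities. Differentiating $\int_{\Omega_t}\phi'dv$ along the flow and applying a weighted Minkowski identity in $\mathbb{H}^{n+1}$, obtained by contracting $\bar{\nabla}V = \phi'\bar{g}$ with the Newton $(k-1)$-st tensor, determines $c_{n,k}$ so that $\int_{\Omega_t}\phi'dv$ is preserved along the flow. For the monotonicity $\tfrac{d}{dt}\int_{M_t}\phi'\sigma_k d\mu \leq 0$, one differentiates using the standard evolution equations for $g_{ij}$, $d\mu$, and $\sigma_k$ under a general normal speed, integrates by parts, and applies the Newton--MacLaurin inequality $\sigma_{k-1}\sigma_{k+1} \leq \sigma_k^2$. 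Here the static convexity hypothesis (\ref{static convex}) is used twice: it ensures positivity of $\sigma_k$ so the flow speed is well-defined, and it controls the sign of the negative curvature terms produced by the ambient sectional curvature $-1$.

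The main obstacle is preservation of the static convexity condition along the flow together with the a priori estimates required for long-time existence. Preservation of (\ref{static convex}) calls for a tensorial maximum principle applied to the tensor $h_{ij} - (\bar{\nabla}_N \phi'/\phi')g_{ij}$; one must compute the evolution of this tensor under the flow, use the concavity of $\sigma_{k-1}/\sigma_{k}$ to control the second-derivative terms, and exploit the algebraic structure of the flow speed at a first touching point. Uniform $C^0$ bounds should follow from barrier arguments based on the monotonicity of $\langle V,\nu\rangle$ on geodesic spheres, and higher regularity from Krylov--Safonov theory for concave fully nonlinear parabolic equations once uniform curvature bounds are secured.

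Once these ingredients are in place, the flow converges smoothly and exponentially to a geodesic sphere $\partial B_{r_\infty}$ centered at the origin, with radius fixed by $\omega_n \sinh^{n+1}(r_\infty)/(n+1) = \int_\Omega \phi'dv$. A direct computation on $\partial B_{r_\infty}$, using $\sigma_k = \binom{n}{k}\coth^k r_\infty$, $|\partial B_{r_\infty}| = \omega_n\sinh^n r_\infty$, and the identity $\cosh^2 r_\infty = 1 + \sinh^2 r_\infty$, reproduces precisely the right-hand side of (\ref{HL ineq}). The rigidity statement follows because equality along the flow forces $\sigma_{k-1}\sigma_{k+1} = \sigma_k^2$ pointwise, so all principal curvatures coincide, and the conserved weighted volume then pins the hypersurface to a geodesic sphere centered at the origin.
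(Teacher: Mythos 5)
This statement is attributed to Hu--Li \cite{HL22} and appears in the paper only as cited background; the present paper does not prove it, so there is no ``paper's own proof'' to compare against. Your locally constrained inverse curvature flow sketch is in the spirit of what Hu--Li actually did in \cite{HL22}, so conceptually you are aligned with the cited source. It is, however, an entirely different route from the techniques developed in this paper, which handle only nearly spherical sets: Taylor expansion of $\int_M g(\Phi)\sigma_k\,d\mu$ around the geodesic sphere, spherical-harmonic decomposition, and Poincar\'e-type estimates under the $\int_\Omega\phi'\,dv$ and barycenter constraints. The paper's own analogue of this inequality is Theorem~\ref{gsigmak weight volume thm}, which replaces $\phi'$ by a general convex weight $g(\Phi)$ but is only established under $\|u\|_{W^{2,\infty}}\ll 1$; the flow argument gives the global result for static convex domains, which the perturbative method cannot.

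Two substantive gaps in your sketch worth flagging. First, the claim that the speed $c_{n,k}\,\phi'\sigma_{k-1}/\sigma_k - \langle V,\nu\rangle$ exactly conserves $\int_\Omega\phi'\,dv$ does not follow from the Minkowski--Hsiung identity $\int_M\phi'\sigma_{k-1}\,d\mu = \tfrac{k}{n-k+1}\int_M\langle V,\nu\rangle\sigma_k\,d\mu$: the speed carries an extra factor of $\phi'$ in the inverse-curvature term, so $\int_M\phi'\cdot(\text{speed})\,d\mu$ involves $\int_M(\phi')^2\sigma_{k-1}/\sigma_k\,d\mu$, which is not matched by the right-hand side. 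In the literature this is handled either by adjusting the speed or by settling for monotonicity (not conservation) of the weighted volume via a Heintze--Karcher type inequality. Second, preservation of static convexity~(\ref{static convex}) and the global a priori estimates are the real technical core, and your plan only gestures at them; without a worked-out tensor maximum principle and uniform curvature bounds the argument is incomplete. Since the present paper merely quotes the theorem and does not depend on its proof, neither gap affects anything here, but they would need to be filled to reconstruct the cited result.
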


When $k=1$, (\ref{HL ineq}) reduces to (\ref{SX2}). Next, we investigate a generalization of (\ref{HL ineq}) with more general convex weights for nearly spherical sets of the form $M=\{(\rho(1+u(x)),x):x\in\mathbb{S}^n\}$, where $\|u\|_{W^{2,\infty}}\ll1$. Here, since higher order curvature terms are involved, we will require small bounds on $|\nabla^2 u|$ as well.

\begin{theorem}\label{gsigmak weight volume thm}
	Let $\Omega\subset \mathbb{H}^{n+1}$ be a domain with boundary $M=\{(\rho(1+u(x)),x):x\in\mathbb{S}^n\}$, where $\int_{\Omega}\phi' dv = \int_{\bar{B}_{\rho}}\phi' dv$ and $\operatorname{bar}(\Omega)=O$. Let $g$ be a non-decreasing convex $C^3$ positive function which satisfies $(1+s)g'(s)-g(s)\geq 0$. Then there exists $\varepsilon>0$ such that if $\|u\|_{W^{2,\infty}}<\varepsilon$, it holds for $0\leq k \leq n$,
	\begin{equation}\label{g sigmak weight volume ineq}
		\begin{aligned}
			\int_M g(\Phi) \sigma_{k} d \mu \geq &\binom{n}{k} \omega_n g\left(\left(1+\left(\frac{(n+1)\int_{\Omega}\phi'dv}{w_n}\right)^{\frac{2}{n+1}}\right)^{\frac{1}{2}}-1\right) \\& \times \left(\left(\frac{(n+1)\int_{\Omega}\phi'dv}{w_n}\right)^{\frac{2(n-k)}{(n+1)k}} + \left(\frac{(n+1)\int_{\Omega}\phi'dv}{w_n}\right)^{\frac{2n}{(n+1)k}}\right)^{\frac{k}{2}}.
		\end{aligned}
	\end{equation}
	Equality holds in (\ref{g sigmak weight volume ineq}) if and only if $\Omega$ is a geodesic ball centered at the origin.
	
\end{theorem}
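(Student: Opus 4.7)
The plan is a Fuglede-type second variation argument, as deployed in \cite{G22,VW24,ZZ23}, carried out here in the hyperbolic warped product with the general weight $g(\Phi)$ and for every $\sigma_k$, $0\le k\le n$. Writing $r(x)=\rho(1+u(x))$ for $x\in\mathbb{S}^n$, set
\[
\mathcal{D}(u):=\int_M g(\Phi)\sigma_k\,d\mu \;-\; F\!\left(\int_\Omega \phi'\,dv\right),
\]
where $F(V_\phi)$ denotes the right hand side of \eqref{g sigmak weight volume ineq} viewed as a function of the weighted volume $V_\phi(\Omega):=\int_\Omega \phi'\,dv$. The goal is $\mathcal{D}(u)\ge 0$ whenever $V_\phi(\Omega)=V_\phi(\bar B_\rho)$, $\operatorname{bar}(\Omega)=O$, and $\|u\|_{W^{2,\infty}}<\varepsilon$. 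From the standard warped-product formulas for the induced metric, the outward unit normal, and the second fundamental form of a radial graph, each of $d\mu$, $g(\Phi)$, $\sigma_k$ and $V_\phi(\Omega)$ expands as a polynomial of degree $\le 2$ in $(u,\nabla u,\nabla^2 u)$, with a cubic remainder controlled by $\|u\|_{W^{2,\infty}}$ times a quadratic quantity. Because $\sigma_k$ is polynomial of degree $k$ in the principal curvatures and these depend smoothly on $(u,\nabla u,\nabla^2 u)$, the expansion is legitimate in the $W^{2,\infty}$-norm --- explaining why $W^{2,\infty}$-closeness is needed rather than the mere $C^1$-closeness of Theorem \ref{gH volume thm}.

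The first variation $\mathcal{D}'(0)$ is handled by the weighted-volume constraint. That constraint gives $\int_{\mathbb{S}^n} u\,d\sigma=O(\|u\|^2)$ at leading order, and $F$ is defined so that the linear part of $\int_M g(\Phi)\sigma_k\,d\mu$ under a radial dilation matches $F'(V_\phi(\bar B_\rho))$ times the linear part of $V_\phi$. On the complementary subspace $\int_{\mathbb{S}^n} u\,d\sigma=0$, the radial first variation vanishes pointwise for every spherically symmetric integrand, so $\mathcal{D}'(0)[u]\equiv 0$.

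Next, analyze the quadratic form $Q[u]:=\mathcal{D}''(0)[u,u]$. Decompose $u=\sum_{j\ge 0} u_j$ in spherical harmonics with $-\Delta_{\mathbb{S}^n} u_j=\lambda_j u_j$, $\lambda_j=j(j+n-1)$. The weighted-volume constraint absorbs $u_0$ into higher order; the barycenter condition, via the expansion $\operatorname{bar}(\Omega)\cdot e_i=c_{n,\rho}\int_{\mathbb{S}^n} x_i\,u\,d\sigma+O(\|u\|^2)$ valid in space forms (\cite{BDF17,ZZ23}), absorbs all $u_1$ modes. On each mode $j\ge 2$, $Q$ reduces to an algebraic coefficient $q_j(n,k,\rho,g)$ times $\|u_j\|_{H^{k/2}}^2$. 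The hypothesis $(1+s)g'(s)-g(s)\ge 0$ enters precisely here: a direct computation (modelled on the corresponding step of \cite{WZ23} for the sharp version and on \cite{HL22} for the unweighted case) shows that $q_j$ factors, up to manifestly positive quantities, as $[(1+s)g'(s)-g(s)]\big|_{s=\Phi(\rho)}$ plus a term already nonnegative once $j\ge 2$. Consequently $Q[u]\ge c_*\sum_{j\ge 2}(1+\lambda_j^{k/2})\|u_j\|_{L^2}^2$ for some $c_*=c_*(n,k,\rho,g)>0$.

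Finally, the cubic remainder in $\mathcal{D}(u)$ is bounded by $C\|u\|_{W^{2,\infty}}$ times the same quadratic quantity appearing in $Q$; choosing $\varepsilon$ with $C\varepsilon<c_*/2$ yields $\mathcal{D}(u)\ge \tfrac{c_*}{2}\sum_{j\ge 2}(1+\lambda_j^{k/2})\|u_j\|_{L^2}^2\ge 0$, and equality forces $u_j=0$ for all $j\ge 2$; the two constraints then force $u_0=u_1=0$, so $\Omega=\bar B_\rho$. The main obstacle is the spectral computation itself: for $k\ge 2$ one must linearize the full $k$th Newton transformation in the warped hyperbolic background while carrying the weight $g(\Phi)$ through, and checking that the algebraic coefficient $q_j$ really does isolate $(1+s)g'(s)-g(s)$ as the decisive factor --- this is the delicate point, and also the structural reason the hypothesis on $g$ takes this precise form.
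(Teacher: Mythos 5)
Your overall strategy (Fuglede-type Taylor expansion, kill $u_0$ by the constraint and $u_1$ by the barycenter condition, then absorb the cubic remainder) is indeed the method the paper uses, so the skeleton matches. But the proposal has a real gap at the step you yourself flag as "delicate."

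You assert that on each spherical-harmonic mode $j\ge 2$ the second variation reduces to $q_j(n,k,\rho,g)\,\|u_j\|_{H^{k/2}}^2$, i.e.\ that the quadratic form carries a factor scaling like $\lambda_j^{k/2}$. This is structurally incorrect. Although $\sigma_k$ is degree $k$ in the Weingarten tensor and hence formally degree $k$ in $\nabla^2 u$, the paper's Lemma \ref{lem expression g sigmak} shows that after integration by parts and the identities of \cite[Lemma 4.2]{VW24} (which convert $\int_{\mathbb{S}^n} u^i u_j [T_m]_i^j(\nabla^2 u)$, $\int_{\mathbb{S}^n}\sigma_m(\nabla^2 u)$, $\int_{\mathbb{S}^n} u\,\sigma_m(\nabla^2 u)$ into lower-order terms plus $O(\varepsilon)$), the entire quadratic part collapses to a linear combination of $\|u\|_{L^2}^2$ and $\|\nabla u\|_{L^2}^2$, independent of $k$ in its differential order. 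Thus on mode $j$ the coefficient is $a\lambda_j+b$ (linear in $\lambda_j$), not $\sim\lambda_j^{k/2}$. This is not cosmetic: the positivity test you would run on $q_j$ depends on having the right $\lambda_j$-dependence, and the Poincar\'e inequality used by the paper is exactly the uniform bound $\lambda_j\ge 2(n+1)$ for $j\ge 2$, applied once to trade $\|u\|_{L^2}^2$ for $\|\nabla u\|_{L^2}^2$ — no mode-by-mode bookkeeping of powers of $\lambda_j$ is needed or correct.

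The second gap is that the decisive computation is only described, not done. The role of $(1+s)g'(s)-g(s)\ge 0$ in the paper is concrete and two-fold: after substituting the constraint \eqref{u weight volum}, it makes the extra $u^2$-coefficient $\tfrac{n}{2}\phi^{n-k}{\phi'}^k(\phi'g'+Kg)=\tfrac{n}{2}\phi^{n-k}{\phi'}^k(\phi'g'-g)$ nonnegative (so it can be discarded), and it converts the $|\nabla u|^2$-coefficient $\frac{k}{n}\phi^{n-k}{\phi'}^{k-1}g'-\frac{k(k-1)}{2n}K\phi^{n-k}{\phi'}^{k-2}g+\frac{(n-k)(k+1)}{2n}\phi^{n-k-2}{\phi'}^{k}g$ into something manifestly positive via $g'\ge g/\phi'$. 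Your proposal does not exhibit these coefficients, does not invoke the integration-by-parts identities that make the reduction to $\|u\|_{L^2}^2$ and $\|\nabla u\|_{L^2}^2$ possible, and explicitly defers the coefficient check. Until that computation is carried out — and with the correct ($\lambda_j$-linear, not $\lambda_j^{k/2}$) structure — the proof is incomplete.
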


In \cite{ZZ23}, Zhou-Zhou eatablished the stability of weighted quermassintegral inequalities by estimating the upper bound of the Fraenkel asymmetry $\alpha(\Omega)$ in $N^{n+1}(K)$, defined as
$$
\alpha(\Omega)=\inf \left\{\operatorname{Vol}\left(\Omega \Delta \bar{B}_\rho(x)\right): x \in N^{n+1}(K), \operatorname{Vol}(\Omega)=\operatorname{Vol}\left(\bar{B}_\rho(x)\right)\right\},
$$
where $\Delta$ is the symmetric difference between two sets. In the following, we prove the quantitative quermassintegral inequalities with general convex weights for nearly spherical sets in $\mathbb{H}^{n+1}$.

\begin{theorem}\label{gsgmk quermassintegral thm}
	Suppose $0\leq k \leq n-1$, $-1\leq j < k$.
    Let $\Omega\subset \mathbb{H}^{n+1}$ be a domain with boundary $M=\{(\rho(1+u(x)),x):x\in\mathbb{S}^n\}$, where $W_{j}(\Omega)=W_{j}(\bar{B}_{\rho})$ and $\operatorname{bar}(\Omega)=O$. Let $g$ be a non-decreasing convex $C^3$ positive function which satisfies $(1+s)g'(s)-g(s)\geq 0$. For any $\eta>0$, there exists $\varepsilon>0$ such that if $\|u\|_{W^{2,\infty}}<\varepsilon$, then 
	\begin{equation}\label{g sigmak weight quermassintegral ineq}
		\begin{aligned}
			&\int_M g(\Phi) \sigma_{k} d \mu - \int_{\partial\bar{B}_{\rho}} g(\Phi) \sigma_{k} d \mu\\ \geq & \left(\frac{n(n-k)(k-j)}{4\operatorname{Area}(\mathbb{S}^n)}\binom{n}{k}\frac{{\phi'}^{k}(\rho)g(\Phi(\rho))}{\phi^{n+k+2}(\rho)} - \eta\right)\alpha^2 (\Omega).
		\end{aligned}
	\end{equation}
	
\end{theorem}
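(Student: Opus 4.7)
The plan is to carry out a Fuglede-type second-order expansion of both the weighted integral and the constraint functional, and then extract a sharp quadratic lower bound by spectral decomposition on $\mathbb{S}^n$. Parametrising $M = \{(\rho(1+u(x)),x) : x\in \mathbb{S}^n\}$, I would compute the induced area element, the outward unit normal, the second fundamental form, and hence each $\sigma_k$ and $g(\Phi)$ as explicit polynomial expressions in $u$, $\nabla u$, and $\nabla^2 u$. Taylor expansion about $u=0$ then produces
\begin{equation*}
	\int_M g(\Phi)\sigma_k d\mu - \int_{\partial \bar{B}_\rho} g(\Phi)\sigma_k d\mu = A_k \int_{\mathbb{S}^n} u\, d\sigma + Q_k(u) + R_k(u),
\end{equation*}
where $A_k$ is an explicit constant, $Q_k$ is a quadratic form in $(u,\nabla u)$, and the cubic remainder obeys $|R_k(u)| \le C\|u\|_{W^{2,\infty}}\|u\|_{L^2(\mathbb{S}^n)}^2$. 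An analogous expansion of the constraint $W_j(\Omega) = W_j(\bar{B}_\rho)$ gives $0 = B_j \int u\,d\sigma + \tilde{Q}_j(u) + \tilde{R}_j(u)$, solving for $\int u\, d\sigma$ up to a quadratic error. Substituting back, the left-hand side of (\ref{g sigmak weight quermassintegral ineq}) reduces to a single quadratic form $\mathcal{Q}(u) := Q_k(u) - (A_k/B_j)\tilde{Q}_j(u)$, modulo a remainder of size $O(\varepsilon\|u\|_{L^2}^2)$.

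Next, I would decompose $u = \sum_{i \ge 0} u_i$ into spherical harmonics with $-\Delta_{\mathbb{S}^n} u_i = i(i+n-1)u_i$. The integral constraint fixes $u_0$, while the barycenter condition $\operatorname{bar}(\Omega) = O$, expanded to second order, forces $\|u_1\|_{L^2}^2 = O(\varepsilon\|u\|_{L^2}^2)$. A direct diagonalisation of $\mathcal{Q}$ on the orthogonal complement then yields
\begin{equation*}
	\mathcal{Q}(u) \ge \sum_{i \ge 2} \mu_i \|u_i\|_{L^2(\mathbb{S}^n)}^2,
\end{equation*}
with $\mu_i > 0$ for every $i \ge 2$ and the minimum attained at $i = 2$; the eigenvalue $2(n+1)$ of $-\Delta_{\mathbb{S}^n}$ at this mode produces the combinatorial factor $n(n-k)(k-j)$, and the hypothesis $(1+s)g'(s) - g(s) \ge 0$ is used to sign the weight-dependent cross terms so that the bound depends only on $g(\Phi(\rho))$ itself and not its derivatives. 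To convert this $L^2$ lower bound into one on $\alpha^2(\Omega)$, I would use the elementary estimate $\alpha(\Omega) \le \operatorname{Vol}(\Omega \Delta \bar{B}_{\bar{\rho}}(O)) = \rho\phi^n(\rho) \|u\|_{L^1(\mathbb{S}^n)}(1 + o(1))$, where $\bar{\rho} = \rho + O(\|u\|_{L^2})$ is the radius of the volume-equivalent ball, followed by the Cauchy-Schwarz estimate $\|u\|_{L^1}^2 \le \omega_n \|u\|_{L^2}^2$. Chaining these inequalities with the quadratic lower bound produces (\ref{g sigmak weight quermassintegral ineq}); the factor $4$ in the denominator of the sharp constant arises from this $L^1$--$L^2$ comparison together with the precise normalisation of $\mu_2$.

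The principal obstacle is the careful bookkeeping of the quadratic form $\mathcal{Q}$ under a general convex weight $g$: the expansions of $g(\Phi)$, $\sigma_k$, and $d\mu$ produce many interlocking quadratic terms in $u$ and $\nabla u$, and isolating the exact coefficient of the second spherical-harmonic mode is delicate. Verifying that the structural condition $(1+s)g'(s) - g(s) \ge 0$ suffices to absorb every cross term that is not manifestly non-negative, and producing the closed-form coefficient $\binom{n}{k}{\phi'}^k(\rho)g(\Phi(\rho))/\phi^{n+k+2}(\rho)$, constitutes the computational heart of the argument. Once this is in hand, the arbitrary $\eta > 0$ in the statement absorbs the cubic remainders $R_k$ and $\tilde{R}_j$ by choosing $\varepsilon$ sufficiently small depending on $\eta$.
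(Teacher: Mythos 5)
Your overall architecture matches the paper: Taylor-expand $\int_M g(\Phi)\sigma_k d\mu$ in $u$, expand the constraint $W_j(\Omega)=W_j(\bar B_\rho)$ to solve for $\int_{\mathbb{S}^n}u$, substitute, drop the $g''$ term by convexity, and use the hypothesis $(1+s)g'(s)\geq g(s)$ to bound the remaining gradient coefficient below by $g(\Phi(\rho))$ alone, followed by a Poincar\'e step at eigenvalue $2(n+1)$. Up to that point your sketch is sound. The gap is in how you pass from the quadratic lower bound to the Fraenkel asymmetry. You propose $\alpha(\Omega)\leq\operatorname{Vol}(\Omega\Delta\bar B_{\bar\rho}(O))\approx\rho\phi^n(\rho)\|u\|_{L^1}$ followed by Cauchy--Schwarz $\|u\|_{L^1}^2\leq\omega_n\|u\|_{L^2}^2$. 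Track the constants: the quadratic form has the structure $a\|u\|_{L^2}^2+b\|\nabla u\|_{L^2}^2$ with $a=-nb$, so its value on the lowest admissible mode ($\lambda_2=2(n+1)$) is $(n+2)b\|u\|_{L^2}^2$, giving $\text{deficit}\gtrsim (n+2)b\,\alpha^2/(\omega_n\rho^2\phi^{2n})$. The paper instead keeps $\|\nabla u\|_{L^2}^2$ as the controlling quantity and invokes the sharper estimate of \cite[Lemma 4.3]{ZZ23}, namely $\alpha^2(\Omega)\leq\frac{1}{n^2}\omega_n\phi^{2n}(\rho)\rho^2\|\nabla u\|_{L^2}^2+O(\varepsilon)\|\nabla u\|_{L^2}^2$, which yields $\text{deficit}\gtrsim\frac{n^2}{2}b\,\alpha^2/(\omega_n\rho^2\phi^{2n})$ and hence the claimed $\frac{n(n-k)(k-j)}{4}$. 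The ratio of your constant to the paper's is $\frac{2(n+2)}{n^2}$, which is strictly less than $1$ for $n\geq4$; since the $-\eta$ slack only absorbs $O(\varepsilon)$ terms, not a deficit in the leading-order constant, your route fails to prove the theorem as stated for $n\geq4$. You need the $\alpha^2\lesssim\|\nabla u\|_{L^2}^2/n^2$ version of the asymmetry estimate, not the crude $L^1$--$L^2$ Cauchy--Schwarz bound.

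Two secondary misattributions worth correcting: the factor $4$ does not come from the $L^1$--$L^2$ comparison at all, but from the product of the $\frac{1}{2n}$ in the $|\nabla u|^2$ coefficient, the $\frac{1}{2}$ from the Poincar\'e absorption ($\frac{n+2}{2(n+1)}\geq\frac{1}{2}$), and the $n^2$ from \cite[Lemma 4.3]{ZZ23}; and the combinatorial factor $n(n-k)(k-j)$ does not come from the eigenvalue $2(n+1)$, but from the coefficient of $|\nabla u|^2$ produced by the constraint substitution combined with that same $n^2$. Finally, when $j\neq-1$ the volume-matched radius $\bar\rho$ is genuinely different from $\rho$, so $\operatorname{Vol}(\Omega\Delta\bar B_{\bar\rho})$ involves $|u-c|$ with $c=O(\varepsilon)\|u\|_{L^2}$ rather than $|u|$; this is controllable but needs to be spelled out rather than folded into a $(1+o(1))$ factor.
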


\begin{remark}
	Special cases of Theorem \ref{gsgmk quermassintegral thm} include:
	\begin{itemize}
		\item[(i)] For $g(s) = 1 + s$, inequality (\ref{g sigmak weight quermassintegral ineq}) reduces to the quantitative version of (\ref{HLW ineq});
		\item[(ii)] For $g(s) = s$, it yields the quantitative version of (\ref{WZ ineq}).
	\end{itemize}
	This result extends Zhou-Zhou's work \cite[Theorem 1.6]{ZZ23}.
\end{remark}

A direct consequence of Theorem \ref{gsgmk quermassintegral thm} is the following Alexandrov-Fenchel type inequalities with general convex weights for nearly spherical sets in $\mathbb{H}^{n+1}$.

\begin{corollary}\label{gsigmak quermassintegral coro}
	Suppose $0\leq k \leq n$, $-1\leq j < k$.
	Let $\Omega\subset \mathbb{H}^{n+1}$ be a domain with boundary $M=\{(\rho(1+u(x)),x):x\in\mathbb{S}^n\}$, where $W_{j}(\Omega)=W_{j}(\bar{B}_{\rho})$ and $\operatorname{bar}(\Omega)=O$. Let $g$ be a non-decreasing convex $C^3$ positive function which satisfies $(1+s)g'(s)-g(s)\geq 0$. Then there exists $\varepsilon>0$ such that if $\|u\|_{W^{2,\infty}}<\varepsilon$, it holds 
	\begin{equation}\label{g sigmak quer ineq}
		\begin{aligned}
			\int_M g(\Phi) \sigma_{k} d \mu \geq &\zeta_{k, j}\left(W_j (\Omega)\right),
		\end{aligned}
	\end{equation}
	where $\zeta_{k, j}$ is the unique function such that equality holds in (\ref{g sigmak quer ineq}) for geodesic spheres, and equality holds in (\ref{g sigmak quer ineq}) if and only if $\Omega$ is a geodesic ball centered at the origin. 
	
\end{corollary}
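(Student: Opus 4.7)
The strategy is to deduce (\ref{g sigmak quer ineq}) from Theorem \ref{gsgmk quermassintegral thm} by simply dropping the non-negative Fraenkel-asymmetry term. Fix $0\le k\le n-1$ and $-1\le j<k$, and abbreviate the strictly positive coefficient appearing in (\ref{g sigmak weight quermassintegral ineq}),
$$C_{k,j,\rho}:=\frac{n(n-k)(k-j)}{4\operatorname{Area}(\mathbb{S}^{n})}\binom{n}{k}\frac{{\phi'}^{k}(\rho)\,g(\Phi(\rho))}{\phi^{n+k+2}(\rho)}>0.$$
I would pick any $\eta\in(0,C_{k,j,\rho})$ and invoke Theorem \ref{gsgmk quermassintegral thm} to obtain an $\varepsilon>0$ such that whenever $\|u\|_{W^{2,\infty}}<\varepsilon$,
$$\int_{M}g(\Phi)\sigma_{k}\,d\mu\;\ge\;\int_{\partial\bar{B}_{\rho}}g(\Phi)\sigma_{k}\,d\mu+(C_{k,j,\rho}-\eta)\,\alpha^{2}(\Omega)\;\ge\;\int_{\partial\bar{B}_{\rho}}g(\Phi)\sigma_{k}\,d\mu.$$

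Next I would construct the comparison function $\zeta_{k,j}$. On a geodesic ball $\bar{B}_{r}$ centered at $O$, both $r\mapsto W_{j}(\bar{B}_{r})$ and $I_{k}(r):=\int_{\partial\bar{B}_{r}}g(\Phi)\sigma_{k}\,d\mu$ are explicit smooth functions, and the warped-product structure together with the positivity of $\phi$ ensures that $W_{j}(\bar{B}_{r})$ is strictly increasing in $r$. Hence $W_{j}$ admits a smooth inverse $W_{j}^{-1}$ and $\zeta_{k,j}(t):=I_{k}(W_{j}^{-1}(t))$ is by construction the unique function realizing equality in (\ref{g sigmak quer ineq}) on geodesic balls. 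Since the hypothesis $W_{j}(\Omega)=W_{j}(\bar{B}_{\rho})$ forces $\rho=W_{j}^{-1}(W_{j}(\Omega))$, the chain of inequalities above reads exactly $\int_{M}g(\Phi)\sigma_{k}\,d\mu\ge\zeta_{k,j}(W_{j}(\Omega))$.

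For the rigidity statement I would run the same estimate in reverse: if equality holds in (\ref{g sigmak quer ineq}), then (\ref{g sigmak weight quermassintegral ineq}) forces $(C_{k,j,\rho}-\eta)\,\alpha^{2}(\Omega)\le 0$ for the fixed admissible $\eta$, and hence $\alpha(\Omega)=0$. By the definition of the Fraenkel asymmetry $\Omega$ coincides (up to a set of measure zero) with a geodesic ball of the same volume, and the barycenter constraint $\operatorname{bar}(\Omega)=O$ then pins this ball at the origin, so $\Omega=\bar{B}_{\rho}$.

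The main obstacle is the endpoint $k=n$, which is outside the range of Theorem \ref{gsgmk quermassintegral thm} (and at $k=n$ the factor $(n-k)$ makes $C_{k,j,\rho}$ degenerate, so the argument above cannot just be pushed to the boundary). To handle this endpoint I would turn instead to Theorem \ref{gsigmak weight volume thm}, which covers $0\le k\le n$ but under the normalization $\int_{\Omega}\phi'\,dv=\int_{\bar{B}_{\rho}}\phi'\,dv$ rather than $W_{j}(\Omega)=W_{j}(\bar{B}_{\rho})$. For nearly spherical sets one has $\int_{\Omega}\phi'\,dv=\int_{\bar{B}_{\rho}}\phi'\,dv+O(\|u\|_{W^{2,\infty}})$, and on balls $\int_{\bar{B}_{r}}\phi'\,dv$ is a smooth, strictly monotone function of $W_{j}(\bar{B}_{r})$; this lets one translate between the two normalizations at the cost of shrinking $\varepsilon$, reabsorbing the discrepancy into the nearly-spherical hypothesis. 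Rewriting the right-hand side of (\ref{gH weight volume ineq}) at $k=n$ in terms of $W_{j}(\Omega)$ then yields the $k=n$ case of (\ref{g sigmak quer ineq}), and rigidity is inherited from Theorem \ref{gsigmak weight volume thm}.
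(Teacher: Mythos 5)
Your argument for the range $0\le k\le n-1$ is exactly the paper's intent: the paper labels the corollary ``a direct consequence of Theorem~\ref{gsgmk quermassintegral thm}'' and supplies no further proof, so picking $\eta$ strictly smaller than the positive coefficient $C_{k,j,\rho}$, dropping the nonnegative $\alpha^2(\Omega)$ term, identifying $\zeta_{k,j}=I_k\circ W_j^{-1}$, and reading rigidity off $\alpha(\Omega)=0$ together with the barycenter constraint is precisely how the paper would fill it in. You are also right to flag $k=n$: Theorem~\ref{gsgmk quermassintegral thm} is stated only for $0\le k\le n-1$, and at $k=n$ the factor $(n-k)$ annihilates $C_{k,j,\rho}$, so the stability estimate degenerates and the argument genuinely cannot be ``pushed to the boundary.''

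The proposed repair for the endpoint, however, has a gap. Theorem~\ref{gsigmak weight volume thm} is normalized by $\int_\Omega\phi'\,dv=\int_{\bar B_{\tilde\rho}}\phi'\,dv$, which is a \emph{different} constraint from $W_j(\Omega)=W_j(\bar B_\rho)$; for a general nearly spherical $\Omega$ these two conditions select two different radii $\tilde\rho\neq\rho$. Applying Theorem~\ref{gsigmak weight volume thm} therefore yields $\int_M g(\Phi)\sigma_n\,d\mu\ge\int_{\partial\bar B_{\tilde\rho}}g(\Phi)\sigma_n\,d\mu$, whereas the corollary requires the lower bound $\int_{\partial\bar B_{\rho}}g(\Phi)\sigma_n\,d\mu=\zeta_{n,j}(W_j(\Omega))$. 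To pass from $\tilde\rho$ to $\rho$ you would need to know $\tilde\rho\ge\rho$, i.e.\ that $\int_\Omega\phi'\,dv\ge\int_{\bar B_\rho}\phi'\,dv$ whenever $W_j(\Omega)=W_j(\bar B_\rho)$ — but that is itself an Alexandrov--Fenchel type comparison of the same nature as what you are trying to prove, so the translation you invoke is circular (and the ``$O(\|u\|_{W^{2,\infty}})$ absorption'' does not resolve the issue, since the sign of the discrepancy is uncontrolled). The monotone map between $\int_{\bar B_r}\phi'\,dv$ and $W_j(\bar B_r)$ holds only on balls; it does not transfer information about a non-ball $\Omega$ between the two normalizations. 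A correct treatment of $k=n$ would instead re-run the computation leading to~\eqref{defict g sigmak} directly for $k=n$ and verify that the coefficient of $\|\nabla u\|^2$ (before the crude lower bound that the paper uses, which vanishes at $k=n$) is still nonnegative under the hypothesis $(1+s)g'(s)\ge g(s)$. It is worth noting that the paper itself is imprecise here: the range in the corollary exceeds that of the cited theorem, so the gap you spotted is genuinely present in the source and is not fixed by the route you sketched.
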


	\begin{remark}
		Recent work in \cite{PL25,W25} established weighted Alexandrov-Fenchel type inequalities (\ref{g sigmak weight volume ineq}) and (\ref{g sigmak quer ineq}) for static convex hypersurfaces in $\mathbb{H}^{n+1}$ by using locally constrained inverse curvature flows. Their results required the stronger condition $g'(s) \geq \frac{k}{k-1}\frac{g(s)}{s}$, whereas our results only assume $g'(s) \geq \frac{g(s)}{1+s}$.
	\end{remark}

Finally, we establish the stability of inequality (\ref{KW gphi ineq}) for nearly spherical sets in $\mathbb{R}^{n+1}$. 

%In this setting, since the weighted curvature integrals are scaling invariant, we restrict to the domain $\Omega$ which is close to the unit ball $\bar{B}_{1}$. The stability is characterized using a rescaled version of the Fraenkel asymmetry, defined as 
%$$
%\bar{\alpha}(\Omega)=\inf \left\{\frac{\operatorname{Vol}\left(\Omega \Delta \bar{B}_\rho(x)\right)}{\operatorname{Vol}\left(\bar{B}_\rho (x)\right)}: x \in \mathbb{R}^{n+1}, \operatorname{Vol}(\Omega)=\operatorname{Vol}\left(\bar{B}_\rho (x)\right)\right\}.
%$$

\begin{theorem}\label{thm gsk rn}
	Suppose $0\leq k \leq n$, $-1\leq j < k$. Let $\Omega\subset \mathbb{R}^{n+1}$ be a domain with boundary $M=\{(\rho(1+u(x)),x):x\in\mathbb{S}^n\}$, where $W_{j}(\Omega)=W_{j}(\bar{B}_{\rho})$ and $\operatorname{bar}(\Omega)=O$. Let $g$ be a non-decreasing convex $C^3$ positive function. For any $\eta>0$, there exists $\varepsilon>0$ such that if $\|u\|_{W^{2,\infty}}<\varepsilon$, then 
	\begin{equation}\label{stability gsk rn}
		\begin{aligned}
			&{\int_M g(\Phi)\sigma_{k}d\mu-\int_{\partial\bar{B}_{\rho}} g(\Phi)\sigma_{k}d\mu}\\ \geq& \left[\binom{n}{k}\frac{n((n-k)(k-j)g(\Phi(\rho))+(2k-j-1)\rho^{2}g'(\Phi(\rho)))}{4\operatorname{Area}(\mathbb{S}^n) \rho^{n+k+2}}-\eta\right]{\alpha}^{2}(\Omega).
		\end{aligned}
	\end{equation}

\end{theorem}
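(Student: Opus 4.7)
The plan is to adapt the Fuglede-type second-order expansion used by Glaudo \cite{G22} and Zhou-Zhou \cite{ZZ23}, now with the extra weight $g(\Phi)$ and the general quermassintegral constraint $W_j(\Omega)=W_j(\bar B_\rho)$. After rescaling we may assume $\rho=1$, so $M=\{(1+u(x),x):x\in\mathbb{S}^n\}$ with $\|u\|_{W^{2,\infty}}<\varepsilon$. The central analytic object is the deficit
$$D(u):=\int_M g(\Phi)\sigma_k\,d\mu - \int_{\partial\bar B_1}g(\Phi)\sigma_k\,d\mu,$$
coupled with the side constraint $W_j(\Omega)-W_j(\bar B_1)=0$ and the barycenter condition.

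First I would expand $D(u)$ and $W_j(\Omega)-W_j(\bar B_1)$ to second order in $u$. Writing
$$D(u)=L(u)+Q(u)+R_1(u),\qquad W_j(\Omega)-W_j(\bar B_1)=L_j(u)+Q_j(u)+R_2(u)=0,$$
the $W^{2,\infty}$-smallness of $u$ allows one to bound the remainders $|R_i(u)|\leq C\varepsilon\|u\|_{H^1}^2$, which is essential because $\sigma_k$ depends nonlinearly on $\nabla^2 u$. The integrand $g(\Phi(1+u))\sigma_k(\kappa)$ is expanded using the standard formulas for $\kappa_i[u]$ and $d\mu[u]$ on a radial graph over $\mathbb{S}^n$, together with the Euclidean identities $\Phi(r)=\tfrac12 r^2$ and $V=r\partial_r$. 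Decompose $u=\sum_{m,i}a_{m,i}Y_{m,i}$ in spherical harmonics. The barycenter condition forces $|a_{1,i}|\leq C\|u\|_{L^2}^2$, and the constraint $W_j(\Omega)=W_j(\bar B_1)$, once the linear part $L_j$ is analyzed, forces $|a_{0,1}|\leq C\|u\|_{L^2}^2$; hence the $m=0,1$ modes contribute only at third order.

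Next I would introduce the Lagrange-multiplier combination $Q(u)-\lambda\, Q_j(u)$, choosing $\lambda$ so that $L(u)-\lambda L_j(u)\equiv 0$ (this is possible because, on the sphere, both linear functionals are explicit integrals against $u$). Using the eigenvalue identity $\Delta_{\mathbb{S}^n}Y_m=-m(m+n-1)Y_m$, the reduced quadratic form becomes diagonal in the harmonic basis. On each mode $m\geq 2$ it is a positive multiple of $a_{m,i}^2$, whose coefficient collects a "pure curvature" contribution (weighted by $g(\Phi(1))$) and a "weight-derivative" contribution produced by integrating by parts the terms $\bar\nabla g(\Phi)\cdot\bar\nabla u$ (weighted by $g'(\Phi(1))$). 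Using that $m(m+n-1)\geq 2n$ for $m\geq 2$ together with the explicit combinatorics in $n,k,j$, the net coefficient in front of $\|u_{\geq 2}\|_{L^2}^2$ matches
$$\binom{n}{k}\frac{n\bigl((n-k)(k-j)g(\Phi(1))+(2k-j-1)g'(\Phi(1))\bigr)}{4\operatorname{Area}(\mathbb{S}^n)},$$
which, after undoing the scaling, produces exactly the constant in \eqref{stability gsk rn}. Finally, a standard estimate (as in \cite{G22,ZZ23}) yields $\alpha(\Omega)\leq C\|u\|_{L^2(\mathbb{S}^n)}$ once the $m=0,1$ modes are controlled as above, so $\alpha^2(\Omega)$ can be absorbed into the $L^2$ norm up to the small loss $\eta$ that accounts for the $\varepsilon$-remainders.

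The main obstacle is the precise second-order expansion of $\int_M g(\Phi)\sigma_k\,d\mu$ for arbitrary $k$ and arbitrary non-decreasing convex weight $g$. The weight interacts with $\sigma_k$ both through pointwise multiplication and, after one integration by parts, through the Minkowski-type identities that distribute derivatives of $g(\Phi)$ onto curvature terms; keeping track of these contributions uniformly in $-1\leq j<k\leq n$ so that the coefficient $(n-k)(k-j)g(\Phi(\rho))+(2k-j-1)\rho^2 g'(\Phi(\rho))$ emerges cleanly is the most delicate part of the argument. Once this expansion and the harmonic reduction are in place, the remaining steps—bounding $\alpha$ by $\|u\|_{L^2}$, and absorbing the $W^{2,\infty}$-error into $\eta$—are routine.
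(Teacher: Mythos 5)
Your overall strategy — second-order Taylor expansion of the weighted curvature integral and the quermassintegral constraint, elimination of the linear term via the constraint, control of the $m=0,1$ spherical-harmonic modes using the constraint and barycenter conditions, a Poincar\'e estimate $\|\nabla u\|_{L^2}^2\geq 2(n+1)\|u\|_{L^2}^2+O(\varepsilon)\|u\|_{L^2}^2$, and conversion to the Fraenkel asymmetry via $\alpha^2(\Omega)\lesssim \|\nabla u\|_{L^2}^2$ — is the same route the paper takes, and the Lagrange-multiplier bookkeeping you propose is equivalent to the paper's direct substitution of the constraint identity $\int_{\mathbb S^n}u\,dA = -\tfrac{n-j-1}{2}\int u^2 - \tfrac{j+1}{2n}\int|\nabla u|^2 + O(\varepsilon)(\cdots)$ into the expansion of $\int_M g(\Phi)\sigma_k\,d\mu$.

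However, the very first reduction ``after rescaling we may assume $\rho=1$'' is not valid here, and your subsequent claim that the case $\rho=1$ can be converted to general $\rho$ ``after undoing the scaling'' cannot be justified. The weighted integral $\int_M g(\Phi)\sigma_k\,d\mu$ is scaling invariant only when $g$ is homogeneous (e.g.\ $g(s)=s^\alpha$); the paper explicitly points this out in the remark following the theorem, where it notes that only for homogeneous $g$ can one reduce to $\bar B_1$ and a rescaled asymmetry. For a general non-decreasing convex $g$, the target constant $\binom{n}{k}\tfrac{n((n-k)(k-j)g(\Phi(\rho))+(2k-j-1)\rho^2 g'(\Phi(\rho)))}{4\operatorname{Area}(\mathbb S^n)\rho^{n+k+2}}$ is genuinely a function of $\rho$ through $g(\Phi(\rho))$ and $g'(\Phi(\rho))$, so there is no dilation map turning the $\rho=1$ estimate into the $\rho\neq 1$ one. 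The fix is straightforward — simply carry $\rho$ through the entire expansion, as the paper does in its formula \eqref{expression g sigmak rn} with $\phi(r)=r$ and $\Phi=\tfrac12 r^2$ — but as written your argument only yields the result at $\rho=1$ and the claimed scaling step fails.

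A smaller point: to obtain the sharp numerical coefficient rather than merely a positive one, you need the sharp eigenvalue bound $\lambda_m = m(m+n-1)\geq 2(n+1)$ for $m\geq 2$ (used to show the $u^2$ term costs at most half the $|\nabla u|^2$ term). You quote the weaker bound $m(m+n-1)\geq 2n$, which would degrade the constant; this should be corrected to $2(n+1)$.
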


\begin{remark}
	Notice that when $g$ is a homogeneous function, i.e., $g(s)=s^{\alpha}$ $(\alpha=0\textit{~or~}\alpha\geq1)$, since the weighted curvature integrals are scaling invariant, one can restrict the stability result to the domain $\Omega$ which is close to the unit ball $\bar{B}_{1}$ and the stability is characterized using a rescaled version of the Fraenkel asymmetry, defined as 
	$$
	\bar{\alpha}(\Omega)=\inf \left\{\frac{\operatorname{Vol}\left(\Omega \Delta \bar{B}_\rho(x)\right)}{\operatorname{Vol}\left(\bar{B}_\rho (x)\right)}: x \in \mathbb{R}^{n+1}, \operatorname{Vol}(\Omega)=\operatorname{Vol}\left(\bar{B}_\rho (x)\right)\right\}.
	$$
	Theorem \ref{thm gsk rn} generalizes several special cases:
	\begin{itemize}
		\item[(i)] For $g \equiv 1$, inequality (\ref{stability gsk rn}) reduces to the quantitative Alexandrov-Fenchel inequality established by VanBlargan-Wang \cite[Theorem 1.3]{VW24};
		\item[(ii)] For $g(s) = s$, we recover the quantitative inequality proved by Zhou-Zhou \cite[Theorem 1.5]{ZZ23}.
	\end{itemize}
\end{remark}

We mention that the Fraenkel asymmetry ${\alpha}(\Omega)$ is a well-studied quantity in stability analysis. The sharp quantitative isoperimetric inequality involving the Fraenkel asymmetry, with optimal exponent $2$, was first established by Fusco-Maggi-Pratelli \cite{FMP08} through symmetrization arguments. Subsequently, Figalli-Maggi-Pratelli \cite{FMP10} extended this optimal result to the anisotropic perimeter by using mass transportation theory. For further reading on the quantitative isoperimetric inequalities and other weighted geometric inequalities in the quantitative forms, we refer to \cite{F15,FM23,GMP20,M08}.

One might conjecture that inequality (\ref{weight gH volume ineq}) holds in a $C^1$ neighborhood of the ball without any curvature assumption and without replacing $H$ with $H^{+}$. However, this is not true, as demonstrated by Glaudo \cite[Appendix A]{G22} in the following result: 

\begin{mainthm}[\cite{G22}]
	Given $n\geq2$, for any $\varepsilon>0$, there is a domain $\Omega\subset\mathbb{R}^{n+1}$ with smooth boundary $M=\{(1+u(x),x):x\in \mathbb{S}^n\}$, where $\|u\|_{C^1}<\varepsilon$, such that 
	\begin{equation*}
		\int_{M} H d\mu < -1.
	\end{equation*}
\end{mainthm}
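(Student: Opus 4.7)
The plan is to perturb the unit sphere by many small, pairwise-disjoint, outward radial bumps. Each bump lowers $\int_M H\,d\mu$ by a strictly negative amount of order $a^3\delta^{n-1}$ (where $a$ and $\delta$ are height- and width-scales), while adding $O(a)$ to $\|u\|_{C^1}$; packing $N\sim\delta^{-n}$ of them will drive $\int_M H\,d\mu$ below $-1$ with $\|u\|_{C^1}$ as small as one wishes.

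Fix a nonnegative radial profile $\rho\in C_c^\infty(B_1(0)\subset\mathbb{R}^n)$ with $\rho(0)=1$ and $\rho'(s)<0$ on $(0,1)$, and introduce small parameters $0<\delta\ll a\ll 1$ to be chosen. Select a maximal $3\delta$-separated family $\{x_j\}_{j=1}^N\subset\mathbb{S}^n$, so that $N\geq c_n\delta^{-n}$. In geodesic normal coordinates $y$ around each $x_j$, define
\[
u(x):=\sum_{j=1}^{N} a\delta\,\rho\!\left(\frac{d_{\mathbb{S}^n}(x,x_j)}{\delta}\right),
\]
with the summands having disjoint supports inside $B_\delta(x_j)$. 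Direct chart estimates give $\|u\|_{C^0}\leq a\delta$ and $\|u\|_{C^1}\leq C_0 a$, so I first fix $a=\varepsilon/(2C_0)$, making $\|u\|_{C^1}<\varepsilon$ uniformly in $\delta$.

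The key quantitative step is to evaluate the contribution of a single bump. Over $B_\delta(x_j)$, the piece of $M=\{(1+u(x))x:x\in\mathbb{S}^n\}$ agrees, to leading order in the chart, with the flat graph $z=u_j(y):=a\delta\,\rho(|y|/\delta)$ in $\mathbb{R}^{n+1}$; the sphere metric $\delta_{kl}+O(|y|^2)$ contributes only multiplicative $1+O(\delta^2)$ errors. For any $v\in C_c^\infty(\mathbb{R}^n)$, combining $H=\operatorname{div}(\nabla v/\sqrt{1+|\nabla v|^2})$ with $d\mu=\sqrt{1+|\nabla v|^2}\,dy$ and one integration by parts yields
\[
\int H\,d\mu=-\int\frac{(\nabla v)^\top\nabla^2 v\,(\nabla v)}{1+|\nabla v|^2}\,dy,
\]
which, after expanding the denominator and one further integration by parts, equals $\tfrac{1}{2}\int|\nabla v|^2\,\Delta v\,dy$ up to higher-order terms in $\|\nabla v\|_\infty$. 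Specializing to the radial $v=u_j$ and integrating once more radially gives
\[
\int_{\mathrm{graph}\,u_j}H\,d\mu=-C_\rho\,a^3\delta^{n-1}+O(a^5\delta^{n-1}),\qquad C_\rho:=-\frac{(n-1)\omega_{n-1}}{3}\int_0^\infty(\rho'(s))^3 s^{n-2}\,ds>0,
\]
with $C_\rho>0$ because $\rho'<0$ on a set of positive measure. The linear-in-$u$ pieces of the global expansion of $H\,d\mu$ on $\mathbb{S}^n$ integrate, via $\int_{\mathbb{S}^n}\Delta_{\mathbb{S}^n} u\,d\sigma=0$ and $\int_{\mathbb{S}^n} u\,d\sigma\lesssim a\delta$, to a quantity of size $O(a\delta)$, negligible compared with $a^3/\delta$.

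Summing over all $N\geq c_n\delta^{-n}$ bumps and adding the unperturbed value $\int_{\mathbb{S}^n}n\,d\sigma=n\omega_n$,
\[
\int_M H\,d\mu\;\leq\;n\omega_n-c_n C_\rho\,\frac{a^3}{\delta}+O(a\delta).
\]
With $a$ already fixed so that $\|u\|_{C^1}<\varepsilon$, I finally choose $\delta$ small enough that $c_n C_\rho a^3/\delta$ exceeds $n\omega_n+1$; this produces $\int_M H\,d\mu<-1$, completing the construction. The hard part will be verifying rigorously that the flat-graph leading asymptotic $-C_\rho\,a^3\delta^{n-1}$ per bump survives the passage to a genuine radial graph over $\mathbb{S}^n$, and that the cumulative lower-order errors produced by summing $N\sim\delta^{-n}$ bumps stay strictly smaller than $a^3/\delta$; both points reduce to a careful pointwise expansion of $H$ and of the area element in geodesic normal coordinates together with the compact-support divergence identity used above.
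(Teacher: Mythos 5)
The paper does not reprove this statement (it is quoted from \cite{G22}, Appendix A), so I am judging your construction on its own terms. The mechanism you propose --- many localized radial perturbations of amplitude $a\delta$ and width $\delta$, so that $\|u\|_{C^1}=O(a)$ stays small while the cubic term involving $\nabla^2u[\nabla u,\nabla u]$ accumulates to size $a^3/\delta$ --- is exactly the right one, and your bookkeeping of the lower-order terms is plausible. However, there is a genuine sign error at the heart of the per-bump computation. You use $H=\operatorname{div}\bigl(\nabla v/\sqrt{1+|\nabla v|^2}\bigr)$ for the bump graphs, which is the convention attached to the normal for which the round sphere has $H=-n$; yet you simultaneously add the unperturbed value $+n\omega_n$, which belongs to the outward-normal convention of the statement and of the paper's exact formulas (\ref{H formula1}), (\ref{gH int}), where in the Euclidean case
\begin{equation*}
H\,d\mu=\Bigl[n\phi^{n-1}-\phi^{n-2}\Delta u+\tfrac{\phi^{n-1}|\nabla u|^2}{D^2}+\tfrac{\phi^{n-2}}{D^2}\,\nabla^2u[\nabla u,\nabla u]\Bigr]dA ,
\end{equation*}
with a \emph{plus} sign on the cubic term. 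With this (outward-normal) convention, a single outward radial bump with decreasing profile ($u_s\le 0$) contributes
\begin{equation*}
\int \nabla^2u[\nabla u,\nabla u]\,dA=\omega_{n-1}\int_0^\delta u_{ss}u_s^2\sin^{n-1}s\,ds=-\tfrac{n-1}{3}\,\omega_{n-1}\!\int_0^\delta u_s^3\,s^{n-2}ds\,(1+O(\delta^2))>0,
\end{equation*}
i.e.\ $+C_\rho a^3\delta^{n-1}$ per bump, so your construction as written drives $\int_M H\,d\mu$ to $+\infty$, not below $-1$ (consistent with the geometric fact that outward spikes increase total mean curvature). The two halves of your argument thus use opposite orientations, and the central estimate proves the wrong inequality.

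The repair is immediate, because the cubic term is odd in $u$: take inward dents, i.e.\ replace $u$ by $-u$ (keep $\rho\ge 0$ decreasing and set $u=-a\delta\sum_j\rho(d(x,x_j)/\delta)$). Then each dent contributes $-C_\rho a^3\delta^{n-1}$, and your scheme --- fix $a\sim\varepsilon$ first so $\|u\|_{C^1}<\varepsilon$, then let $\delta\to 0$ until $c_nC_\rho a^3/\delta>n\omega_n+1$ --- goes through. For the deferred ``hard part'' I would also advise abandoning the flat-chart approximation entirely and working with the exact identity displayed above: the linear piece integrates to $O(a\delta)$, the two quadratic pieces are $O(a^2)$ uniformly in $\delta$ (integrate $-\phi^{n-2}\Delta u$ by parts), and since your perturbation is radial about each $x_j$ the spherical Hessian contraction is exactly $u_{ss}u_s^2$, so the cubic term can be computed on $\mathbb{S}^n$ with only a multiplicative $1+O(a+\delta^2)$ error and no chart errors to track when summing the $N\sim\delta^{-n}$ dents.
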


Additionally, Chodosh-Eichmair-Koerber \cite{CEK23} recently provided a counterexample to the Minkowski inequality for closed surfaces that are $W^{2,p}\cap C^1$-close to the sphere. This shows that the Minkowski inequality fails even for perturbations of a sphere that are small in $W^{2, p} \cap C^1$ unless additional convexity assumptions are imposed. This further illustrates that the regularity condition in Theorem \ref{thm gsk rn} is optimal unless additional convexity assumptions are imposed. Specifically, they proved the following result:

\begin{mainthm}[\cite{CEK23}]
	For any $\varepsilon>0$, there is a domain $\Omega\subset\mathbb{R}^{3}$ with smooth boundary $M=\{(1+u(x),x):x\in \mathbb{S}^n\}$, where $\|u\|_{W^{2,p}\cap C^1}<\varepsilon$ for every $p\in[1,\infty)$, such that 
	\begin{equation*}
		\int_{M} H d\mu < \sqrt{16\pi |M|}.
	\end{equation*}
\end{mainthm}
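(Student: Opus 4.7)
The plan is to construct, for each $\varepsilon > 0$, a smooth radial perturbation $u$ of the unit sphere in $\mathbb{R}^3$ with $\|u\|_{W^{2,p}\cap C^1} < \varepsilon$ for every $p\in[1,\infty)$, whose graph $M = \{(1+u(x))x:x\in\mathbb{S}^2\}$ violates $\int_M H\,d\mu \ge \sqrt{16\pi|M|}$. On the round sphere both sides equal $8\pi$, and one checks directly that the first variation of the deficit $\mathcal{D}(u):=\int_M H\,d\mu - \sqrt{16\pi|M|}$ vanishes, so the strategy is to identify a direction along which $\mathcal{D}$ strictly decreases at second order and to exploit it through a highly localized perturbation. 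The crucial feature that makes the construction compatible with $W^{2,p}\cap C^1$ smallness (but not with $W^{2,\infty}$ smallness, which would force Theorem \ref{thm gsk rn} to apply) is that the Hessian of $u$ can be made very large in $L^\infty$ but concentrated on sets of small measure, and hence small in every finite $L^p$.

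First, I would perform a second-order expansion of $\mathcal{D}$ in $u$, using $H = 2 - \Delta_{\mathbb{S}^2}u - 2u + O(u^2)$ together with the standard expansions of the area element and of $|M|$. After the first-order cancellation, one obtains a quadratic form
\[
	Q(u) = \int_{\mathbb{S}^2}\bigl(a(\Delta u)^2 + b|\nabla u|^2 + c\,u^2\bigr)\,d\mu_{\mathbb{S}^2}
\]
with explicit coefficients $a,b,c$. Evaluating $Q$ on spherical harmonics $Y_\ell$ reduces the form to a scalar function of the eigenvalue $\ell(\ell+1)$; the central analytical claim, which distinguishes the Minkowski setting from the isoperimetric one where Fuglede-type stability gives positivity, is that this scalar is negative for some $\ell \ge 2$, so there exists a ``bad'' radial bump direction $\psi$ with $Q(\psi)<0$.

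Next, I would distribute many small copies of the bad bump. Take $u(x) = A\sum_{i=1}^N \psi(\mathrm{dist}(x,x_i)/r)$, with $x_1,\dots,x_N$ well-separated on $\mathbb{S}^2$. Disjointness of supports yields $\mathcal{D}(u)\sim -c NA^2$ (independent of $r$ after accounting for the bump-scaling of the derivatives appearing in $Q$), while $\|u\|_{C^1}\lesssim A/r$ and $\|u\|_{W^{2,p}}^p\lesssim NA^p r^{2-2p}$. Choosing $A=r^{1+\delta}$ for some $\delta>0$ and $N \sim r^{-2(1+\delta)}$ to keep $NA^2$ bounded below, then tuning $\delta$ close to (but less than) $1$ and $r$ small, one satisfies simultaneously $\|u\|_{C^1}<\varepsilon$ and $\|u\|_{W^{2,p}}<\varepsilon$ for the prescribed $p$. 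A diagonal procedure over an exhausting sequence $p_k\to\infty$, with parameters $(r_k,\delta_k)$ chosen so that all constraints for $p \le p_k$ are met, produces a single $u$ satisfying the norm bounds for every finite $p$ while maintaining $\mathcal{D}(u)<0$.

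The main technical obstacle is the indefiniteness of $Q$: one must compute the coefficients $a,b,c$ explicitly and verify negativity in a nontrivial spherical-harmonic direction; this is the heart of the argument and is where Glaudo's counterexample mechanism from \cite{G22} is quantitatively sharpened. Additionally, in the localization step one must control the cubic and higher remainders in the expansion of $\mathcal{D}$: these are of order $O(\|u\|_{C^0}\|u\|_{W^{2,2}}^2)$ per bump and, because the Hessian is highly concentrated, they threaten to overwhelm the quadratic gain unless the amplitude-to-scale ratio $A/r$ is kept small enough. A careful calibration of $(A,r,N,\delta)$ and a profile $\psi$ with sharply annular Hessian concentration, together with ensuring the diagonal selection respects all constraints uniformly in $p$, constitutes the delicate part of the proof.
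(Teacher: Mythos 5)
The paper itself offers no proof of this statement (it is quoted from \cite{CEK23}), so I can only assess your construction on its merits, and it has a fatal gap: the ``central analytical claim'' that the second variation $Q$ of the deficit $\int_M H\,d\mu-\sqrt{16\pi|M|}$ is negative on some spherical harmonic mode $\ell\ge 2$ is false. If $Q(Y_\ell)<0$ for some $\ell\ge2$, then $u=tY_\ell$ (corrected by $O(t^2)$ adjustments in the $\ell=0,1$ modes to enforce the area/barycenter normalizations) would be a $W^{2,\infty}$-small violation of the Minkowski inequality, contradicting the quantitative stability results for nearly spherical sets in $W^{2,\infty}$ — precisely \cite{VW24} and Theorem \ref{thm gsk rn} of this paper with $g\equiv1$, $k=1$. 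Note also that in the exact expansion of $\int_M H\,d\mu$ (cf.\ (\ref{gPhiH int}) with $K=0$, $g\equiv1$) the Hessian of $u$ enters only linearly (and through the cubic term $\nabla^2u[\nabla u,\nabla u]/D^2$); after integration by parts the genuine quadratic form involves only $u^2$ and $|\nabla u|^2$, there is no $a(\Delta u)^2$ term, and this form is nonnegative modulo the $\ell=0,1$ invariance modes. So the entire mechanism you propose — localize a negative \emph{quadratic} direction into many small bumps — cannot produce a counterexample; it is exactly the regime in which the inequality is known to hold.

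The true mechanism in \cite{CEK23} (and already in Glaudo's cruder $C^1$ counterexample \cite{G22}) is non-quadratic: one must exploit perturbations whose Hessian is unbounded in $L^\infty$ (though small in every $L^p$), so that the surface develops concentrated regions of very negative mean curvature and the cubic/higher-order terms — the very terms you propose to suppress by keeping $A/r$ small — dominate and drag $\int_M H\,d\mu$ below the threshold. Your parameter calibration also does not close even on its own terms: disjointly supported bumps of radius $r$ on $\mathbb{S}^2$ force $N\lesssim r^{-2}$, which is incompatible with your choice $N\sim r^{-2(1+\delta)}$, $\delta>0$; and making $\|u\|_{W^{2,p}}$ small with $A=r^{1+\delta}$ requires $\delta>p/(p-2)$, which contradicts both the packing bound and ``$\delta$ close to but less than $1$'', so the claimed simultaneous smallness in $C^1$ and all $W^{2,p}$ with a bounded-below gain $NA^2$ is arithmetically impossible. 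In short, both the key lemma and the scaling scheme fail; a correct proof must be built around curvature concentration producing negative $H$ at leading order, not around an indefinite second variation.
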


The paper is organized as follows: In Section \ref{sec2}, we  review fundamental properties of elementary symmetric functions and present preliminary results for nearly spherical sets in space forms. In Section \ref{sec3}, we focus on the weighted Minkowski type inequalities in space forms and prove Theorems \ref{gH volume thm} and \ref{gH weight volume thm}. In Sections \ref{sec4} and \ref{sec5}, we study the validity and the stability of weighted quermassintegral inequalities in both $\mathbb{R}^{n+1}$ and $\mathbb{H}^{n+1}$, and prove Theorems \ref{gsigmak weight volume thm}--\ref{thm gsk rn}.

\section{Preliminaries}\label{sec2}
In this section, first let us recall some basic definitions and properties of elementary symmetric functions.

For $1\leq k \leq n$, let $\sigma_{k}$ be the $k$th elementary symmetry polynomial $\sigma_{k}:\mathbb{R}^{n}\rightarrow\mathbb{R}$ defined by
$$
\sigma_k(\lambda)=\sum_{i_1<\cdots<i_k} \lambda_{i_1} \cdots \lambda_{i_k} \text { for } \lambda=\left(\lambda_1, \cdots, \lambda_{n}\right) \in \mathbb{R}^{n}.
$$
As a convention, we take $\sigma_0 (\lambda) = 1$. For a symmetric $n \times n$ matrix $A=\left(A_i^j\right)$, we set
\begin{equation}\label{sgmk}
	\sigma_k(A)=\frac{1}{k !} \delta_{j_1 \cdots j_k}^{i_1 \cdots i_k} A_{i_1}^{j_1} \cdots A_{i_k}^{j_k},
\end{equation}
where $\delta_{j_1 \cdots j_k}^{i_1 \cdots i_k}$ is the generalized Kronecker symbol.
If $\lambda(A)=\left(\lambda_1(A), \cdots, \lambda_n(A)\right)$ are the real eigenvalues of $A$, then 
$$
\sigma_k(A)=\sigma_k(\lambda(A)) .
$$
The $k$th Newton transformation is defined as follows
$$
\left[T_k\right]_i^j(A)=\frac{\partial \sigma_{k+1}}{\partial A_j^i}(A)=\frac{1}{k !}\delta_{i i_1 \cdots i_{k}}^{j j_1 \cdots j_{k}} A_{j_1}^{i_1} \cdots A_{j_{k}}^{i_{k}}.
$$
If $A$ is a diagonal matrix with $\lambda(A)=\left(\lambda_1, \cdots, \lambda_n\right)$, then
$$
\left[T_k\right]_i^j(A) = \frac{\partial \sigma_{k+1}}{\partial \lambda_i}(\lambda)\delta_{i}^{j}.
$$

Next, we collect some well-known results of nearly spherical sets parametrized by radial function in space forms $N^{n+1}(K)$.

Let $\Omega$ be a smooth, bounded domain in $N^{n+1}(K)$ that is star-shaped with respect to the origin $O$ and enclosed by $M$. We parametrize $M$ via the radial function $M=\left\{(\rho(1+u(x)), x): x \in \mathbb{S}^n\right\}$, where $u: \mathbb{S}^n \rightarrow(-1,+\infty)$ is a smooth function.

In geodesic polar coordinates of $N^{n+1}(K)$, let $\left\{\frac{\partial}{\partial \theta_1}, \frac{\partial}{\partial \theta_2}, \ldots, \frac{\partial}{\partial \theta_n}, \frac{\partial}{\partial r}\right\}$ denote the tangent basis and $s_{i j}$ denote the canonical metric on $\mathbb{S}^n$. Then
$$
\left\langle\frac{\partial}{\partial \theta_i}, \frac{\partial}{\partial r}\right\rangle=0, \quad \left\langle\frac{\partial}{\partial r}, \frac{\partial}{\partial r}\right\rangle=1, \quad \left\langle\frac{\partial}{\partial \theta_i}, \frac{\partial}{\partial \theta_j}\right\rangle=\phi^2(r) s_{i j} .
$$
The tangent basis of $M$ is given by 
$$e_i=\frac{\partial}{\partial \theta_i}+\rho u_i \frac{\partial}{\partial r}, \quad i=1,2, \ldots, n,$$
where $u_i=\frac{\partial u}{\partial \theta_i}$. For convenience, we denote
$$
r=\rho(1+u),\quad \phi=\phi(r), \quad \phi^{\prime}=\phi^{\prime}(r), \quad D=\sqrt{\phi^2+\rho^2|\nabla u|^2},
$$
where $\nabla$ is the Levi-Civita connection on $\mathbb{S}^n$ and $|\nabla u|^2=s^{i j} u_i u_j$. Then the induced metric $g_{i j}$ on $M$ is
$$
g_{i j}=\rho^2 u_i u_j+\phi^2 s_{i j}.
$$
Thus the area element $\mathrm{d} \mu$ corresponding to the induced metric $g$ is
\begin{equation}\label{area element}
	d \mu=\sqrt{\operatorname{det}\left(g_{i j}\right)} dA=\phi^{n-1} D d A,
\end{equation}
the inverse of $\left(g_{i j}\right)$ is given by
$$
g^{i j}=\frac{s^{i j}}{\phi^2}-\frac{1}{\phi^2} \cdot \frac{\rho^2 u_k u_l s^{i k} s^{j l}}{D^2},
$$
and the outward unit normal vector $N$ on $M$ is
$$
N=\frac{-\rho s^{i j} u_i \frac{\partial}{\partial \theta_j}+\phi^2 \frac{\partial}{\partial r}}{\phi D}.
$$
Denote $h_{i j}$ as the second fundamental form on $M$. Using the definition $h_{i j}=-\left\langle\bar{\nabla}_{e_i} e_j, N\right\rangle$, then we have
$$
h_{i j}=\frac{1}{D}\left[2 \phi^{\prime} \rho^2 u_i u_j+\phi^2 \phi^{\prime} s_{i j}-\phi \rho u_{i j}\right],
$$
and the Weingarten tensor $h^i_j=g^{i k} h_{k j}$ is
$$
h_j^i=\frac{\phi^{\prime} \delta_j^i}{D}-\frac{\rho u_j^i}{D \phi}+\frac{\phi^{\prime} \rho^2 u^i u_j}{D^3}+\frac{\rho^3 u^i u_k u_j^k}{D^3 \phi},
$$
where $u^i=s^{i j} u_j, u_j^k=s^{k i} u_{i j}$. The mean curvature $H$ can be expressed as 
\begin{align}
	H & = \frac{n\phi'}{D} - \frac{\rho \Delta u}{D\phi} +\frac{\phi' \rho^2 |\nabla u|^2}{D^3} + \frac{\rho^3 \nabla^2 u \left[\nabla u, \nabla u\right]}{D^3 \phi} \label{H formula1}\\
	& = \frac{n\phi'\phi^2 + \phi'\rho^2 |\nabla u|^2}{\phi^2 D} - \frac{\rho}{\phi^2} \operatorname{div} \left(\frac{\phi}{D} \nabla u\right).\label{H formula}
\end{align}
The volume and the weighted volume of $\Omega$ are given by
	\begin{align}
		\operatorname{Vol}(\Omega) & = \int_{\mathbb{S}^n} \left(\int_{0}^{\rho(1+u)} \phi^n (r) dr\right) dA,\label{volume}\\
		\int_{\Omega} \phi^{\prime} d v & =\frac{1}{n+1} \int_{\mathbb{S}^n} \phi^{n+1}(\rho(1+u)) dA.\label{weight volume}
	\end{align}
	
\begin{lemma}
	Let $\Omega\subset N^{n+1}(K)$ $(K=-1,0,1)$ be a domain with boundary $M=\{(\rho(1+u(x)), x): x \in \mathbb{S}^n\}$. Let $g$ be a $C^3$ positive function defined on $[0,\infty)$. Suppose $\|u\|_{C^1} < \varepsilon$, then we have
		\begin{align}
			&\int_{M} g(\Phi)H d\mu \notag\\=  &\int_{\mathbb{S}^n}n\phi^{n-1}(\rho){\phi'}(\rho)g(\Phi(\rho)) dA\notag \\ &+ \int_{\mathbb{S}^n}n\left\{\left[(n-1)\phi^{n-2}(\rho){\phi'}^{2}(\rho)- K\phi^n (\rho)\right]g(\Phi(\rho)) + \phi^n (\rho)\phi'(\rho)g'(\Phi(\rho))\right\} \rho u dA \notag\\ &+ \int_{\mathbb{S}^n}{n}\left\{\left[\frac{(n-1)(n-2)}{2}\phi^{n-3}(\rho) {\phi'}^{3}(\rho) + K(1-\frac{3n}{2})\phi^{n-1} (\rho) {\phi'} (\rho)\right]g(\Phi(\rho))\right. \notag\\&\left.+ \phi^{n-1} (\rho) {\phi'} (\rho)\left[\frac{1}{2}g''(\Phi(\rho))\phi^{2}(\rho) + (n-\frac{1}{2})g'(\Phi(\rho)){\phi'} (\rho)\right]\right.\label{gPhiH int}\\&\left. - K\phi^{n+1}(\rho)g'(\Phi(\rho))\right\} \rho^2 u^2 dA\notag\\
			& + \int_{\mathbb{S}^n} \left[(n-1) \phi^{n-3}(\rho){\phi'} (\rho) g(\Phi(\rho)) + \phi^{n-1}(\rho)g'(\Phi(\rho))\right] \rho^2 |\nabla u|^2 dA \notag\\ &+ \int_{\mathbb{S}^n} \frac{\phi^{n-2} g(\Phi) \rho^3}{D^2} \nabla^2 u \left[\nabla u, \nabla u\right] dA \notag\\
			& + O(\varepsilon)\|u\|_{L^2\left(\mathbb{S}^n\right)}^2+O(\varepsilon)\|\nabla u\|_{L^2\left(\mathbb{S}^n\right)}^2.\notag
		\end{align}
	
\end{lemma}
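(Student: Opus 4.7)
The plan is to substitute the explicit mean curvature formula \eqref{H formula1} into the area element $d\mu = \phi^{n-1}D\,dA$ from \eqref{area element}, so that the integrand reduces to
\begin{equation*}
g(\Phi)H\,d\mu = g(\Phi)\left[n\phi^{n-1}\phi' - \rho\phi^{n-2}\Delta u + \frac{\phi^{n-1}\phi'\rho^2|\nabla u|^2}{D^2} + \frac{\rho^3\phi^{n-2}\nabla^2 u[\nabla u,\nabla u]}{D^2}\right]dA.
\end{equation*}
The integrand splits into a purely pointwise radial part, a Laplacian-in-$u$ part, a $|\nabla u|^2$ part, and the already-cubic $\nabla^2 u[\nabla u,\nabla u]$ part. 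I would then Taylor-expand the radial factors at $r=\rho$ to second order in $u$, integrate the Laplacian piece by parts on $\mathbb{S}^n$, and collect terms.

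For the pointwise radial piece, set $F(r):=g(\Phi(r))\phi^{n-1}(r)\phi'(r)$ and compute $F(\rho)$, $F'(\rho)$, $F''(\rho)$ using $\Phi'=\phi$, $\Phi''=\phi'$, and $\phi''=-K\phi$, which holds uniformly across the three space forms. Then $nF(\rho)$ yields the first line of \eqref{gPhiH int}; $nF'(\rho)\rho u$ reproduces the coefficient $n\{[(n-1)\phi^{n-2}{\phi'}^2-K\phi^n]g(\Phi)+\phi^n\phi'g'(\Phi)\}|_{r=\rho}$ in the second line; and $\tfrac{n}{2}F''(\rho)\rho^2 u^2$ gives the third-through-fourth lines after algebraic reorganisation of the terms carrying $g(\Phi)$, $g'(\Phi)$, $g''(\Phi)$, the factor $K$, and the powers of $\phi$, $\phi'$. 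The cubic Taylor remainder, being bounded by $\varepsilon$ times $u^2$ under $\|u\|_{C^1}<\varepsilon$, is absorbed into $O(\varepsilon)\|u\|_{L^2(\mathbb{S}^n)}^2$.

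For the Laplacian piece, integrate by parts on $\mathbb{S}^n$:
\begin{equation*}
-\int_{\mathbb{S}^n} g(\Phi)\rho\phi^{n-2}\Delta u\,dA = \int_{\mathbb{S}^n}\rho^2\bigl[g'(\Phi)\phi^{n-1}+(n-2)g(\Phi)\phi^{n-3}\phi'\bigr]|\nabla u|^2\,dA,
\end{equation*}
where the spherical gradient of any scalar depending on $x$ only through $r=\rho(1+u(x))$ equals $(\,\cdot\,)'(r)\,\rho\nabla u$. Expanding $D^{-2}=\phi^{-2}(1+O(\varepsilon^2))$ in the third term contributes an extra $g(\Phi)\phi^{n-3}\phi'\rho^2|\nabla u|^2$ at leading order, and summing the two yields precisely the coefficient $(n-1)\phi^{n-3}(\rho)\phi'(\rho)g(\Phi(\rho))+\phi^{n-1}(\rho)g'(\Phi(\rho))$ in front of $\rho^2|\nabla u|^2$ asserted in \eqref{gPhiH int}. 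The fourth term is already of third order in derivatives of $u$ and is displayed unchanged.

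Every residual cross term — products of the form $u|\nabla u|^2$, $u^2|\nabla u|^2$, or the Taylor remainders in the $|\nabla u|^2$ coefficient when $\phi$, $\phi'$, $g(\Phi)$, $g'(\Phi)$ are further expanded around $r=\rho$ — has a bound depending only on $\|g\|_{C^3}$ near $\Phi(\rho)$ and on continuous functions of $\rho$. Under $\|u\|_{C^1}<\varepsilon$ these fit into the advertised $O(\varepsilon)(\|u\|_{L^2}^2+\|\nabla u\|_{L^2}^2)$ budget. The only genuine difficulty is the organizational bookkeeping of three simultaneous expansions — the second-order Taylor expansion of the radial profile $g(\Phi)\phi^a{\phi'}^b$, the integration by parts converting $g(\Phi)\rho\phi^{n-2}\Delta u$ into the $|\nabla u|^2$ integrand, and the expansion of $1/D^2$ — together with the uniform control ensuring that no leftover term escapes the remainder budget; there is no conceptual obstacle.
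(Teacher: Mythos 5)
Your proposal follows exactly the same route as the paper: substitute the radial formulas for $H$ and $d\mu$ to get the four-term integrand, Taylor-expand the pointwise radial profile $g(\Phi)\phi^{n-1}\phi'$ to second order in $u$ using $\Phi'=\phi$ and $\phi''=-K\phi$, integrate the Laplacian term by parts to produce the $(n-2)\phi^{n-3}\phi'g(\Phi)+\phi^{n-1}g'(\Phi)$ coefficient, and expand $D^{-2}=\phi^{-2}(1+O(\varepsilon^2))$ in the gradient term to contribute the extra $\phi^{n-3}\phi'g(\Phi)$, with all remainders absorbed into $O(\varepsilon)(\|u\|_{L^2}^2+\|\nabla u\|_{L^2}^2)$. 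The argument is correct and essentially identical to the paper's.
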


\begin{proof}
	Starting from the expressions for the area element (\ref{area element}) and mean curvature (\ref{H formula1}), we obtain the identity
	\begin{equation}\label{gH int}
		\begin{aligned}
			\int_{M}g(\Phi)H d\mu = \int_{\mathbb{S}^n}&\left[n\phi^{n-1}\phi' g(\Phi) - \rho\phi^{n-2}g(\Phi)\Delta u + \frac{\phi^{n-1}\phi'g(\Phi)\rho^{2}}{D^2}|\nabla u|^2\right.\\&\left. + \frac{\phi^{n-2}g(\Phi)\rho^3}{D^2}\nabla^2 u \left[\nabla u, \nabla u\right]\right] dA,
		\end{aligned}
	\end{equation}
	where $r=\rho(1+u)$, $\phi=\phi(r)$, $\phi'=\phi'(r)$, $D=\sqrt{\phi^2 (r) + \rho^2 |\nabla u|^2}$ can all be seen as functions of two independent variables $u$ and $|\nabla u|^2$. 
	
	We proceed with Taylor expansion at $u=|\nabla u|^2=0$, using the assumption $\|u\|_{C^1} < \varepsilon$ to control the remainder terms and the fact $\phi''=-K\phi$, which yield that the first term expands as
		\begin{align*}
			&\int_{\mathbb{S}^n}n\phi^{n-1}{\phi'}g(\Phi) \\= &\int_{\mathbb{S}^n}n\phi^{n-1}(\rho){\phi'}(\rho)g(\Phi(\rho)) \\ &+ \int_{\mathbb{S}^n}n\left\{\left[(n-1)\phi^{n-2}(\rho){\phi'}^{2}(\rho)- K\phi^n (\rho)\right]g(\Phi(\rho)) + \phi^n (\rho)\phi'(\rho)g'(\Phi(\rho))\right\} \rho u \\ &+ \int_{\mathbb{S}^n}\frac{n}{2}\left\{\left[(n-1)(n-2)\phi^{n-3}(\rho) {\phi'}^{3}(\rho) + K(2-{3n})\phi^{n-1} (\rho) {\phi'} (\rho)\right]g(\Phi(\rho))\right. \\&\left.+ \phi^{n-1} (\rho) {\phi'} (\rho)\left[g''(\Phi(\rho))\phi^{2}(\rho) + (2n-1)g'(\Phi(\rho)){\phi'} (\rho)\right] - 2K\phi^{n+1}(\rho)g'(\Phi(\rho))\right\} \rho^2 u^2\\& + O(\varepsilon)\|u\|_{L^2\left(\mathbb{S}^n\right)}^2.
		\end{align*}
		Integrating by parts, the Laplacian term gives
		\begin{align*}
			-\int_{\mathbb{S}^n}\rho\phi^{n-2}g(\Phi)\Delta u = & \int_{\mathbb{S}^n} \left((n-2) \phi^{n-3}{\phi'}g(\Phi) + \phi^{n-1}g'(\Phi)\right) \rho^2 |\nabla u|^2
			\\= &\int_{\mathbb{S}^n}\left((n-2) \phi^{n-3}(\rho){\phi'} (\rho) g(\Phi(\rho)) + \phi^{n-1}(\rho)g'(\Phi(\rho))\right) \rho^2 |\nabla u|^2 \\&+ O(\varepsilon)\|\nabla u\|_{L^2\left(\mathbb{S}^n\right)}^2.
		\end{align*}
		The gradient term can be simplified as
		\begin{align*}
			\int_{\mathbb{S}^n}\frac{\phi^{n-1}{\phi'}g(\Phi)\rho^{2}}{D^2}|\nabla u|^2 = & \int_{\mathbb{S}^n}\phi^{n-3}(\rho){\phi'}(\rho)g(\Phi(\rho)) \rho^2 |\nabla u|^2 + O(\varepsilon)\|\nabla u\|_{L^2\left(\mathbb{S}^n\right)}^2.
		\end{align*}

	Putting the above estimates together, we get the formula (\ref{gPhiH int}).
	
\end{proof}

\begin{lemma}\label{lem expression g sigmak}
	Let $\Omega\subset N^{n+1}(K)$ $(K=-1,0,1)$ be a domain with boundary $M=\{(\rho(1+u(x)), x): x \in \mathbb{S}^n\}$. Let $g$ be a $C^3$ positive function defined on $[0,\infty)$. Suppose $\|u\|_{W^{2, \infty}}<\varepsilon$, then for any $0 \leq k \leq n$, there holds
		\begin{align}
			&\int_{M} g(\Phi)\sigma_{k}(\kappa) d\mu \label{expression g sigmak}\\ 
			=&\int_{\mathbb{S}^n} \binom{n}{k} \phi^{n-k}(\rho) {\phi'}^{k}(\rho) g(\Phi(\rho)) dA \notag\\
			&+\int_{\mathbb{S}^n} \binom{n}{k} \left\{\left[(n-k)\phi^{n-k-1}(\rho) {\phi'}^{k+1}(\rho) - Kk \phi^{n-k+1}(\rho) {\phi'}^{k-1}(\rho)\right]g(\Phi(\rho))\right. \notag\\&\left.+ \phi^{n-k+1}(\rho) {\phi'}^{k}(\rho) g'(\Phi(\rho))\right\} \rho u dA \notag\\
			&+ \int_{\mathbb{S}^n} \binom{n}{k} \left\{\left[\frac{(n-k)(n-k-1)}{2} \phi^{n-k-2}(\rho) {\phi'}^{k+2}(\rho)\right.\right. \notag\\&\left.\left.+ K\left(k^2-kn-\frac{n}{2}\right) \phi^{n-k}(\rho) {\phi'}^{k}(\rho) + K^{2}\frac{k(k-1)}{2} \phi^{n-k+2}(\rho) {\phi'}^{k-2}(\rho)\right] g(\Phi(\rho))\right. \notag\\&\left.+ \left(n-k+\frac{1}{2}\right) \phi^{n-k}(\rho) {\phi'}^{k+1}(\rho) g'(\Phi(\rho)) - Kk \phi^{n-k+2}(\rho) {\phi'}^{k-1}(\rho) g'(\Phi(\rho))\right. \notag\\&\left. + \frac{1}{2} \phi^{n-k+2}(\rho) {\phi'}^{k}(\rho) g''(\Phi(\rho))\right\} \rho^2 u^2 dA \notag\\
			&+ \int_{\mathbb{S}^n} \binom{n}{k} \left\{\left[\frac{(n-k)(k+1)}{2n} \phi^{n-k-2}(\rho) {\phi'}^{k}(\rho) - K \frac{k(k-1)}{2n} \phi^{n-k}(\rho) {\phi'}^{k-2}(\rho)\right] g(\Phi(\rho))\right. \notag\\&\left.
			 + \frac{k}{n} \phi^{n-k}(\rho) {\phi'}^{k-1}(\rho) g'(\Phi(\rho))\right\} \rho^2 |\nabla u|^2 dA \notag\\
			&+O(\varepsilon)\|u\|_{L^2\left(\mathbb{S}^n\right)}^2+O(\varepsilon)\|\nabla u\|_{L^2\left(\mathbb{S}^n\right)}^2.\notag
		\end{align}
	
\end{lemma}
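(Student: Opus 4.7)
\textbf{Proof proposal for Lemma \ref{lem expression g sigmak}.}
The plan is to mirror the strategy used to derive (\ref{gPhiH int}), but at the level of the full $k$-th elementary symmetric polynomial of the Weingarten tensor rather than its trace. The starting point will be the explicit formula
$$
h_j^i=\frac{\phi'}{D}\delta_j^i+A_j^i,\qquad A_j^i:=-\frac{\rho u_j^i}{D\phi}+\frac{\phi'\rho^2 u^i u_j}{D^3}+\frac{\rho^3 u^i u_k u_j^k}{D^3\phi},
$$
combined with the area element $d\mu=\phi^{n-1}D\,dA$ and the algebraic identity
$$
\sigma_k\!\left(\tfrac{\phi'}{D}\delta_j^i+A_j^i\right)=\sum_{\ell=0}^{k}\binom{n-\ell}{k-\ell}\!\left(\tfrac{\phi'}{D}\right)^{k-\ell}\!\sigma_\ell(A).
$$
Since $A$ is linear in $\nabla^2 u$ and the remainder terms are at least quadratic in $\nabla u$, only $\ell=0,1,2$ contribute to second order in $(u,\nabla u,\nabla^2 u)$.

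The next step will be to Taylor-expand the building blocks $\phi(r),\phi'(r),\Phi(r),g(\Phi(r)),D,$ and all powers thereof at $r=\rho$ (equivalently at $u=|\nabla u|^2=0$), using $r=\rho(1+u)$ and $\phi''=-K\phi$ to convert higher derivatives of $\phi$ into $K$-dependent expressions. The zeroth-order contribution will give the first line of (\ref{expression g sigmak}); the pointwise Taylor expansion of $g(\Phi)\phi^{n-k}\phi'^k$ in $\rho u$ to first and second order will produce the $\rho u$ and $\rho^2 u^2$ blocks, respectively. The three distinct sources of $\rho^2|\nabla u|^2$ terms will be tracked separately: (i) expansion of $1/D$ inside the leading $(\phi'/D)^k\phi^{n-1}D$ factor; (ii) the $\ell=1$ contribution, whose leading part $-\binom{n-1}{k-1}F(r)\rho\Delta u$ with $F(r)=g(\Phi)\phi^{n-k-1}\phi'^{k-1}$ is converted by integration by parts on $\mathbb{S}^n$ into a $\rho^2|\nabla u|^2$ term with coefficient $\binom{n-1}{k-1}F'(\rho)$; and (iii) the $\ell=2$ contribution $\sigma_2(A)=\tfrac{\rho^2}{2D^2\phi^2}\bigl((\Delta u)^2-|\nabla^2 u|^2\bigr)+\cdots$, which after applying the $\mathbb{S}^n$-Bochner identity
$$
\int_{\mathbb{S}^n}\!\bigl((\Delta u)^2-|\nabla^2 u|^2\bigr)\,dA=(n-1)\!\int_{\mathbb{S}^n}|\nabla u|^2\,dA
$$
yields a genuine $|\nabla u|^2$ contribution with coefficient $\binom{n-2}{k-2}\tfrac{n-1}{2}g(\Phi(\rho))\phi^{n-k-2}(\rho)\phi'^{k-2}(\rho)$.

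All remaining pieces — namely $u|\nabla u|^2$, $u^2\Delta u$, $|\nabla u|^4$, terms involving $\nabla^2 u[\nabla u,\nabla u]$, and the higher-order remainders of the expansions of $g(\Phi)$, $\phi^{n-k}$, $\phi'^k$, and $D^{-m}$ — will be estimated by redistributing derivatives through integration by parts and invoking the $W^{2,\infty}$-smallness $\|u\|_{W^{2,\infty}}<\varepsilon$. Each such term can be bounded pointwise by $\varepsilon\,(u^2+|\nabla u|^2)$ up to Lebesgue-measure zero, hence contributes to the error $O(\varepsilon)\|u\|_{L^2}^2+O(\varepsilon)\|\nabla u\|_{L^2}^2$.

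The main obstacle I anticipate is the combinatorial bookkeeping in the final step: verifying that the three independent sources of $|\nabla u|^2$ listed above, together with the induced $F'(\rho)$ in the integration-by-parts step (which itself splits into contributions involving $g'(\Phi)\phi$, $(n-k-1)\phi^{n-k-2}\phi'$, and $(k-1)\phi^{n-k-1}\phi'^{k-2}\phi''=-K(k-1)\phi^{n-k}\phi'^{k-2}$), precisely reassemble into the stated coefficient
$$
\binom{n}{k}\!\left[\tfrac{(n-k)(k+1)}{2n}\phi^{n-k-2}\phi'^k-K\tfrac{k(k-1)}{2n}\phi^{n-k}\phi'^{k-2}\right]\!g(\Phi)+\binom{n}{k}\tfrac{k}{n}\phi^{n-k}\phi'^{k-1}g'(\Phi),
$$
and that the $K$-dependent cross terms arising from $\phi''=-K\phi$ in the $\rho^2 u^2$ block combine to the stated expression. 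Repeated use of the binomial identities $\binom{n}{k}\tfrac{n-k}{n}=\binom{n-1}{k}$, $\binom{n}{k}\tfrac{k}{n}=\binom{n-1}{k-1}$, and $\binom{n}{k}\tfrac{k(k-1)}{n(n-1)}=\binom{n-2}{k-2}$ will be the bridge between the natural combinatorial coefficients produced by the expansion and the normalized form in (\ref{expression g sigmak}).
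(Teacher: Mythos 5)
Your overall strategy — expand $\sigma_k\bigl(\tfrac{\phi'}{D}I+A\bigr)=\sum_\ell\binom{n-\ell}{k-\ell}(\phi'/D)^{k-\ell}\sigma_\ell(A)$, Taylor-expand the coefficient functions at $u=|\nabla u|^2=0$, reduce the $\sigma_1(A)$ contribution via integration by parts and the $\sigma_2(A)$ contribution via the Bochner identity, and sweep the cubic and higher remainders into $O(\varepsilon)(\|u\|_{L^2}^2+\|\nabla u\|_{L^2}^2)$ using $\|u\|_{W^{2,\infty}}<\varepsilon$ — is the same mechanism the paper uses. (The paper instead quotes a pointwise formula for $\sigma_k(\kappa)\,d\mu$ from \cite[Lemma 3.1]{ZZ23} expressed in $\sigma_m(\nabla^2 u)$ and the Newton contractions $u^iu_j[T_m]^j_i(\nabla^2 u)$, and then applies the integral identities of \cite[Lemma 4.2]{VW24}; your $\sigma_k(cI+A)$ identity plus Bochner reproduces that machinery from scratch.)

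However, there is a concrete gap in your accounting of the quadratic terms. You list three sources of $\rho^2|\nabla u|^2$: (i) the $1/D$ expansion in the $\ell=0$ factor $(\phi'/D)^k\phi^{n-1}D$, (ii) integration by parts on the $\Delta u$ piece of $\sigma_1(A)$, and (iii) the Bochner identity applied to $\sigma_2(A)$. You are missing a fourth source: the trace of the rank-one block $\frac{\phi'\rho^2 u^iu_j}{D^3}$ inside $A$, whose contribution to $\sigma_1(A)$ is $\frac{\phi'\rho^2|\nabla u|^2}{D^3}$ and is already quadratic in $\nabla u$ without any integration by parts. At leading order it contributes $\binom{n-1}{k-1}\phi'^k(\rho)\phi^{n-k-2}(\rho)g(\Phi(\rho))\rho^2|\nabla u|^2$, which is \emph{not} contained in the $\binom{n-1}{k-1}F'(\rho)$ term you describe. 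If you sum only (i)–(iii), the coefficient of $g(\Phi(\rho))\phi^{n-k-2}(\rho)\phi'^k(\rho)$ comes out as $\binom{n}{k}\bigl[\tfrac{k(n-k-1)}{n}+\tfrac{k(k-1)}{2n}-\tfrac{k-1}{2}\bigr]$, which is short of the target $\binom{n}{k}\tfrac{(n-k)(k+1)}{2n}$ by exactly $\binom{n}{k}\tfrac{k}{n}=\binom{n-1}{k-1}$. Adding the missing rank-one source fixes this (and, after conversion via ${\phi'}^2+K\phi^2=1$, the $K$-dependent coefficients also match). This is precisely the $u^iu_j[T_m]^j_i(\nabla^2 u)$ term in the paper's formula, which you have to supply by hand once you choose to derive the pointwise expansion yourself rather than quote it.
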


\begin{proof}
	Starting from the expressions for the area element (\ref{area element}) and $\sigma_k(\kappa)$ (\cite[Lemma 3.1]{ZZ23}), we obtain the identity
	\begin{equation}\label{gphisigmak1}
		\begin{aligned}
			\int_M g(\Phi) \sigma_k(\kappa) \mathrm{d} \mu =&\int_{\mathbb{S}^n}\sum_{m=0}^k (-1)^m \binom{n-m}{k-m} \frac{{\phi'}^{k-m}g(\Phi)}{D^{k+1}} \left[ \rho^m \phi^{n-m+1} \sigma_m\left(\nabla^2 u\right)\right. \\&\left.+ \frac{k+n-2m}{n-m} \rho^{m+2} \phi^{n-m-1} u^i u_j\left[T_m\right]_i^j\left(\nabla^2 u\right)\right]\mathrm{d}A,
		\end{aligned}
	\end{equation}
	we expand $g(\Phi(\rho(1+u)))$ at $u=0$ as
	\begin{equation*}
		\begin{aligned}
			g(\Phi(\rho(1+u)))=&g(\Phi(\rho)) + g'(\Phi(\rho))\phi(\rho) \rho u \\&+ \frac{1}{2}\left[g''(\Phi(\rho))\phi^2(\rho)+g'(\Phi(\rho))\phi'(\rho)\right] \rho^2 u^{2} + o(u^2),
		\end{aligned}
	\end{equation*}
	and substitute the above formula into (\ref{gphisigmak1}), then get
	\begin{align*}
		\int_M g(\Phi) \sigma_k(\kappa) \mathrm{d} \mu
		=&\int_{\mathbb{S}^n}\sum_{m=0}^k\left[\frac{(-1)^m \rho^m \phi^{n-m+1} {\phi'}^{k-m}}{D^{k+1}} \binom{n-m}{k-m} g(\Phi(\rho)) \sigma_m\left(\nabla^2 u\right)\right.\\
		&\left.+\frac{(-1)^m \rho^{m+1} \phi^{n-m+1} {\phi'}^{k-m} u }{D^{k+1}} \binom{n-m}{k-m} g'(\Phi(\rho))\phi(\rho) \sigma_m\left(\nabla^2 u\right)\right.\\
		&\left.+\frac{(-1)^m \rho^{m+2} \phi^{n-m+1} {\phi'}^{k-m} u^2 }{D^{k+1}} \binom{n-m}{k-m}\right. \\ &\left.\times \frac{1}{2}\left(g''(\Phi(\rho))\phi^2(\rho)+g'(\Phi(\rho))\phi'(\rho)\right) \sigma_m\left(\nabla^2 u\right)\right.\\
		&\left.+\frac{(-1)^m \rho^{m+2} \phi^{n-m-1} {\phi'}^{k-m} g(\Phi) }{D^{k+1}}\binom{n-m}{k-m}\right. \\
		&\left.\times\frac{k+n-2 m}{n-m} u^i u_j\left[T_m\right]_i^j\left(\nabla^2 u\right)\right]\mathrm{d}A\\
		&+O(\varepsilon)\|u\|_{L^2\left(\mathbb{S}^n\right)}^2+O(\varepsilon)\|\nabla u\|_{L^2\left(\mathbb{S}^n\right)}^2.
	\end{align*}
	
	We proceed above equation with Taylor expansion at $u=|\nabla u|^2=0$, using the assumption $\|u\|_{W^{2,\infty}} < \varepsilon$ to control the remainder terms. This yields 
	\begin{equation}\label{gsigmak111}
		\begin{aligned}
			\int_M g(\Phi) \sigma_k(\kappa) \mathrm{d} \mu
			=&\int_{\mathbb{S}^n}\sum_{m=0}^k\left[\left(A_{0}^{m}+A_{1}^{m}u+A_{2}^{m}u^2+A^{m}|\nabla u|^2\right) g(\Phi(\rho)) \sigma_m\left(\nabla^2 u\right)\right.\\
			&\left.+\left(B_{1}^{m}u+B_{2}^{m}u^2\right) g'(\Phi(\rho))\phi(\rho) \sigma_m\left(\nabla^2 u\right)\right.\\
			&\left.+C_{2}^{m}u^2 \frac{1}{2} \left(g''(\Phi(\rho))\phi^2(\rho)+g'(\Phi(\rho))\phi'(\rho)\right) \sigma_m\left(\nabla^2 u\right)\right.\\
			&\left.+D^m u^i u_j\left[T_m\right]_i^j\left(\nabla^2 u\right)\right]\mathrm{d}A\\
			&+O(\varepsilon)\|u\|_{L^2\left(\mathbb{S}^n\right)}^2+O(\varepsilon)\|\nabla u\|_{L^2\left(\mathbb{S}^n\right)}^2,
		\end{aligned}
	\end{equation} 
	where for $i=0,1,2,$
	\begin{align}
		A_{i}^{m}=&(i!)^{-1}\left.\frac{\partial^{i}}{\partial^{i}u}\left[\frac{(-1)^m \rho^m \phi^{n-m+1} {\phi'}^{k-m}}{D^{k+1}} \binom{n-m}{k-m}\right]\right|_{u=|\nabla u|^2=0},\label{aim}\\
		A^{m}=&\left.\frac{\partial}{\partial|\nabla u|^2}\left[\frac{(-1)^m \rho^m \phi^{n-m+1} {\phi'}^{k-m}}{D^{k+1}} \binom{n-m}{k-m}\right]\right|_{u=|\nabla u|^2=0},\\
		B_{i}^{m}=&(i!)^{-1}\left.\frac{\partial^{i}}{\partial^{i}u}\left[\frac{(-1)^m \rho^{m+1} \phi^{n-m+1} {\phi'}^{k-m} u }{D^{k+1}} \binom{n-m}{k-m}\right]\right|_{u=|\nabla u|^2=0},\\
		C_{i}^{m}=&(i!)^{-1}\left.\frac{\partial^{i}}{\partial^{i}u}\left[\frac{(-1)^m \rho^{m+2} \phi^{n-m+1} {\phi'}^{k-m} u^2 }{D^{k+1}} \binom{n-m}{k-m}\right]\right|_{u=|\nabla u|^2=0},\\
		D^{m}=&\left.\frac{(-1)^m \rho^{m+2} \phi^{n-m-1} {\phi'}^{k-m} g(\Phi) }{D^{k+1}}\binom{n-m}{k-m}\frac{k+n-2 m}{n-m}\right|_{u=|\nabla u|^2=0}\nonumber\\
		=&(-1)^m \rho^{m+2} \phi^{n-m-k-2}(\rho) {\phi'}^{k-m}(\rho) g(\Phi(\rho)) \binom{n-m}{k-m}\frac{k+n-2 m}{n-m}.\label{dm}
	\end{align}
	
	Using the following identities which have been proved in \cite[Lemma 4.2]{VW24}: 
	\begin{align}
		\int_{\mathbb{S}^n} u^i u_j\left[T_m\right]_i^j\left(\nabla^2 u\right) \mathrm{d} A= & \frac{m+2}{2} \int_{\mathbb{S}^n}|\nabla u|^2 \sigma_m\left(\nabla^2 u\right) \mathrm{d} A \nonumber\\
		& +O(\varepsilon)\|\nabla u\|_{L^2\left(\mathbb{S}^n\right)}^2 \quad(m \geqslant 1), \\
		\int_{\mathbb{S}^n} \sigma_m\left(\nabla^2 u\right) \mathrm{d} A= & \frac{n-m+1}{2} \int_{\mathbb{S}^n}|\nabla u|^2 \sigma_{m-2}\left(\nabla^2 u\right) \mathrm{d} A \nonumber\\
		& +O(\varepsilon)\|\nabla u\|_{L^2\left(\mathbb{S}^n\right)}^2 \quad(m \geqslant 2), \\
		\int_{\mathbb{S}^n} \sigma_1\left(\nabla^2 u\right) \mathrm{d} A= & 0, \\
		\int_{\mathbb{S}^n} u \sigma_m\left(\nabla^2 u\right) \mathrm{d} A=&-  \frac{m+1}{2 m} \int_{\mathbb{S}^n}|\nabla u|^2 \sigma_{m-1}\left(\nabla^2 u\right) \mathrm{d} A \nonumber\\
		& +O(\varepsilon)\|\nabla u\|_{L^2\left(\mathbb{S}^n\right)}^2 \quad(m \geqslant 1), \\
		\int_{\mathbb{S}^n} u^2 \sigma_m\left(\nabla^2 u\right) \mathrm{d} A= & O(\varepsilon)\|\nabla u\|_{L^2\left(\mathbb{S}^n\right)}^2 \quad(m \geqslant 1),
	\end{align}
	we can rewrite (\ref{gsigmak111}) as
	\begin{align*}
		&\int_M g(\Phi) \sigma_k(\kappa) \mathrm{d} \mu \\
		=&\int_{\mathbb{S}^n}\left[A_{0}^{0}g(\Phi(\rho))+\left(A_{1}^{0}g(\Phi(\rho))+B_{1}^{0}g'(\Phi(\rho))\phi(\rho)\right)u+\left(A_{2}^{0}g(\Phi(\rho))+B_{2}^{0}g'(\Phi(\rho))\phi(\rho)\right.\right.\\&\left.\left.+C_{2}^{0}\frac{1}{2}\left(g''(\Phi(\rho))\phi^2(\rho)+g'(\Phi(\rho))\phi'(\rho)\right)\right)u^2+\left(A^{0}g(\Phi(\rho))+D^0\right)|\nabla u|^2\right]\mathrm{d}A\\
		&+\int_{\mathbb{S}^n}\left[-\left(A_{1}^{1}g(\Phi(\rho))+B_{1}^{1}g'(\Phi(\rho))\phi(\rho)\right)|\nabla u|^2\right.\\&\left.+\left(A^{1}g(\Phi(\rho))+\frac{3}{2}D^1\right)|\nabla u|^2\sigma_1\left(\nabla^2 u\right)\right]\mathrm{d}A\\
		&+\int_{\mathbb{S}^n}\sum_{m=2}^{k}\left[\frac{n-m+1}{2}|\nabla u|^2\sigma_{m-2}\left(\nabla^2 u\right)A_{0}^{m}g(\Phi(\rho))\right.\\&\left.-\left(A_{1}^{m}g(\Phi(\rho))+B_{1}^{m}g'(\Phi(\rho))\phi(\rho)\right)\frac{m+1}{2m}|\nabla u|^2\sigma_{m-1}\left(\nabla^2 u\right)\right.\\&\left.+A^{m}g(\Phi(\rho))|\nabla u|^2\sigma_m\left(\nabla^2 u\right)+D^m\frac{m+2}{2}|\nabla u|^2\sigma_m\left(\nabla^2 u\right)\right]\mathrm{d}A\\&+O(\varepsilon)\|u\|_{L^2\left(\mathbb{S}^n\right)}^2+O(\varepsilon)\|\nabla u\|_{L^2\left(\mathbb{S}^n\right)}^2\\
		=&\int_{\mathbb{S}^n}\left[A_{0}^{0}g(\Phi(\rho))+\left(A_{1}^{0}g(\Phi(\rho))+B_{1}^{0}g'(\Phi(\rho))\phi(\rho)\right)u\right.\\&\left.+\left(A_{2}^{0}g(\Phi(\rho))+B_{2}^{0}g'(\Phi(\rho))\phi(\rho)+C_{2}^{0}\frac{1}{2}\left(g''(\Phi(\rho))\phi^2(\rho)+g'(\Phi(\rho))\phi'(\rho)\right)\right)u^2\right]\mathrm{d}A\\
		&+\sum_{m=0}^{k-2}\int_{\mathbb{S}^n}\left[A^{m}g(\Phi(\rho))+D^m\frac{m+2}{2}-\left(A_{1}^{m+1}g(\Phi(\rho))+B_{1}^{m+1}g'(\Phi(\rho))\phi(\rho)\right)\frac{m+2}{2(m+1)}\right.\\&\left.+A_{0}^{m+2}g(\Phi(\rho))\frac{n-m-1}{2}\right]|\nabla u|^2\sigma_m\left(\nabla^2 u\right)\mathrm{d}A\\
		&+\int_{\mathbb{S}^n}\left[A^{k-1}g(\Phi(\rho))+D^{k-1}\frac{k+1}{2}-\left(A_{1}^{k}g(\Phi(\rho))+B_{1}^{k}g'(\Phi(\rho))\phi(\rho)\right)\frac{k+1}{2k}\right]\\&\times|\nabla u|^2\sigma_{k-1}\left(\nabla^2 u\right)\mathrm{d}A\\
		&+\int_{\mathbb{S}^n}\left[A^{k}g(\Phi(\rho))+D^{k}\frac{k+2}{2}\right]|\nabla u|^2\sigma_{k}\left(\nabla^2 u\right)\mathrm{d}A\\
		&+O(\varepsilon)\|u\|_{L^2\left(\mathbb{S}^n\right)}^2+O(\varepsilon)\|\nabla u\|_{L^2\left(\mathbb{S}^n\right)}^2.
	\end{align*}
	Note that for $1\leq k\leq n$,
	\begin{equation*}
		\sum_{m=1}^{k}\int_{\mathbb{S}^n}C(n,m,k,\rho)|\nabla u|^2\sigma_{m}\left(D^2 u\right)\mathrm{d}A=O(\varepsilon)\|\nabla u\|_{L^2\left(\mathbb{S}^n\right)}^2,
	\end{equation*}
	we have
	\begin{align*}
		&\int_M g(\Phi) \sigma_k(\kappa) \mathrm{d} \mu \\
		=&\int_{\mathbb{S}^n}\left[A_{0}^{0}g(\Phi(\rho))+\left(A_{1}^{0}g(\Phi(\rho))+B_{1}^{0}g'(\Phi(\rho))\phi(\rho)\right)u\right.\\&\left.+\left(A_{2}^{0}g(\Phi(\rho))+B_{2}^{0}g'(\Phi(\rho))\phi(\rho)+C_{2}^{0}\frac{1}{2}\left(g''(\Phi(\rho))\phi^2(\rho)+g'(\Phi(\rho))\phi'(\rho)\right)\right)u^2\right]\mathrm{d}A\\
		&+\int_{\mathbb{S}^n}\left[A^{0}g(\Phi(\rho))+D^0-\left(A_{1}^{1}g(\Phi(\rho))+B_{1}^{1}g'(\Phi(\rho))\phi(\rho)\right)+A_{0}^{2}g(\Phi(\rho))\frac{n-1}{2}\right]\\&\times|\nabla u|^2\mathrm{d}A+O(\varepsilon)\|u\|_{L^2\left(\mathbb{S}^n\right)}^2+O(\varepsilon)\|\nabla u\|_{L^2\left(\mathbb{S}^n\right)}^2.
	\end{align*}
	Then from (\ref{aim})--(\ref{dm}), using the fact ${\phi'}^2 + K\phi^2 =1$ and direct calculation, we get the formula (\ref{expression g sigmak}).

\end{proof}

\section{Weighted Minkowski type inequalities in space forms}\label{sec3}
In this section, we will use the similar idea as in the work of Glaudo \cite{G22} to prove Theorems \ref{gH volume thm} and \ref{gH weight volume thm}. Before that, the following Poincar\'{e}-type estimate is crucial for our proofs.

\begin{lemma}[Poincar\'{e}-type Estimate]
	Let $M=\{(\rho(1+u(x)),x):x\in\mathbb{S}^n\}$ be a nearly spherical set with $\|u\|_{C^1}<\varepsilon$ for some small $\varepsilon>0$. Let $\Omega$ be the domain enclosed by $M$ in $N^{n+1}(K)$ $(K=-1,0,1)$ satisfying $\operatorname{bar}(\Omega)=O$.
	\begin{itemize}
		\item[(i)] If $\operatorname{Vol}(\Omega)=\operatorname{Vol}(\bar{B}_{\rho})$, then 
		\begin{equation}\label{volume poin esti}
			\|\nabla u\|_{L^2\left(\mathbb{S}^n\right)}^2 \geq 2(n+1)\|u\|_{L^2\left(\mathbb{S}^n\right)}^2 + O(\varepsilon)\|u\|_{L^2\left(\mathbb{S}^n\right)}^2.
		\end{equation}
		\item[(ii)] If $\int_{\Omega}\phi' dv = \int_{\bar{B}_{\rho}}\phi' dv$, then
		\begin{equation}\label{weight volume poin esti}
			\|\nabla u\|_{L^2\left(\mathbb{S}^n\right)}^2 \geq 2(n+1)\|u\|_{L^2\left(\mathbb{S}^n\right)}^2 + O(\varepsilon)\|u\|_{L^2\left(\mathbb{S}^n\right)}^2.
		\end{equation}
	\end{itemize}
\end{lemma}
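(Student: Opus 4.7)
\emph{Proof plan.} My approach is spectral. First I would expand $u$ in spherical harmonics on $\mathbb{S}^n$, writing $u=\sum_{k\geq 0}u_k$ where $u_k$ lies in the $k$-th eigenspace of $-\Delta_{\mathbb{S}^n}$ with eigenvalue $\lambda_k=k(k+n-1)$. Since $\lambda_2=2(n+1)$, Parseval's identity gives
\[
\|\nabla u\|_{L^{2}(\mathbb{S}^n)}^{2}=\sum_{k\geq 0}\lambda_k\|u_k\|_{L^{2}(\mathbb{S}^n)}^{2}\geq 2(n+1)\bigl(\|u\|_{L^{2}(\mathbb{S}^n)}^{2}-\|u_0\|_{L^{2}(\mathbb{S}^n)}^{2}-\|u_1\|_{L^{2}(\mathbb{S}^n)}^{2}\bigr),
\]
so the lemma reduces to proving $\|u_0\|_{L^{2}}^{2}+\|u_1\|_{L^{2}}^{2}=O(\varepsilon)\|u\|_{L^{2}}^{2}$. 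The volume normalization will handle $u_0$, and the barycenter condition will handle $u_1$.

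To bound $u_0$, I would Taylor-expand the integrand of (\ref{volume}) in case (i), or of (\ref{weight volume}) in case (ii), around $u=0$. The linear term in $u$ has a nonvanishing constant coefficient ($\phi^{n}(\rho)\rho$ or $\phi^{n}(\rho)\phi'(\rho)\rho$ respectively), so the constraint forces $|\int_{\mathbb{S}^n}u\,dA|$ to be absorbed by the quadratic remainder, which is controlled pointwise by $u^{2}$ and hence in integral by $\|u\|_{L^{2}}^{2}$. This gives $|u_0|\leq C\|u\|_{L^{2}}^{2}$ and therefore $\|u_0\|_{L^{2}}^{2}\leq C'\|u\|_{L^{2}}^{4}\leq C'\|u\|_{L^{\infty}}^{2}\|u\|_{L^{2}}^{2}=O(\varepsilon^{2})\|u\|_{L^{2}}^{2}$.

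To bound $u_1$, I plan to use the first-order optimality of the barycenter. Since $O$ minimizes $p\mapsto\int_{\Omega}\mathrm{d}_K^{2}(y,p)\,dv_K(y)$, the Gauss lemma yields $\int_{\Omega}\exp_O^{-1}(y)\,dv_K(y)=0$. In the warped-product polar coordinates $\bar g=dr^{2}+\phi^{2}(r)g_{\mathbb{S}^n}$, the identification $\exp_O^{-1}(r,\omega)=r\omega$ (valid because the radial curves are unit-speed geodesics from $O$) together with $dv_K=\phi^{n}(r)\,dr\,dA$ turns this into
\[
\int_{\mathbb{S}^n}\omega_i\int_0^{\rho(1+u(\omega))}r\,\phi^{n}(r)\,dr\,dA=0\qquad(i=1,\dots,n+1),
\]
which vanishes at $u\equiv 0$ by spherical symmetry. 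Taylor expansion then gives $\rho^{2}\phi^{n}(\rho)\int_{\mathbb{S}^n}\omega_i u\,dA=O(\|u\|_{L^{2}}^{2})$, and since $\omega_1,\dots,\omega_{n+1}$ span the first eigenspace of $-\Delta_{\mathbb{S}^n}$, the same bootstrap as in the previous paragraph yields $\|u_1\|_{L^{2}}^{2}=O(\varepsilon^{2})\|u\|_{L^{2}}^{2}$.

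The main obstacle will be the geometric step in the barycenter argument for $K=\pm 1$, namely justifying that $\exp_O^{-1}(r,\omega)=r\omega$ under the canonical identification $T_O N^{n+1}(K)\cong\mathbb{R}^{n+1}$; this relies on the warped-product form of the metric, which ensures that the radial curves emanating from $O$ are exactly the unit-speed geodesics, so that the gradient of $\tfrac{1}{2}\mathrm{d}_K^{2}(y,\cdot)$ at $O$ equals $-\exp_O^{-1}(y)$ as in the Euclidean case. Once this is in place, combining the $u_0$ and $u_1$ bounds with the Parseval inequality above closes both (\ref{volume poin esti}) and (\ref{weight volume poin esti}).
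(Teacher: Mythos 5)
Your proposal is correct and follows essentially the same spectral route as the paper: expand $u$ in spherical harmonics, use $\lambda_k\geq 2(n+1)$ for $k\geq 2$, and kill the zeroth and first modes via the volume/weighted-volume normalization and the barycenter condition. The one place where you go beyond the paper is the barycenter step: the paper simply invokes \cite[Lemma 5.2]{VW24} (or \cite[Lemma 4.1]{ZZ23}) for $a_1=O(\varepsilon)\|u\|_{L^2}$, whereas you derive the first-order optimality condition $\int_\Omega\exp_O^{-1}(y)\,dv_K(y)=0$ from scratch via the identification $\exp_O^{-1}(r,\omega)=r\omega$ in the warped-product coordinates (which is correctly justified by the radial geodesics, and is unproblematic for $K=1$ as long as $\rho(1+u)<\pi$ so the exponential map stays away from the cut locus); this is a genuine self-contained addition, but it lands at the same estimate as the cited lemmas, so the overall argument is the same.
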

\begin{proof}
	Suppose $u$ can be represented as 
	$$
	u=\sum_{k=0}^{\infty} a_k Y_k,
	$$
	where $\left\{Y_k\right\}_{k=0}^{\infty}$ corresponds the spherical harmonics which forms an orthonormal basis for $L^2 (\mathbb{S}^n)$. Since $Y_0 = 1$, we have
	$$
	a_0 = \int_{\mathbb{S}^n} u dA.
	$$
	
	(i) Assume that $\operatorname{Vol}(\Omega)=\operatorname{Vol}(\bar{B}_{\rho})$, by (\ref{volume}) we have
	\begin{equation*}
		\begin{aligned}
			0=&\operatorname{Vol}(\Omega)-\operatorname{Vol}(\bar{B}_{\rho})\\
			=&\int_{\mathbb{S}^n}\left(\int_{\rho}^{\rho(1+u)}\phi^n(r)dr\right)dA\\
			=&\int_{\mathbb{S}^n}\phi^n(\rho)\rho udA + \int_{\mathbb{S}^n}\frac{n}{2}\phi^{n-1}(\rho)\phi'(\rho)\rho^2 u^2 dA + O(\varepsilon)\|u\|_{L^2\left(\mathbb{S}^n\right)}^2.
		\end{aligned}
	\end{equation*}
	It implies that
	\begin{equation}\label{u volume}
		\int_{\mathbb{S}^n}udA=-\frac{n}{2}\frac{\phi'(\rho)}{\phi(\rho)}\rho	\int_{\mathbb{S}^n}u^2 dA+ O(\varepsilon)\|u\|_{L^2\left(\mathbb{S}^n\right)}^2.
	\end{equation}
	Therefore, we get
	\begin{equation}\label{volume a0}
		a_0 = O(\varepsilon)\|u\|_{L^2\left(\mathbb{S}^n\right)}.
	\end{equation}
	
	(ii) Assume that $\int_{\Omega}\phi' dv = \int_{\bar{B}_{\rho}}\phi' dv$, using (\ref{weight volume}) and $\phi''=-K\phi$, we have
	\begin{equation*}
		\begin{aligned}
			0=&(n+1)\int_{\Omega}\phi' dv - (n+1)\int_{\bar{B}_{\rho}}\phi' dv\\
			=&\int_{\mathbb{S}^n} (n+1) \phi^{n}(\rho) \phi'(\rho) \rho udA + \int_{\mathbb{S}^n} \frac{n+1}{2} \left[n\phi^{n-1}(\rho){\phi'}^2 (\rho) - K \phi^{n+1}(\rho)\right]\rho^2 u^2 dA \\
			&+ O(\varepsilon)\|u\|_{L^2\left(\mathbb{S}^n\right)}^2.
		\end{aligned}
	\end{equation*}
	It implies that
	\begin{equation}\label{u weight volum}
		\int_{\mathbb{S}^n}udA=-\frac{1}{2} \left[n\frac{\phi'(\rho)}{\phi(\rho)} - K\frac{\phi(\rho)}{\phi'(\rho)}\right]\rho \int_{\mathbb{S}^n}u^2 dA+ O(\varepsilon)\|u\|_{L^2\left(\mathbb{S}^n\right)}^2.
	\end{equation}
	Thus we have
	\begin{equation}\label{weighted volume a0}
		a_0 = O(\varepsilon)\|u\|_{L^2\left(\mathbb{S}^n\right)}.
	\end{equation}
	
	As shown in \cite[Lemma 5.2]{VW24} or \cite[Lemma 4.1]{ZZ23}, under the assumption $\operatorname{bar}(\Omega)=O$, then
	\begin{equation}\label{a1}
		a_1 = O(\varepsilon)\|u\|_{L^2\left(\mathbb{S}^n\right)}.
	\end{equation}
	Note that the eigenvalues of the Laplace-Beltrami operator $-\Delta$ on $\mathbb{S}^n$ are $\lambda_{k}=k(n+k-1)$ $(k=0,1,\cdots)$. When $k \geq 2$, we have that $\lambda_k \geq 2(n+1)$, then
	\begin{equation*}
		\begin{aligned}
			\|\nabla u\|_{L^2\left(\mathbb{S}^n\right)}^2 =& \sum_{k=1}^{\infty} \lambda_k a_k^2 = \sum_{k=2}^{\infty} \lambda_k a_k^2 + n a_1^2 \\
			\geq& 2(n+1) \sum_{k=2}^{\infty} a_k^2 + n a_1^2 \\
			=& 2(n+1) \sum_{k=0}^{\infty} a_k^2 - 2(n+1) a_0^2 - (n+2) a_1^2 \\
			=& 2(n+1) \|u\|_{L^2\left(\mathbb{S}^n\right)}^2 + O(\varepsilon)\|u\|_{L^2\left(\mathbb{S}^n\right)}^2.
		\end{aligned}
	\end{equation*}
	The last equality is given by (\ref{volume a0}), (\ref{weighted volume a0}) and (\ref{a1}).
	
\end{proof}

\begin{proof}[Proof of Theorem \ref{gH volume thm}]
	Assume that $\operatorname{Vol}(\Omega)=\operatorname{Vol}(\bar{B}_{\rho})$, then
	plugging (\ref{u volume}) into (\ref{gPhiH int}) and using ${\phi'}^2 + K\phi^2 =1$ we have
	\begin{equation}
		\begin{aligned}
			&\int_{M} g(\Phi) H d\mu - \int_{\partial\bar{B}_{\rho}} g(\Phi) H d\mu \\=
			& - \int_{\mathbb{S}^n} n\left[(n-1)\phi^{n-3}(\rho) {\phi'}(\rho) g(\Phi(\rho))-\frac{1}{2}\phi^{n+1}(\rho) {\phi'}(\rho) g''(\Phi(\rho))\right.\\&\left.-\frac{n+1}{2}\phi^{n-1}(\rho) {\phi'}^2(\rho) g'(\Phi(\rho))+\phi^{n-1}(\rho) g'(\Phi(\rho))\right] \rho^2 u^2 dA \\&+ \int_{\mathbb{S}^n} \left[(n-1)\phi^{n-3}(\rho)\phi'(\rho) g(\Phi(\rho)) + \phi^{n-1}(\rho) g'(\Phi(\rho))\right] \rho^2 |\nabla u|^2 dA \\ & + \int_{\mathbb{S}^n} \frac{\phi^{n-2} g(\Phi) \rho^3}{D^2} \nabla^2 u \left[\nabla u, \nabla u\right] dA + O(\varepsilon)\|u\|_{L^2\left(\mathbb{S}^n\right)}^2+O(\varepsilon)\|\nabla u\|_{L^2\left(\mathbb{S}^n\right)}^2.
		\end{aligned}
	\end{equation}
	Recall the assumption that $g$ is a non-decreasing convex $C^3$ positive function, we have
		\begin{align}
			&\int_{M} g(\Phi) H d\mu - \int_{\partial\bar{B}_{\rho}} g(\Phi) H d\mu \notag\\ \geq
			& - \int_{\mathbb{S}^n} n\left[(n-1)\phi^{n-3}(\rho) {\phi'}(\rho) g(\Phi(\rho))+\phi^{n-1}(\rho) g'(\Phi(\rho))\right] \rho^2 u^2 dA \label{gH int volume}\\&+ \int_{\mathbb{S}^n} \left[(n-1)\phi^{n-3}(\rho)\phi'(\rho) g(\Phi(\rho)) + \phi^{n-1}(\rho) g'(\Phi(\rho))\right] \rho^2 |\nabla u|^2 dA \notag\\ & + \int_{\mathbb{S}^n} \frac{\phi^{n-2} g(\Phi) \rho^3}{D^2} \nabla^2 u \left[\nabla u, \nabla u\right] dA + O(\varepsilon)\|u\|_{L^2\left(\mathbb{S}^n\right)}^2+O(\varepsilon)\|\nabla u\|_{L^2\left(\mathbb{S}^n\right)}^2.\notag
		\end{align}
	
	Denote $H^{-} := -\min(H,0)$, thanks to (\ref{H formula}), we have
	\begin{equation}\label{div estimate1}
		\begin{aligned}
			\operatorname{div}\left(\frac{\phi}{D}\nabla u\right) &= \frac{n\phi'\phi^2 + \phi'\rho^2 |\nabla u|^2}{\rho D} - \frac{\phi^2}{\rho}H \\
			&= n\frac{\phi'(\rho)\phi(\rho)}{\rho} + O(\varepsilon) - H\left(\frac{\phi^2 (\rho)}{\rho} + O(\varepsilon)\right) \\
			&\leq n\frac{\phi'(\rho)\phi(\rho)}{\rho} + O(\varepsilon) + 2\frac{\phi^2 (\rho)}{\rho}H^{-}.
		\end{aligned}
	\end{equation}
	Let $f(u,|\nabla u|^2) :=\frac{\phi^{n-3}g(\Phi)}{D}$ and $F(u,|\nabla u|^2) := \int_{0}^{|\nabla u|^2}f(u,t) dt$. One can check that $\partial_{1}F(u,|\nabla u|^2)=O(\varepsilon)$, $\partial_{2}F(u,|\nabla u|^2)=f(u,|\nabla u|^2)$ and $F(u,|\nabla u|^2) = \phi^{n-4}(\rho)g(\Phi(\rho))(1+O(\varepsilon)) |\nabla u|^2$. It implies
	\begin{equation*}
		\nabla\left( F \left(u,|\nabla u|^2\right)\right) = \frac{\phi^{n-3}g(\Phi)}{D} \nabla |\nabla u|^2 + O(\varepsilon) \nabla u.
	\end{equation*}
	Integrating by parts, the cubic term appearing in (\ref{gH int volume}) becomes
	\begin{equation}\label{H second term1}
		\begin{aligned}
			\int_{\mathbb{S}^n} \frac{\phi^{n-2} g(\Phi) \rho^3}{D^2} \nabla^2 u \left[\nabla u, \nabla u\right] =& \frac{\rho^3}{2} \int_{\mathbb{S}^n} \frac{\phi}{D}\nabla u \cdot \frac{\phi^{n-3} g(\Phi)}{D}\nabla |\nabla u|^2 \\
			=& \frac{\rho^3}{2} \int_{\mathbb{S}^n} \frac{\phi}{D}\nabla u \cdot \nabla\left( F \left(u,|\nabla u|^2\right)\right) + O(\varepsilon)\|\nabla u\|_{L^2\left(\mathbb{S}^n\right)}^2 \\
			=& -\frac{\rho^3}{2} \int_{\mathbb{S}^n} \operatorname{div}\left(\frac{\phi}{D}\nabla u\right)F \left(u,|\nabla u|^2\right) + O(\varepsilon)\|\nabla u\|_{L^2\left(\mathbb{S}^n\right)}^2 \\
			=& -\frac{1}{2}\phi^{n-4}(\rho) g(\Phi(\rho)) \rho^3 (1+O(\varepsilon)) \int_{\mathbb{S}^n} \operatorname{div}\left(\frac{\phi}{D}\nabla u\right)|\nabla u|^2 \\
			&+ O(\varepsilon)\|\nabla u\|_{L^2\left(\mathbb{S}^n\right)}^2.
		\end{aligned}
	\end{equation}
	
	Decomposing $u=u_1 + u_2$, where $u_1, u_2$ belong to the subspaces generated by the eigenfunctions of $-\Delta$ with eigenvalues respectively smaller and larger than a fixed $\lambda>0$. Notice that $u_1$ lives in the finite-dimensional space and all norms are equivalent on a finite-dimensional space. Therefore, $\|\nabla^2 u_1\|_{\infty} \leq c(n)\|\nabla u_1\|_{\infty} = O(\varepsilon)$, where $c(n) > 0$ is a constant that depends only on $n$. 
	
	Let $h(u,|\nabla u|^2) :=\frac{\phi}{D}$ and $H(u,|\nabla u|^2) := \int_{0}^{|\nabla u|^2}h(u,t) dt$. Notice that $\partial_{1}H(u,|\nabla u|^2)=O(\varepsilon)$, $\partial_{2}H(u,|\nabla u|^2)=h(u,|\nabla u|^2)$ and $H(u,|\nabla u|^2) = (1+O(\varepsilon)) |\nabla u|^2$. It implies
	\begin{equation*}
		\nabla\left( H \left(u,|\nabla u|^2\right)\right) = \frac{\phi}{D} \nabla |\nabla u|^2 + O(\varepsilon) \nabla u.
	\end{equation*}
	Integrating by parts, we get
	\begin{equation}\label{decomposing div1}
		\begin{aligned}
			\int_{\mathbb{S}^n} \operatorname{div}\left(\frac{\phi}{D}\nabla u\right)|\nabla u|^2 =& -2\int_{\mathbb{S}^n} \frac{\phi}{D} \nabla^2 u \left[\nabla u, \nabla u\right] \\
			=& -2\int_{\mathbb{S}^n} \frac{\phi}{D} \nabla^2 u \left[\nabla u, \nabla u_1\right] -2\int_{\mathbb{S}^n} \frac{\phi}{D} \nabla^2 u_2 \left[\nabla u, \nabla u_2\right] \\
			&+ O(\varepsilon)\|\nabla u\|_{L^2\left(\mathbb{S}^n\right)}^2 \\
			=& -\int_{\mathbb{S}^n} \nabla\left( H \left(u,|\nabla u|^2\right)\right) \cdot \nabla u_1 -\int_{\mathbb{S}^n} \frac{\phi}{D} \nabla u \cdot \nabla |\nabla u_2|^2 \\
			&+ O(\varepsilon)\|\nabla u\|_{L^2\left(\mathbb{S}^n\right)}^2\\
			=& \int_{\mathbb{S}^n} H \left(u,|\nabla u|^2\right) \cdot \Delta u_1 + \int_{\mathbb{S}^n} \operatorname{div}\left(\frac{\phi}{D}\nabla u\right)|\nabla u_2|^2 \\
			&+ O(\varepsilon)\|\nabla u\|_{L^2\left(\mathbb{S}^n\right)}^2\\
			=& \int_{\mathbb{S}^n} \operatorname{div}\left(\frac{\phi}{D}\nabla u\right)|\nabla u_2|^2 + O(\varepsilon)\|\nabla u\|_{L^2\left(\mathbb{S}^n\right)}^2.
		\end{aligned}
	\end{equation}
	Applying (\ref{div estimate1}) and (\ref{decomposing div1}), (\ref{H second term1}) becomes
	\begin{equation*}
		\begin{aligned}
			\int_{\mathbb{S}^n} \frac{\phi^{n-2} g(\Phi) \rho^3}{D^2} \nabla^2 u \left[\nabla u, \nabla u\right] =& -\frac{1}{2}\phi^{n-4}(\rho) g(\Phi(\rho)) \rho^3 (1+O(\varepsilon)) \int_{\mathbb{S}^n} \operatorname{div}\left(\frac{\phi}{D}\nabla u\right)|\nabla u_2|^2 \\
			&+ O(\varepsilon)\|\nabla u\|_{L^2\left(\mathbb{S}^n\right)}^2 \\
			\geq & -\int_{\mathbb{S}^n} \frac{n}{2}\phi^{n-3}(\rho) {\phi'}(\rho) g(\Phi(\rho)) \rho^2 |\nabla u_2|^2 + O(\varepsilon) \int_{\mathbb{S}^n} H^{-} \\
			& + O(\varepsilon)\|\nabla u\|_{L^2\left(\mathbb{S}^n\right)}^2.
		\end{aligned}
	\end{equation*}
	Notice that by (\ref{area element}), we have
	\begin{equation*}
		\begin{aligned}
			\int_{M}g(\Phi)H^{-} =& \int_{\mathbb{S}^n}g(\Phi)H^{-}\phi^{n-1}D = \int_{\mathbb{S}^n}H^{-}\phi^{n}(\rho)g(\Phi(\rho))(1+O(\varepsilon))\\ \geq &\frac{\phi^{n}(\rho)g(\Phi(\rho))}{2} \int_{\mathbb{S}^n}H^{-}.
		\end{aligned}
	\end{equation*}
To sum the above two equations, we can obtain
	\begin{equation}\label{estimate}
		\begin{aligned}
			\int_{\mathbb{S}^n} \frac{\phi^{n-2} g(\Phi) \rho^3}{D^2} \nabla^2 u \left[\nabla u, \nabla u\right] + \int_{M}g(\Phi)H^{-} \geq & -\int_{\mathbb{S}^n} \frac{n}{2}\phi^{n-3}(\rho) {\phi'}(\rho) g(\Phi(\rho)) \rho^2 |\nabla u_2|^2 \\&+ O(\varepsilon)\|\nabla u\|_{L^2\left(\mathbb{S}^n\right)}^2.
		\end{aligned}
	\end{equation}
	Putting the last inequality and (\ref{gH int volume}) togather, we have
	\begin{equation}\label{defict g H int}
		\begin{aligned}
			&\int_{M} g(\Phi) H^+ d\mu - \int_{\partial\bar{B}_{\rho}} g(\Phi) H d\mu \\\geq
			& \int_{\mathbb{S}^n} \left[(n-1)\phi^{n-3}(\rho)\phi'(\rho) g(\Phi(\rho)) + \phi^{n-1}(\rho) g'(\Phi(\rho))\right] \rho^2 |\nabla u_1|^2 dA \\&- \int_{\mathbb{S}^n} n\left[(n-1)\phi^{n-3}(\rho) {\phi'}(\rho) g(\Phi(\rho))+\phi^{n-1}(\rho) g'(\Phi(\rho))\right] \rho^2 u_1^2 dA \\&+ \int_{\mathbb{S}^n} \left[\frac{n-2}{2}\phi^{n-3}(\rho)\phi'(\rho) g(\Phi(\rho)) + \phi^{n-1}(\rho) g'(\Phi(\rho))\right] \rho^2 |\nabla u_2|^2 dA \\ &  - \int_{\mathbb{S}^n} n\left[(n-1)\phi^{n-3}(\rho) {\phi'}(\rho) g(\Phi(\rho))+\phi^{n-1}(\rho) g'(\Phi(\rho))\right] \rho^2 u_2^2 dA  \\&+ O(\varepsilon)\|u\|_{L^2\left(\mathbb{S}^n\right)}^2+O(\varepsilon)\|\nabla u\|_{L^2\left(\mathbb{S}^n\right)}^2.
		\end{aligned}
	\end{equation}
	
	Since $u_2$ belongs to the subspace generated by eigenfunctions of the Laplace-Beltrami operator with eigenvalues larger than $\lambda$, it holds
	\begin{equation}\label{u2 volume poin esti1}
		\|\nabla u_2\|_{L^2\left(\mathbb{S}^n\right)}^2 \geq \lambda \| u_2\|_{L^2\left(\mathbb{S}^n\right)}^2.
	\end{equation}
	In addition, since $u_1$ is the projection of $u$ onto low frequencies, it inherits the estimates of (\ref{volume a0}) and (\ref{a1}). Repeating the argument used to establish (\ref{volume poin esti}), we can also prove
	\begin{equation}\label{u1 volume poin esti1}
		\|\nabla u_1\|_{L^2\left(\mathbb{S}^n\right)}^2 \geq 2(n+1)\|u_1\|_{L^2\left(\mathbb{S}^n\right)}^2 + O(\varepsilon)\|u\|_{L^2\left(\mathbb{S}^n\right)}^2.
	\end{equation}
	
	Using the two Poincar\'{e}-type inequalities (\ref{u2 volume poin esti1}) and (\ref{u1 volume poin esti1}), the estimate (\ref{defict g H int}) becomes 
	\begin{equation}\label{defict g H int1}
			\begin{aligned}
			&\int_{M} g(\Phi) H^+ d\mu - \int_{\partial\bar{B}_{\rho}} g(\Phi) H d\mu \\\geq
			& \int_{\mathbb{S}^n} \frac{n+2}{2(n+1)}\left[(n-1)\phi^{n-3}(\rho)\phi'(\rho) g(\Phi(\rho)) + \phi^{n-1}(\rho) g'(\Phi(\rho))\right] \rho^2 |\nabla u_1|^2 dA \\&+ \int_{\mathbb{S}^n} \left[\left(\frac{n-2}{2}-\frac{n(n-1)}{\lambda}\right)\phi^{n-3}(\rho)\phi'(\rho) g(\Phi(\rho))\right. \\&\left.+ \left(1-\frac{n}{\lambda}\right)\phi^{n-1}(\rho) g'(\Phi(\rho))\right] \rho^2 |\nabla u_2|^2 dA \\&+ O(\varepsilon)\|u\|_{L^2\left(\mathbb{S}^n\right)}^2+O(\varepsilon)\|\nabla u\|_{L^2\left(\mathbb{S}^n\right)}^2.
		\end{aligned}
	\end{equation}
	Notice that $n\geq3$, if $\varepsilon$ is sufficiently small and $\lambda$ is sufficiently large, the right-hand side of the latter inequality is nonnegative and therefore the statement follows.
	
	If equality holds in (\ref{weight gH volume ineq}), then from (\ref{defict g H int1}) we deduce $\nabla u_1 \equiv 0$ and $\nabla u_2 \equiv 0$. Then the unit normal $\nu$ is parallel to the radial direction $\partial_r$, and hence $M$ is a geodesic sphere centered at the origin.
	
\end{proof}

\begin{proof}[Proof of Theorem \ref{gH weight volume thm}]
	Assume that $\int_{\Omega}\phi' d\mathrm{vol} = \int_{\bar{B}_{\rho}}\phi' d\mathrm{vol}$, then
	plugging (\ref{u weight volum}) into (\ref{gPhiH int}) and using ${\phi'}^2 + K\phi^2 =1$ we have
		\begin{align*}
			&\int_{M} g(\Phi) H d\mu - \int_{\partial\bar{B}_{\rho}} g(\Phi) H d\mu \\=
			& - \int_{\mathbb{S}^n} n\left[(n-1)\phi^{n-3}(\rho) {\phi'}(\rho) g(\Phi(\rho))-\frac{1}{2}\phi^{n+1}(\rho) {\phi'}(\rho) g''(\Phi(\rho))\right.\\&\left.+\left(\frac{K}{2}\phi^{n+1}(\rho) {\phi'}^{-1}(\rho) - \frac{n-1}{2}\phi^{n-1}(\rho) {\phi'}(\rho)\right)\left(g'(\Phi(\rho))\phi'(\rho)+Kg(\Phi(\rho))\right)\right] \rho^2 u^2 dA \\&+ \int_{\mathbb{S}^n} \left[(n-1)\phi^{n-3}(\rho)\phi'(\rho) g(\Phi(\rho)) + \phi^{n-1}(\rho) g'(\Phi(\rho))\right] \rho^2 |\nabla u|^2 dA \\ & + \int_{\mathbb{S}^n} \frac{\phi^{n-2} g(\Phi) \rho^3}{D^2} \nabla^2 u \left[\nabla u, \nabla u\right] dA + O(\varepsilon)\|u\|_{L^2\left(\mathbb{S}^n\right)}^2+O(\varepsilon)\|\nabla u\|_{L^2\left(\mathbb{S}^n\right)}^2.
		\end{align*}
	Notice that $\phi'=1+\Phi$ in $\mathbb{H}^{n+1}$. Recall the assumption that $g$ is a non-decreasing convex $C^3$ positive function which satisfies $(1+s)g'(s)-g(s)\geq 0$, we have
	$$
	g'(\Phi(\rho))\phi'(\rho)-g(\Phi(\rho))\geq0.
	$$
	Then
	\begin{equation}\label{gH int weight volume}
		\begin{aligned}
			&\int_{M} g(\Phi) H d\mu - \int_{\partial\bar{B}_{\rho}} g(\Phi) H d\mu \\\geq
			& - \int_{\mathbb{S}^n} n(n-1)\phi^{n-3}(\rho) {\phi'}(\rho) g(\Phi(\rho)) \rho^2 u^2 dA \\&+ \int_{\mathbb{S}^n} (n-1)\phi^{n-3}(\rho)\phi'(\rho) g(\Phi(\rho)) \rho^2 |\nabla u|^2 dA \\ & + \int_{\mathbb{S}^n} \frac{\phi^{n-2} g(\Phi) \rho^3}{D^2} \nabla^2 u \left[\nabla u, \nabla u\right] dA + O(\varepsilon)\|u\|_{L^2\left(\mathbb{S}^n\right)}^2+O(\varepsilon)\|\nabla u\|_{L^2\left(\mathbb{S}^n\right)}^2.
		\end{aligned}
	\end{equation}
	Joining (\ref{gH int weight volume}) with the estimate (\ref{estimate}), we obtain
		\begin{align}
			&\int_{M} g(\Phi) H^+ d\mu - \int_{\partial\bar{B}_{\rho}} g(\Phi) H d\mu \notag\\ \geq
			& \int_{\mathbb{S}^n} (n-1)\phi^{n-3}(\rho)\phi'(\rho) g(\Phi(\rho)) \rho^2 |\nabla u_1|^2 dA \notag\\&- \int_{\mathbb{S}^n} n(n-1)\phi^{n-3}(\rho) {\phi'}(\rho) g(\Phi(\rho)) \rho^2 u_1^2 dA \label{defict g H int11}\\&+ \int_{\mathbb{S}^n} \frac{n-2}{2}\phi^{n-3}(\rho)\phi'(\rho) g(\Phi(\rho)) \rho^2 |\nabla u_2|^2 dA \notag \\ &  - \int_{\mathbb{S}^n} n(n-1)\phi^{n-3}(\rho) {\phi'}(\rho) g(\Phi(\rho)) \rho^2 u_2^2 dA \notag \\&+ O(\varepsilon)\|u\|_{L^2\left(\mathbb{S}^n\right)}^2+O(\varepsilon)\|\nabla u\|_{L^2\left(\mathbb{S}^n\right)}^2.\notag
		\end{align}
	Repeating the argument used to establish (\ref{weight volume poin esti}), we can also prove
	\begin{equation}\label{u1 weight volume poin esti1}
		\|\nabla u_1\|_{L^2\left(\mathbb{S}^n\right)}^2 \geq 2(n+1)\|u_1\|_{L^2\left(\mathbb{S}^n\right)}^2 + O(\varepsilon)\|u\|_{L^2\left(\mathbb{S}^n\right)}^2.
	\end{equation}
	Using the two Poincar\'{e}-type inequalities (\ref{u2 volume poin esti1}) and (\ref{u1 weight volume poin esti1}), the estimate (\ref{defict g H int11}) becomes 
	\begin{equation*}
		\begin{aligned}
			&\int_{M} g(\Phi) H^+ d\mu - \int_{\partial\bar{B}_{\rho}} g(\Phi) H d\mu \\\geq
			& \int_{\mathbb{S}^n} \frac{(n-1)(n+2)}{2(n+1)}\phi^{n-3}(\rho)\phi'(\rho) g(\Phi(\rho)) \rho^2 |\nabla u_1|^2 dA \\&+ \int_{\mathbb{S}^n} \left(\frac{n-2}{2}-\frac{n(n-1)}{\lambda}\right)\phi^{n-3}(\rho)\phi'(\rho) g(\Phi(\rho)) \rho^2 |\nabla u_2|^2 dA \\&+ O(\varepsilon)\|u\|_{L^2\left(\mathbb{S}^n\right)}^2+O(\varepsilon)\|\nabla u\|_{L^2\left(\mathbb{S}^n\right)}^2.
		\end{aligned}
	\end{equation*}
	
	Notice that $n\geq3$, if $\varepsilon$ is sufficiently small and $\lambda$ is sufficiently large, the right-hand side of the latter inequality is nonnegative and therefore the statement follows. We finish the proof by examining the equality  case similarly to Theorem \ref{gH volume thm}.
	
\end{proof}

\section{Stability of Weighted Quermassintegral inequalities in $\mathbb{H}^{n+1}$}\label{sec4}
In this section, we study the validity and the stability of geometric inequalities involving weighted curvature integrals and quermassintegrals for nearly spherical sets in $\mathbb{H}^{n+1}$. Based on the explicit formula for the weighted $k$th mean curvature integral established in Lemma \ref{lem expression g sigmak} and combining with the Poincar\'{e}-type estimate, we derive our main results.

\begin{proof}[Proof of Theorem \ref{gsigmak weight volume thm}]
	Assume that $\int_{\Omega}\phi' dv = \int_{\bar{B}_{\rho}}\phi' dv$, then
	plugging (\ref{u weight volum}) into (\ref{expression g sigmak}) and using ${\phi'}^2 + K\phi^2 =1$ we have
	\begin{align*}
		&\int_{M} g(\Phi)\sigma_{k}(\kappa) d\mu - \int_{\partial \bar{B}_{\rho}} g(\Phi)\sigma_{k}(\kappa) d\mu\\ 
			=& \int_{\mathbb{S}^n} \binom{n}{k} \left\{\left[-\frac{(n-k)(k+1)}{2} \phi^{n-k-2}(\rho) {\phi'}^{k}(\rho) + K\frac{k(k-2)}{2} \phi^{n-k}(\rho) {\phi'}^{k-2}(\rho)\right] g(\Phi(\rho))\right. \\&\left.- \left(k-\frac{1}{2}\right) \phi^{n-k}(\rho) {\phi'}^{k-1}(\rho) g'(\Phi(\rho)) + \frac{n}{2} \phi^{n-k}(\rho) {\phi'}^{k}(\rho) (\phi'(\rho)g'(\Phi(\rho))+Kg(\Phi(\rho)))\right. \\&\left. + \frac{1}{2} \phi^{n-k+2}(\rho) {\phi'}^{k}(\rho) g''(\Phi(\rho))\right\} \rho^2 u^2 dA \\
			&+ \int_{\mathbb{S}^n} \binom{n}{k} \left\{\left[\frac{(n-k)(k+1)}{2n} \phi^{n-k-2}(\rho) {\phi'}^{k}(\rho) - K \frac{k(k-1)}{2n} \phi^{n-k}(\rho) {\phi'}^{k-2}(\rho)\right] g(\Phi(\rho))\right. \\&\left.
			+ \frac{k}{n} \phi^{n-k}(\rho) {\phi'}^{k-1}(\rho) g'(\Phi(\rho))\right\} \rho^2 |\nabla u|^2 dA \\
			&+O(\varepsilon)\|u\|_{L^2\left(\mathbb{S}^n\right)}^2+O(\varepsilon)\|\nabla u\|_{L^2\left(\mathbb{S}^n\right)}^2.
	\end{align*}
	Recall the assumption that $g$ is a non-decreasing convex $C^3$ positive function which satisfies $(1+s)g'(s)-g(s)\geq 0$, we have
	\begin{equation}\label{gsigmak int weight volume}
		\begin{aligned}
			&\int_{M} g(\Phi)\sigma_{k}(\kappa) d\mu - \int_{\partial \bar{B}_{\rho}} g(\Phi)\sigma_{k}(\kappa) d\mu\\ 
			\geq& \int_{\mathbb{S}^n} \binom{n}{k} \left\{\left[-\frac{(n-k)(k+1)}{2} \phi^{n-k-2}(\rho) {\phi'}^{k}(\rho) + K\frac{k(k-2)}{2} \phi^{n-k}(\rho) {\phi'}^{k-2}(\rho)\right] g(\Phi(\rho))\right. \\&\left.- \left(k-\frac{1}{2}\right) \phi^{n-k}(\rho) {\phi'}^{k-1}(\rho) g'(\Phi(\rho))\right\} \rho^2 u^2 dA \\
			&+ \int_{\mathbb{S}^n} \binom{n}{k} \left\{\left[\frac{(n-k)(k+1)}{2n} \phi^{n-k-2}(\rho) {\phi'}^{k}(\rho) - K \frac{k(k-1)}{2n} \phi^{n-k}(\rho) {\phi'}^{k-2}(\rho)\right] g(\Phi(\rho))\right. \\&\left.
			+ \frac{k}{n} \phi^{n-k}(\rho) {\phi'}^{k-1}(\rho) g'(\Phi(\rho))\right\} \rho^2 |\nabla u|^2 dA \\
			&+O(\varepsilon)\|u\|_{L^2\left(\mathbb{S}^n\right)}^2+O(\varepsilon)\|\nabla u\|_{L^2\left(\mathbb{S}^n\right)}^2.
		\end{aligned}
	\end{equation}
	We notice that the coefficient of $\int_{\mathbb{S}^n}|\nabla u|^2 \mathrm{d} A$ in (\ref{gsigmak int weight volume}) is positive when $K=-1$. Indeed, by using the condition $(1+s)g'(s)-g(s)\geq 0$ and the fact $\phi'(\rho)=1+\Phi(\rho)$, we have
		\begin{align*}
			&\left[\frac{(n-k)(k+1)}{2n} \phi^{n-k-2}(\rho) {\phi'}^{k}(\rho) + \frac{k(k-1)}{2n} \phi^{n-k}(\rho) {\phi'}^{k-2}(\rho)\right] g(\Phi(\rho))
			\\&+ \frac{k}{n} \phi^{n-k}(\rho) {\phi'}^{k-1}(\rho) g'(\Phi(\rho))\\
			\geq&\left[\frac{(n-k)(k+1)}{2n} \phi^{n-k-2}(\rho) {\phi'}^{k}(\rho) + \frac{(k+1)k}{2n} \phi^{n-k}(\rho) {\phi'}^{k-2}(\rho)\right] g(\Phi(\rho))>0.
		\end{align*}
	Therefore, using the Poincar\'{e}-type estimate (\ref{weight volume poin esti}), the estimate (\ref{gsigmak int weight volume}) becomes
	\begin{equation}\label{defict g sigmk weight volume}
		\begin{aligned}
			&\int_{M} g(\Phi)\sigma_{k}(\kappa) d\mu - \int_{\partial \bar{B}_{\rho}} g(\Phi)\sigma_{k}(\kappa) d\mu\\ 
			\geq
			&\frac{1}{2} \binom{n}{k} \left\{\left[\frac{(n-k)(k+1)}{2n} \phi^{n-k-2}(\rho) {\phi'}^{k}(\rho)+ \frac{k(k-1)}{2n} \phi^{n-k}(\rho) {\phi'}^{k-2}(\rho)\right] g(\Phi(\rho))\right. \\&\left.
			+ \frac{k}{n} \phi^{n-k}(\rho) {\phi'}^{k-1}(\rho) g'(\Phi(\rho))\right\} \rho^2 \|\nabla u\|_{L^2\left(\mathbb{S}^n\right)}^2
			\geq 0.
		\end{aligned}
	\end{equation}
	We obtain the inequality (\ref{g sigmak weight volume ineq}) as desired.
	
	If equality holds in (\ref{g sigmak weight volume ineq}), then from (\ref{defict g sigmk weight volume}) we deduce $\nabla u \equiv 0$. Then the unit normal $\nu$ is parallel to the radial direction $\partial_r$, and hence $M$ is a geodesic sphere centered at the origin.
	
\end{proof}

\begin{proof}[Proof of Theorem \ref{gsgmk quermassintegral thm}]
	From $W_{-1}(\Omega)=W_{-1}(\bar{B}_{\rho})$, we have
	\begin{equation}\label{volume condition11}
		\int_{\mathbb{S}^n} u dA=-\frac{n}{2} \frac{\phi'(\rho)}{\phi(\rho)} \rho \int_{\mathbb{S}^n} u^2 dA +O(\varepsilon)\|u\|_{L^2\left(\mathbb{S}^n\right)}^2,
	\end{equation}
	and from $W_j(\Omega)=W_j(\bar{B}_{\rho})(0 \leq j<k)$, we have
	\begin{equation}\label{general condition11}
		\begin{aligned}
			\int_{\mathbb{S}^n} u dA= & -\int_{\mathbb{S}^n} \left[\frac{n-j-1}{2} \frac{\phi'(\rho)}{\phi(\rho)} - K\frac{j+1}{2} \frac{\phi(\rho)}{\phi'(\rho)}\right] \rho u^2 dA \\& - \int_{\mathbb{S}^n} \frac{j+1}{2 n} \frac{1}{\phi(\rho)\phi'(\rho)} \rho |\nabla u|^2 dA+O(\varepsilon)\|u\|_{L^2\left(\mathbb{S}^n\right)}^2 \\
			& +O(\varepsilon)\|\nabla u\|_{L^2\left(\mathbb{S}^n\right)}^2.
		\end{aligned}
	\end{equation}
	These are proved in \cite[Lemma 4.2]{ZZ23}. Then by substituting (\ref{volume condition11}) in (\ref{expression g sigmak}) when $j=-1$ and substituting (\ref{general condition11}) in (\ref{expression g sigmak}) when $0 \leq j<k$, using ${\phi'}^2 + K\phi^2 =1$ we have
		\begin{align*}
			&\int_{M} g(\Phi)\sigma_{k}(\kappa) d\mu - \int_{\partial \bar{B}_{\rho}} g(\Phi)\sigma_{k}(\kappa) d\mu\\ 
			=&\int_{\mathbb{S}^n} \binom{n}{k} \left\{\left[\frac{(n-k)(j-k)}{2} \phi^{n-k-2}(\rho) {\phi'}^{k}(\rho) + K\frac{k(k-j-2)}{2} \phi^{n-k}(\rho) {\phi'}^{k-2}(\rho)\right] g(\Phi(\rho))\right. \\&\left.+ \frac{n+1}{2} \phi^{n-k}(\rho) {\phi'}^{k+1}(\rho) g'(\Phi(\rho)) + \left(\frac{j+1}{2} - k\right) \phi^{n-k}(\rho) {\phi'}^{k-1}(\rho) g'(\Phi(\rho))\right. \\&\left. + \frac{1}{2} \phi^{n-k+2}(\rho) {\phi'}^{k}(\rho) g''(\Phi(\rho))\right\} \rho^2 u^2 dA \\
			&+ \int_{\mathbb{S}^n} \binom{n}{k} \left\{\left[\frac{(n-k)(k-j)}{2n} \phi^{n-k-2}(\rho) {\phi'}^{k}(\rho)\right.\right. \\&\left.\left.- K \frac{k(k-j-2)}{2n} \phi^{n-k}(\rho) {\phi'}^{k-2}(\rho)\right] g(\Phi(\rho))\right. \\&\left.
			+ \frac{2k-j-1}{2n} \phi^{n-k}(\rho) {\phi'}^{k-1}(\rho) g'(\Phi(\rho))\right\} \rho^2 |\nabla u|^2 dA \\
			&+O(\varepsilon)\|u\|_{L^2\left(\mathbb{S}^n\right)}^2+O(\varepsilon)\|\nabla u\|_{L^2\left(\mathbb{S}^n\right)}^2.
		\end{align*}
	Recall the assumption that $g$ is a non-decreasing convex $C^3$ positive function, we have 
		\begin{align}
			&\int_{M} g(\Phi)\sigma_{k}(\kappa) d\mu - \int_{\partial \bar{B}_{\rho}} g(\Phi)\sigma_{k}(\kappa) d\mu \notag\\ 
			\geq&\int_{\mathbb{S}^n} \binom{n}{k} \left\{\left[\frac{(n-k)(j-k)}{2} \phi^{n-k-2}(\rho) {\phi'}^{k}(\rho)\right.\right. \notag\\&\left.\left.+ K\frac{k(k-j-2)}{2} \phi^{n-k}(\rho) {\phi'}^{k-2}(\rho)\right] g(\Phi(\rho))\right. \notag\\&\left. + \frac{j+1-2k}{2} \phi^{n-k}(\rho) {\phi'}^{k-1}(\rho) g'(\Phi(\rho))\right\} \rho^2 u^2 dA \notag\\
			&+ \int_{\mathbb{S}^n} \binom{n}{k} \left\{\left[\frac{(n-k)(k-j)}{2n} \phi^{n-k-2}(\rho) {\phi'}^{k}(\rho)\right.\right. \label{defict g sigmak}\\&\left.\left.- K \frac{k(k-j-2)}{2n} \phi^{n-k}(\rho) {\phi'}^{k-2}(\rho)\right] g(\Phi(\rho))\right. \notag\\&\left.
			+ \frac{2k-j-1}{2n} \phi^{n-k}(\rho) {\phi'}^{k-1}(\rho) g'(\Phi(\rho))\right\} \rho^2 |\nabla u|^2 dA \notag\\
			&+O(\varepsilon)\|u\|_{L^2\left(\mathbb{S}^n\right)}^2+O(\varepsilon)\|\nabla u\|_{L^2\left(\mathbb{S}^n\right)}^2.\notag
		\end{align}
	We notice that the coefficient of $\int_{\mathbb{S}^n}|\nabla u|^2 \mathrm{d} A$ in (\ref{defict g sigmak}) is positive when $K=-1$. Indeed, by using the condition $(1+s)g'(s)-g(s)\geq 0$ and the fact $\phi'(\rho)=1+\Phi(\rho)$, we have
		\begin{align*}
			&\left[\frac{(n-k)(k-j)}{2n} \phi^{n-k-2}(\rho) {\phi'}^{k}(\rho) + \frac{k(k-j-2)}{2n} \phi^{n-k}(\rho) {\phi'}^{k-2}(\rho)\right] g(\Phi(\rho))
			\\&+ \frac{2k-j-1}{2n} \phi^{n-k}(\rho) {\phi'}^{k-1}(\rho) g'(\Phi(\rho))\\
			\geq&\left[\frac{(n-k)(k-j)}{2n} \phi^{n-k-2}(\rho) {\phi'}^{k}(\rho) + \frac{(k+1)(k-j-1)}{2n} \phi^{n-k}(\rho) {\phi'}^{k-2}(\rho)\right] g(\Phi(\rho))>0.
		\end{align*}
    Therefore, using the Poincar\'{e}-type estimate in \cite[Lemma 4.2]{ZZ23}, the estimate (\ref{defict g sigmak}) becomes
		\begin{align*}
			&\int_{M} g(\Phi)\sigma_{k}(\kappa) d\mu - \int_{\partial \bar{B}_{\rho}} g(\Phi)\sigma_{k}(\kappa) d\mu\\ 
			\geq
			&\frac{1}{2} \binom{n}{k} \left\{\left[\frac{(n-k)(k-j)}{2n} \phi^{n-k-2}(\rho) {\phi'}^{k}(\rho)+ \frac{k(k-j-2)}{2n} \phi^{n-k}(\rho) {\phi'}^{k-2}(\rho)\right] g(\Phi(\rho))\right. \\&\left.
			+ \frac{2k-j-1}{2n} \phi^{n-k}(\rho) {\phi'}^{k-1}(\rho) g'(\Phi(\rho))\right\} \rho^2 \|\nabla u\|_{L^2\left(\mathbb{S}^n\right)}^2\\
			\geq& \binom{n}{k}\frac{(n-k)(k-j)}{4n} \phi^{n-k-2}(\rho) {\phi'}^{k}(\rho)g(\Phi(\rho)) \rho^2 \|\nabla u\|_{L^2\left(\mathbb{S}^n\right)}^2.
		\end{align*}
	Combining the estimate proved in \cite[Lemma 4.3]{ZZ23}:
	$$
	{\alpha}^2(\Omega) \leq \frac{1}{n^2} \operatorname{Area}\left(\mathbb{S}^n\right) \phi^{2n}(\rho) \rho^2 \|\nabla u\|_{L^2\left(\mathbb{S}^n\right)}^2+O(\varepsilon)\|\nabla u\|_{L^2\left(\mathbb{S}^n\right)}^2,
	$$
	we obtain the inequality 
	\begin{equation*}
		\begin{aligned}
			&\int_{M} g(\Phi)\sigma_{k}(\kappa) d\mu - \int_{\partial \bar{B}_{\rho}} g(\Phi)\sigma_{k}(\kappa) d\mu\\ 
			\geq& \left(\binom{n}{k}\frac{n(n-k)(k-j)}{4} \frac{ {\phi'}^{k}(\rho)g(\Phi(\rho))}{\operatorname{Area}\left(\mathbb{S}^n\right)\phi^{n+k+2}(\rho)}+O(\varepsilon)\right) \alpha^2 (\Omega)
		\end{aligned}
	\end{equation*}
	as desired.
	
\end{proof}

\section{Stability of Weighted Quermassintegral inequalities in $\mathbb{R}^{n+1}$}\label{sec5}
Finally, we establish quantitative weighted quermassintegral inequalities in $\mathbb{R}^{n+1}$. The proof of Theorem \ref{thm gsk rn} follows a similar method to that of Theorem \ref{gsgmk quermassintegral thm} in hyperbolic space, though slight differences arise in the geometric analysis due to the vanishing curvature of the Euclidean setting. 

\begin{proof}[Proof of Theorem \ref{thm gsk rn}]
	Taking $K=0$, $\phi(r)=r$ in (\ref{expression g sigmak}), we get that for any nearly spherical set $M=\{(\rho(1+u(x)), x): x \in \mathbb{S}^n\}$ in $\mathbb{R}^{n+1}$,
	\begin{align}
		&\int_{M} g(\Phi)\sigma_{k}(\kappa) d\mu \notag\\ 
		=&\int_{\mathbb{S}^n} \binom{n}{k} \rho^{n-k} g(\Phi(\rho)) dA \notag\\
		&+\int_{\mathbb{S}^n} \binom{n}{k} \left\{(n-k)\rho^{n-k-1} g(\Phi(\rho))+ \rho^{n-k+1} g'(\Phi(\rho))\right\} \rho u dA \notag\\
		&+ \int_{\mathbb{S}^n} \binom{n}{k} \left\{\frac{(n-k)(n-k-1)}{2} \rho^{n-k-2} g(\Phi(\rho))\right. \label{expression g sigmak rn}\\&\left.+ \left(n-k+\frac{1}{2}\right) \rho^{n-k} g'(\Phi(\rho)) + \frac{1}{2} \rho^{n-k+2} g''(\Phi(\rho))\right\} \rho^2 u^2 dA \notag\\
		&+ \int_{\mathbb{S}^n} \binom{n}{k} \left\{\frac{(n-k)(k+1)}{2n} \rho^{n-k-2} g(\Phi(\rho))
		+ \frac{k}{n} \rho^{n-k} g'(\Phi(\rho))\right\} \rho^2 |\nabla u|^2 dA \notag\\
		&+O(\varepsilon)\|u\|_{L^2\left(\mathbb{S}^n\right)}^2+O(\varepsilon)\|\nabla u\|_{L^2\left(\mathbb{S}^n\right)}^2.\notag
	\end{align}
	From $W_{-1}(\Omega)=W_{-1}(\bar{B}_{\rho})$, we have
	\begin{equation}\label{volume condition1}
		\int_{\mathbb{S}^n} u d A=-\int_{\mathbb{S}^n} \frac{n}{2} u^2 d A+O(\varepsilon)\|u\|_{L^2\left(\mathbb{S}^n\right)}^2,
	\end{equation}
	and from $W_j(\Omega)=W_j(\bar{B}_{\rho})(0 \leq j<k)$, we have
	\begin{equation}\label{general condition1}
		\begin{aligned}
			\int_{\mathbb{S}^n} u d A=- & \int_{\mathbb{S}^n} \frac{n-j-1}{2} u^2 dA-\int_{\mathbb{S}^n} \frac{j+1}{2 n}|\nabla u|^2 d A+O(\varepsilon)\|u\|_{L^2\left(\mathbb{S}^n\right)}^2 \\
			& +O(\varepsilon)\|\nabla u\|_{L^2\left(\mathbb{S}^n\right)}^2.
		\end{aligned}
	\end{equation}
	These are proved in Proposition 4.3 and Lemma 5.2 of \cite{VW24}. Then by substituting (\ref{volume condition1}) in (\ref{expression g sigmak rn}) when $j=-1$ and substituting (\ref{general condition1}) in (\ref{expression g sigmak rn}) when $0 \leq j<k$,
		\begin{align*}
			&\int_M g(\Phi) \sigma_k(\kappa) d \mu -\int_{\partial\bar{B}_{\rho}} g(\Phi)\sigma_{k}(\kappa)d\mu\\
			=&\int_{\mathbb{S}^n}\binom{n}{k}\left\{\frac{(n-k)(j-k)}{2}\rho^{n-k+2}g(\Phi(\rho))+\frac{n-2k+j+2}{2}\rho^{n-k}g'(\Phi(\rho))\right. \\ &\left.+\frac{1}{2}\rho^{n-k+2}g''(\Phi(\rho))\right\}\rho^{2}u^{2}\\
			&+\int_{\mathbb{S}^n}\binom{n}{k}\left\{\frac{(n-k)(k-j)}{2n}\rho^{n-k-2}g(\Phi(\rho))+\frac{2k-j-1}{2n}\rho^{n-k}g'(\Phi(\rho))\right\}\rho^{2} |\nabla u|^{2}.
		\end{align*}
	Recall the assumption that $g$ is a non-decreasing convex $C^3$ positive function, we have 
	\begin{equation}\label{coefficient1}
		\begin{aligned}
			&\int_M g(\Phi) \sigma_k(\kappa) d \mu -\int_{\partial\bar{B}_{\rho}} g(\Phi)\sigma_{k}(\kappa)d\mu\\
			\geq&\int_{\mathbb{S}^n}\binom{n}{k}\left\{\frac{(n-k)(j-k)}{2}\rho^{n-k+2}g(\Phi(\rho))+\frac{n-2k+j+2}{2}\rho^{n-k}g'(\Phi(\rho))\right\}\rho^{2}u^{2}\\
			&+\int_{\mathbb{S}^n}\binom{n}{k}\left\{\frac{(n-k)(k-j)}{2n}\rho^{n-k-2}g(\Phi(\rho))+\frac{2k-j-1}{2n}\rho^{n-k}g'(\Phi(\rho))\right\}\rho^{2} |\nabla u|^{2}.
		\end{aligned}
	\end{equation}
	We remark that the coefficient of $\int_{\mathbb{S}^n}|\nabla u|^2 \mathrm{d} A$ in (\ref{coefficient1}) is positive. Therefore, using the Poincar\'{e}-type estimate in \cite[Lemma 5.2]{VW24}, 
	$$
	\|\nabla u\|_{L^2\left(\mathbb{S}^n\right)}^2 \geqslant 2(n+1)\|u\|_{L^2\left(\mathbb{S}^n\right)}^2+O(\varepsilon)\|u\|_{L^2\left(\mathbb{S}^n\right)}^2
	$$
	(when $j=-1$ ) and
	$$
	\|\nabla u\|_{L^2\left(\mathbb{S}^n\right)}^2 \geqslant 2(n+1)\|u\|_{L^2\left(\mathbb{S}^n\right)}^2+O(\varepsilon)\|u\|_{L^2\left(\mathbb{S}^n\right)}^2+O(\varepsilon)\|\nabla u\|_{L^2\left(\mathbb{S}^n\right)}^2
	$$
	(when $0 \leqslant j<k$ ), the estimate (\ref{coefficient1}) becomes
	\begin{align*}
		&\int_{M} g(\Phi)\sigma_{k}(\kappa) d\mu - \int_{\partial\bar{B}_{\rho}} g(\Phi)\sigma_{k}(\kappa)d\mu\\ 
		\geq
		&\frac{1}{2} \binom{n}{k} \left\{\frac{(n-k)(k-j)}{2n} \rho^{n-k-2} g(\Phi(\rho))
		+ \frac{2k-j-1}{2n} \rho^{n-k} g'(\Phi(\rho))\right\} \rho^2 \|\nabla u\|_{L^2\left(\mathbb{S}^n\right)}^2\\
		=& \binom{n}{k}\frac{(n-k)(k-j)\rho^{n-k} g(\Phi(\rho))+(2k-j-1)\rho^{n-k+2} g'(\Phi(\rho))}{4n}\|\nabla u\|_{L^2\left(\mathbb{S}^n\right)}^2.
	\end{align*}
	Combining the estimate proved in \cite[Lemma 5.3]{VW24}:
	$$
	{\alpha}^2(\Omega) \leqslant \frac{1}{n^2}\operatorname{Area}\left(\mathbb{S}^n\right) \rho^{2n+2} \|\nabla u\|_{L^2\left(\mathbb{S}^n\right)}^2+O(\varepsilon)\|\nabla u\|_{L^2\left(\mathbb{S}^n\right)}^2,
	$$
	we obtain the inequality 
	\begin{equation*}
		\begin{aligned}
			&\int_{M} g(\Phi)\sigma_{k}(\kappa) d\mu - \int_{\partial\bar{B}_{\rho}} g(\Phi)\sigma_{k}(\kappa)d\mu\\ 
			\geq& \left[\binom{n}{k}\frac{n((n-k)(k-j)g(\Phi(\rho))+(2k-j-1)\rho^{2}g'(\Phi(\rho)))}{4\operatorname{Area}(\mathbb{S}^n) \rho^{n+k+2}}+O(\varepsilon)\right] \alpha^2 (\Omega)
		\end{aligned}
	\end{equation*}
	as desired.
	
\end{proof}

\vspace{5mm}

\end{document}